\documentclass{amsart}

\usepackage{mathtools}
\usepackage{amsfonts}
\usepackage{amssymb}
\usepackage[latin2]{inputenc}
\usepackage{enumerate}
\usepackage{amsmath}
\usepackage{amssymb}
\usepackage{framed,xcolor}
\usepackage{lipsum,hyperref}

\numberwithin{equation}{section}
\newtheorem{theorem}{Theorem}[section]
\newtheorem{lemma}[theorem]{Lemma}
\newtheorem{claim}[theorem]{Claim}

\newtheorem{prop}[theorem]{Proposition}

\newtheorem{question}[theorem]{Question}
\newtheorem{problem}[theorem]{Problem}
\newtheorem{cor}[theorem]{Corollary}

\theoremstyle{definition}             
\newtheorem{remark}[theorem]{Remark}

\newtheorem{example}[theorem]{Example}

\newtheorem{defi}[theorem]{Definition}
\newtheorem{notation}[theorem]{Notation}

\DeclareMathOperator{\dom}{dom}
\DeclareMathOperator{\ubdim}{\overline{dim}_B}
\DeclareMathOperator{\lbdim}{\underline{dim}_B}

\DeclareMathOperator{\hdim}{\dim_H}
\DeclareMathOperator{\PhiMBdim}{\overline{\dim}_{M}^{\Phi}}

\DeclareMathOperator{\PhiBdim}{\overline{\dim}^{\Phi}}

\DeclareMathOperator{\bdim}{\dim_B}
\DeclareMathOperator{\pdim}{\dim_P}
\DeclareMathOperator{\lpdim}{\underline{\dim}_P}
\DeclareMathOperator{\adim}{\dim_A}
\DeclareMathOperator{\ldim}{\dim_L}
\DeclareMathOperator{\mldim}{\dim_{ML}}
\DeclareMathOperator{\drdim}{\dim^D_H}
\DeclareMathOperator{\drtdim}{\dim^{\widetilde{D}}_H}
\DeclareMathOperator{\rdim}{\dim_U}
\DeclareMathOperator{\diam}{diam}

\newcommand{\su}{\subset}

\newcommand{\II}{\mathbb{I}}
\newcommand{\N}{\mathbb{N}}
\newcommand{\Z}{\mathbb{Z}}
\newcommand{\Q}{\mathbb{Q}}
\newcommand{\R}{\mathbb{R}}
\newcommand{\eps}{\varepsilon}

\newcommand{\NN}{\mathbb{N}}
\newcommand{\QQ}{\mathbb{Q}}
\newcommand{\RR}{\mathbb{R}}

\newcommand{\iD}{\mathcal{D}}

\newcommand{\iH}{\mathcal{H}}

\newcommand{\iK}{\mathcal{K}}

\newcommand{\iR}{\mathcal{R}}
\newcommand{\iF}{\mathcal{F}}

\newcommand{\iS}{\mathcal{S}}

\newcommand{\sm}{\setminus}

\title[New Hausdorff-type dimensions]{New Hausdorff-type dimensions and optimal bounds for bilipschitz invariant dimensions}
\author{Rich\'ard Balka} 
\address{HUN-REN Alfr\'ed R\'enyi Institute of Mathematics, Re\'altanoda u.~13--15, H-1053 Budapest, Hungary, AND Institute of Mathematics and Informatics, Eszterh\'azy K\'aroly Catholic University, Le\'anyka u.~4, H-3300 Eger, Hungary}
\email{balkaricsi@gmail.com}

\thanks{The authors were supported by the National Research, Development and Innovation Office -- NKFIH, grants nos.~124749 and 146922. The first author was also supported by the J\'anos Bolyai Research Scholarship of the Hungarian Academy of Sciences.
The second author was also supported by the
National Research, Development and Innovation Office -- NKFIH, grant no.~129335.}
\author{Tam\'as Keleti}
\address{Institute of Mathematics, ELTE E\"otv\"os Lor\'and University, P\'azm\'any P\'eter s\'et\'any 1/c, H-1117 Budapest, Hungary}
\email{tamas.keleti@gmail.com}

\subjclass[2020]{28A78, 28A80, 51F30}

\keywords{Lipschitz map, bilipschitz equivalence, H\"older map, self-similar set, strong separation condition, Hausdorff dimension, box dimensions, Assouad dimension, lower dimension, modified lower dimension, packing dimension, lower packing dimension, intermediate dimensions}

\begin{document}

\begin{abstract} 
We introduce a new family of fractal dimensions by restricting the set of diameters in the coverings in the usual definition of the Hausdorff dimension. Among others, we prove that this family contains continuum many distinct dimensions, and they share most of the properties of the Hausdorff dimension, which answers negatively a question of Fraser. On the other hand, we also prove that among these new dimensions only the Hausdorff dimension behaves nicely with respect to H\"older functions. 

We also consider the supremum of these new dimensions, which turns out to be another interesting notion of fractal dimension.

We prove that among those bilipschitz invariant, monotone dimensions on the compact subsets of $\R^n$ that agree with the similarity dimension for the simplest self-similar sets, the modified lower dimension is the smallest and when $n=1$ the Assouad dimension is the greatest, and this latter statement is false for $n>1$. This answers a question of Rutar.
\end{abstract}

\maketitle

\section{Introduction} 
A huge variety of dimensions are widely used in fractal geometry and its applications, the most classical ones are the Hausdorff dimension, the upper and lower box dimensions, and the packing dimension. 
Recently, the Assouad dimension has become popular, and also its dual, the lower dimension, see for example Jonathan Fraser's monograph \cite{F}.
More recently, continuum many new fractal dimensions were added to this list by Falconer, Fraser, and Kempton \cite{FFK}. They introduced the so-called intermediate dimensions, which were further generalized by Amlan Banaji \cite{Ban}.
(See Section~\ref{s:prel} for the definitions and basic properties of the above notions.)

Most of these dimensions share some basic properties: 
\begin{enumerate}
\item\label{i:unity} 
agree with the similarity dimension on every self-similar set satisfying the strong separation condition,
\item \label{i:mon} 
monotonicity: $A\su B\Longrightarrow \dim A\le \dim B$,
\item \label{i:stable} 
finite stability: $\dim(A\cup B)=\max(\dim A, \dim B)$,
\item \label{i:bilip} bilipschitz invariance: $\dim A=\dim B$ if $A$ and $B$ are bilipschitz equivalent.
\end{enumerate}
Some of them also satisfy 
\begin{enumerate}\setcounter{enumi}{4}
\item \label{i:lip}
Lipschitz stability: $\dim f(A)\le \dim A$ if $f\colon \R^n\to \R^n$ is Lipschitz,
\item \label{i:sigma}
$\sigma$-stability: $\dim(\bigcup_{i=1}^\infty A_i)=\sup_i\dim A_i$.
\end{enumerate}

Those that are not $\sigma$-stable can be modified to force this property; in fact, this is a possible way to obtain the packing dimension from the upper box dimension.

Let us study the following inverse problem. 
\begin{problem}\label{p:inverse}
Suppose that we have a notion of dimension that has a given list of properties. What can we say about the dimension?  
\end{problem}

A less ambitious version of this problem is to find optimal lower and upper bounds for dimensions of given properties. The following results of this type was obtained in \cite{BK}: In $\R^n$ any dimension with properties \eqref{i:unity}, \eqref{i:mon} and \eqref{i:lip} is in between the Hausdorff dimension and the upper box dimension for all compact sets, and those with properties \eqref{i:unity}, \eqref{i:mon}, \eqref{i:lip}
and \eqref{i:sigma} are in between the Hausdorff and the packing dimensions. 
Note that the Assouad dimension can be greater than the upper box dimension, but it is not Lipschitz stable, it is only bilipschitz invariant.

Motivated by these, Alex Rutar asked what happens if Lipschitz invariance is replaced by bilipschitz invariance, and suggested that perhaps Assouad dimension $\adim$ is the greatest dimension with properties \eqref{i:unity}, \eqref{i:mon}, and \eqref{i:bilip}.
We settle this question and also find the least dimension with these properties,
which turns out to be the so-called modified lower dimension $\mldim$, see its definition in Section~\ref{s:prel}.

\begin{theorem}\label{t:introdimestimates}
  Let $n$ be a positive integer and let $D$ be a function from the family of compact subsets of $\R^n$ to $[0,n]$. Suppose that 
\begin{itemize}
\item[$(\star)$] \parbox{\linewidth}{$D$ is bilipschitz invariant, monotone, and it agrees with the similarity dimension for any homogeneous self-similar set with the SSC.}
\end{itemize} 
Then for every compact set $K\su \R^n$ we have \begin{equation*} 
\mldim(K)\le D(K).
\end{equation*}
If $n=1$ then for any compact $K$ we also have \begin{equation*} 
D(K)\le\adim K,
\end{equation*} 
but this latter statement is false when $n>1$.
\end{theorem}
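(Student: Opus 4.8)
The statement splits into three independent assertions, and I would treat them in turn.

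\emph{The bound $\mldim(K)\le D(K)$.} Since $\mldim(K)=\sup\{\ldim F: F\su K\}$, it is enough to produce, for every $F\su K$, bi-Lipschitz copies of suitable self-similar sets inside $K$ of dimension close to $\ldim F$ and then invoke monotonicity of $D$. Fix $s$ with $0<s<\min\{\ldim F,n\}$ and a small $\delta>0$. Using the defining inequality of the lower dimension repeatedly, I would build inside $F$ a Cantor-type set $C$: start from a small ball centred at a point of $F$; given a constructed ball $B(x,R)$ with $x\in F$, the lower dimension bound yields a $(\delta R)$-separated subset of $B(x,R)\cap F$ of cardinality $m\asymp\delta^{-s}$, and I keep $m$ of these points and pass to the balls of radius $\lambda R$ around them, with $\lambda\asymp\delta$. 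The resulting set $C\su\overline F\su K$ is, by construction, a Moran set with constant branching number $m$, constant ratio $\lambda$ and uniformly separated cylinders; hence $C$ is bi-Lipschitz equivalent to the homogeneous self-similar set $S\su\R^n$ given by $m$ similarities of ratio $\lambda$ whose images are well-separated balls inside the unit ball. Such an $S$ exists once $m\lambda^n\to 0$, which holds as $\delta\to0$ precisely because $s<n$, and then $\dim S=\log m/\log(1/\lambda)\to s$. By hypothesis~(*) and monotonicity, $D(K)\ge D(C)=D(S)=\dim S\ge s-\eps$ for $\delta$ small, so letting $\delta\to0$ and then $s\uparrow\ldim F$ gives $D(K)\ge\ldim F$; if $\ldim F=n$ the same argument gives $D(K)\ge s$ for all $s<n$, hence $D(K)=n$. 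In all cases $\mldim(K)\le D(K)$. (Incidentally this shows $\mldim$ itself satisfies~(*), so it is indeed the smallest such dimension.)

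\emph{The bound $D(K)\le\adim K$ for $n=1$.} If $\adim K=1$ there is nothing to prove, as $D$ takes values in $[0,1]$. So assume $a:=\adim K<1$, fix $\eps>0$, and pick $t\in(a,\min\{1,a+\eps\})$. A short computation from the Assouad inequality with exponent $<1$ shows that $K$ is uniformly porous: there is $\eta>0$ so that every interval $I$ meeting $K$ with $|I|$ below a fixed threshold contains a subinterval disjoint from $K$ of length $\ge 2\eta|I|$. I would then run a Moran construction adapted to $K$. Put $\lambda>0$ small (to be fixed) and $m:=\floor{\lambda^{-t}}$; let the root be the smallest closed interval containing $K$ (rescaled below the threshold), and for a node $I$, which I take to be the smallest closed interval containing $I\cap K$, declare its children to be the smallest closed intervals containing the $(\lambda|I|)$-connected components of $I\cap K$. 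Porosity with $\lambda<2\eta$ forces at least two children, each of length $\le(1-2\eta)|I|$, so diameters shrink along branches and the tree $\mathcal T$ codes $K$; the Assouad bound with exponent slightly above $a$ bounds the number of components, hence of children, by $m$ for $\lambda$ small; and distinct children are $(\lambda|I|)$-separated. Now fix a homogeneous self-similar set $S\su\R$ with $m$ maps of ratio $\lambda$ and the SSC (this exists since $m\lambda<1$), so $\dim S=\log m/\log(1/\lambda)\le a+\eps$ for $\lambda$ small, and embed $\mathcal T$ into the cylinder tree of $S$ by sending each node $I$ to a cylinder of diameter comparable to $\diam(I\cap K)$. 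The induced map $\phi\colon K\to S$ is a bi-Lipschitz embedding: if $x,y$ are separated at a node $I$ then $d(x,y)\asymp\diam(I\cap K)\asymp d(\phi x,\phi y)$. Hence $D(K)=D(\phi(K))\le D(S)=\dim S\le a+\eps$, and letting $\eps\to0$ finishes this part.

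\emph{Failure for $n>1$.} For $n\ge2$ define $D$ on compact $K\su\R^n$ by $D(K):=\adim K$ if $K$ contains no bi-Lipschitz copy of $[0,1]$, and $D(K):=n$ otherwise. This $D$ takes values in $[0,n]$, is monotone (Assouad dimension is, and containing a bi-Lipschitz copy of $[0,1]$ passes to supersets) and bi-Lipschitz invariant (both ingredients are). If $S\su\R^n$ is a homogeneous self-similar set with the SSC, then $\adim S=\dim S$, and $S$ is totally disconnected: two distinct points of $S$ lie in disjoint level cylinders, which are positively separated, giving a clopen separation; hence $S$ contains no bi-Lipschitz copy of $[0,1]$ and $D(S)=\dim S$. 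Thus $D$ satisfies~(*). But a line segment $K_0$ in $\R^n$ has $\adim K_0=1$ and contains a bi-Lipschitz copy of $[0,1]$, so $D(K_0)=n>1=\adim K_0$.

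\emph{Main obstacle.} The delicate step is the $n=1$ upper bound: organising the Moran construction so that the number of children stays below $\lambda^{-t}$ (this is where the Assouad exponent is used) while the child-to-parent length ratios remain controlled (this is where porosity is used), and then verifying that the induced coding map $K\to S$ is genuinely bi-Lipschitz, uniformly in the scale. The lower bound is a routine, if careful, nested-construction argument, and the $n>1$ counterexample is soft.
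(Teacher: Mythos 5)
Your first and third parts are sound. For the lower bound $\mldim(K)\le D(K)$ you build a uniform Moran subset of $F$ directly from the lower-dimension inequality and identify it bilipschitzly with a homogeneous SSC self-similar set via the coding map; this is a correct, self-contained alternative to the paper's route, which instead quotes Fraser's result that $\mldim K>s$ forces an $s$-Ahlfors--David regular compact subset and then the Mattila--Saaranen theorem to place a bilipschitz copy of a self-similar set inside it. Your argument is essentially a re-proof of the special case of those two results that is actually needed, at the cost of more bookkeeping (disjointness of the child balls inside the parent, the constant $C$ in the lower-dimension inequality, the degenerate case $\diam F=0$), all of which is routine. Your counterexample for $n>1$ is essentially the paper's: the paper uses ``$K$ totally disconnected'' where you use ``$K$ contains no bilipschitz copy of $[0,1]$''; both work for the same reason.

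The $n=1$ upper bound, however, has a genuine gap, and it sits exactly where you flag the ``main obstacle'': the existence of the tree embedding sending each node $I$ to a cylinder of $S$ of diameter comparable to $\diam(I\cap K)$, with uniform constants. The cluster tree of $K$ and the cylinder tree of $S$ are metrically incompatible. In $S$ every proper subcylinder of a cylinder $C$ has diameter at most $\lambda\diam C$, whereas a $(\lambda|I|)$-component of $I\cap K$ can have diameter as large as (roughly) $\lambda|I|/(2\eta)$, and as small as $0$. So a maximal child $I_i$ must be sent into a proper subcylinder of $C_I$ (it has siblings, which need disjoint cylinders), forcing $\diam C_{I_i}\le\lambda\diam C_I$, which already costs a factor $2\eta<1$ against the required lower bound $\diam C_{I_i}\ge c\,|I_i|$. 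Nothing in your construction prevents this loss from recurring at every node of a branch, and then the comparability constant degenerates like $(2\eta)^k$ along that branch, so the coding map is not bilipschitz with any uniform constant. The general $K$ with $\adim K<1$ is exactly the object for which this naive matching fails: it can have isolated points, and clusters of wildly different relative sizes at the same node, i.e.\ it lacks the \emph{lower} regularity that would let you control diameters from below. This is precisely why the paper proceeds in two steps: first enlarge $K$ to an Ahlfors--David $s$-regular compact set $L\supseteq K$ with $s\in(\adim K,t)$ (a dyadic construction that adds points to $K$ to equalize the branching), and then invoke the Mattila--Saaranen theorem that an $s$-regular set embeds bilipschitzly into a $t$-regular one for $0<s<t<1$ --- a nontrivial result whose proof matches \emph{measure} between the two trees rather than diameters. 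Your sketch in effect assumes the conclusion of that theorem without the regularization that makes it available. To repair your argument you would either have to carry out this regularization (after which you may as well cite Mattila--Saaranen), or give a genuinely different construction of the cylinder assignment together with a proof that the bilipschitz constants do not compound.
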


Using the notion of Lipschitz decomposability (see Definition~\ref{d:Ldec}) introduced recently by  K\"aenm\"aki and Rutar \cite{KR}
we give a condition, which is stronger than bilipschitz invariance but weaker than Lipschitz stability,
and we prove in Theorem~\ref{t:decomp} that if we replace bilipschitz invariance by this condition in $(\star)$ of the 
above theorem then we obtain Assouad dimension as the greatest dimension
in $\R^n$ for every $n$ with this modified list of properties.


The most ambitious and exciting problem of the type of Problem~\ref{p:inverse} was posed by Jonathan M.~Fraser:
\begin{problem}[Fraser]
Give a list of natural properties of dimensions that uniquely characterize the Hausdorff dimension.
\end{problem}
All the above properties (1)--(6) are satisfied by the Hausdorff dimension, but also by the packing dimension and by the $\sigma$-stable modifications of the (generalized) intermediate dimensions. However, the following property is known to be true only for the Hausdorff dimension, and was proved to be false by Mattila and Mauldin \cite{MM} for the packing dimension:
\begin{enumerate}\setcounter{enumi}{6}
\item \label{i:meas} measurability: $\dim\colon \iK(\R^n)\to [0,n]$ is a Borel function, where $\iK(\R^n)$ is the set of compact subsets of $\R^n$ endowed with the Hausdorff metric. 
\end{enumerate}

This led Fraser to pose the following question.

\begin{question}[Fraser]\label{q:Fraser}
If a function $\dim \colon\iK(\R^n)\to [0,n]$ satisfies all the above properties (1)--(7), then does it imply that this is the Hausdorff dimension?
\end{question}
We give a negative answer to this question by introducing a new family of continuum many distinct dimensions that satisfy all the above properties (1-7).

We obtain these dimensions in the same way as the Hausdorff dimension is defined, but in the covering we allow only sets with diameter from a fixed set $D\su(0,\infty)$, see the details in Definition~\ref{d:restdim}.
To get a reasonable notion, we require $\inf D=0$ to allow sets with arbitrarily small diameter. In addition, to guarantee that any set in the cover can be replaced by a slightly larger open set, we also require that for every $t\in D$, our set $D$ contains an interval of the form $[t,t+\delta)$, in other words, we assume that $D$ is Sorgenfrey open. We denote by $\drdim$ the dimension we obtain from a given such $D$ and we call it \emph{$D$-diameter restricted Hausdorff dimension}.
Note that in the $D=(0,\infty)$ case there is no restriction, so in this case $\drdim$ is the same as the Hausdorff dimension. 

We prove that we indeed obtain continuum many distinct dimensions by proving an interpolation result:
for any compact set defined by digit restrictions (see \cite{BP}) we show that its $D$-diameter restricted Hausdorff dimensions take every value between its Hausdorff dimension and packing dimension. For the generalized intermediate dimensions Banaji \cite{Ban} proved similar type of interpolation results for all compact subsets of $\R^n$.
We show that similar property cannot 
hold for the $D$-diameter restricted Hausdorff dimensions by constructing a compact set $K\subset \R$ such that the set of $D$-diameter restricted Hausdorff dimensions of $K$ is $\{0,1\}$.

Banaji suggested that perhaps Question~\ref{q:Fraser} can be saved by adding the following property, which is well known for the Hausdorff dimension:
\begin{enumerate}\setcounter{enumi}{7}
\item \label{i:holder}
H\"older-stability: 
$\dim f(A)\le \frac{1}{\alpha}\dim A$ for each $\alpha\in (0,1]$ and $\alpha$-H\"older map $f\colon \R^n\to \R^n$.
\end{enumerate}
Note that the implications $\eqref{i:holder}\Rightarrow \eqref{i:lip}\Rightarrow \eqref{i:bilip}$ and $\eqref{i:sigma}\Rightarrow \eqref{i:stable}$ clearly hold. Thus we obtain the following modification of Question~\ref{q:Fraser}:

\begin{question}[Banaji--Fraser]\label{q:BanajiFraser}
Assume that $\dim \colon \iK(\R^n)\to [0,n]$ is a function satisfying the following properties:
\begin{enumerate}[(i)] 
\item
agrees with the similarity dimension on each self-similar set satisfying the strong separation condition,
\item  
monotonicity: $A\su B\Longrightarrow \dim A\le \dim B$,
\item $\sigma$-stability: $\dim(\cup_{i=1}^\infty A_i)=\sup_i\dim A_i$,
\item measurability: $\dim\colon \iK(\R^n)\to [0,n]$ is a Borel function, 
\item H\"older-stability: 
$\dim f(A)\le \frac{1}{\alpha}\dim A$ for each $\alpha\in (0,1]$ and $\alpha$-H\"older map $f\colon \R^n\to \R^n$.
\end{enumerate}
Does it follow that $\dim$ is the Hausdorff dimension?
\end{question}

We show that for $D$-diameter restricted Hausdorff dimensions the answer is positive:
among them only those have the H\"older-stability that agree with the Hausdorff dimension.
We remark that Siqi Wang \cite{SW} has very recently given a negative answer to Question~\ref{q:BanajiFraser} by
combining different $D$-diameter restricted Hausdorff dimensions.

We also study the supremum of the $D$-diameter restricted Hausdorff dimensions, which we denote by $\rdim$ and call the  
\emph{universal restricted Hausdorff dimension}. We show that supremum cannot be replaced by maximum even for compact sets. 
We also show that this dimension has the above properties (1-6) but does not have property (7). 
We prove that $\rdim$ differs from all the above mentioned old, more recent, and new dimensions.
It also turns out that for $\sigma$-compact sets $\rdim$ is in between the lower packing dimension and the packing dimension.

The same way as for the Hausdorff dimension, one can easily show that for any scale set $D$, every set $E\su\R^n$ can be covered by a $G_\delta$ set of the same $D$-diameter restricted Hausdorff dimension. It turns out that this does not hold for $\rdim$. In fact, every set $E\su\R^n$ which is co-meager in 
a non-empty open set contains a compact set $K$ with $\rdim K=n$. In particular, each non-meager set $E\su\R^n$ with the Baire property satisfies $\rdim E=n$, and no dense set can be covered by a $G_\delta$-set of zero universal restricted Hausdorff dimension.

Our motivation to define these new dimensions was to answer Question~\ref{q:Fraser} but they can also have other applications. 
They can serve as useful invariants.
For example, a possible way to prove that two given sets are not bilipshitz invariant can be to construct a scale set $D$ for which the $D$-diameter restricted Hausdorff dimensions of the two sets differ.

\subsection*{Structure of the paper}
We recall some known definitions and facts in Section~\ref{s:prel}.
Section~\ref{s:dimrd} contains the definition of the $D$-diameter restricted Hausdorff dimension and its properties, including the negative answer to Questions~\ref{q:Fraser} of Fraser.
Section~\ref{s:urhd} is 
about the universal restricted Hausdorff dimension. Finally, in Section~\ref{s:bilipdim} we show the optimal bounds for bilipschitz invariant, reasonably nice dimensions. 
Section~\ref{s:bilipdim} is independent of Sections~\ref{s:dimrd} and \ref{s:urhd}.

\section{Preliminaries}\label{s:prel}

\subsection{Notation}
We denote the set of positive integers by $\N$. For a subset $E$ of a metric space let $|E|$ be its diameter. For a finite set $F$ let us denote by $\# F$ its cardinality.

\subsection{Classical dimensions}\label{ss:classical}
Let $(X,d)$ be a metric space. For $x\in X$ and $r>0$ we denote by $B(x,r)$ the closed ball of radius $r$ centered at $x$, and by $N(X,r)$ the minimal number
of closed balls of radius $r$ that cover $X$. We say that $S\subset X$ is a $\delta$-packing if $d(x,y)>\delta$ for any two distinct $x,y\in S$. For $n\in \NN$ define 
\begin{equation*}
M_{n}(X)=\max \{\#S: S\subset X \text{ is a $2^{-n}$-packing}\}. 
\end{equation*}
The \emph{upper} and \emph{lower box dimensions} of a bounded set $X\su\R^n$ are respectively defined as
\begin{equation*} 
\ubdim X=\limsup_{r \to 0+} \frac{\log N(X,r)}{\log (1/r)} \quad \text{and} \quad 
\lbdim X=\liminf_{r \to 0+} \frac{\log N(X,r)}{\log (1/r)}.
\end{equation*}
We obtain equivalent definitions as 
\begin{equation}\label{e:boxdimwithMn} 
\ubdim X=\limsup_{n \to \infty} \frac{\log M_n(X)}{n\log 2}
\quad \text{and} \quad 
\lbdim X=\liminf_{n \to \infty} \frac{\log M_n(X)}{n\log 2}.
\end{equation}
If $\ubdim X=\lbdim X$ then the common value $\bdim X$ is the \emph{box dimension} of $X$.
The \emph{packing} and \emph{lower packing dimensions} of a set $X\su\R^n$ can be defined as
the $\sigma$-stable modification of the upper and lower box dimensions, respectively:
\begin{align*}
    \pdim X&=\inf\left\{\sup_i \ubdim X_i \colon X=\bigcup_{i=1}^\infty X_i,\, X_i \textrm{ are bounded}\right\}, \\
\lpdim X&=\inf\left\{\sup_i \lbdim X_i \colon X=\bigcup_{i=1}^\infty X_i,\, X_i \textrm{ are bounded}\right\}.
\end{align*}
For the following standard lemma see \cite[Lemma~2.1]{BP2}. 
\begin{lemma} \label{l:lp} 
Let $X\subset \RR^n$. 
\begin{enumerate}[(i)]
\item \label{i:reg1} If $X$ is closed and any open set $V$ intersecting $X$ satisfies $\ubdim (X\cap V)\geq s$, then $\pdim X\geq s$.
\item \label{i:reg2} 
If $\pdim X>s$ then there exists a relatively closed subset $F\subset X$ such that $\pdim (F\cap V)>s$ for any open set $V$ intersecting $F$.  
\item \label{reg:i3} The analogues of both \eqref{i:reg1} and \eqref{i:reg2} hold with $\lbdim$ and $\lpdim$ in place of $\ubdim$ and $\pdim$, respectively. 
\end{enumerate}
\end{lemma}

For more on these dimensions and for the concepts of the \emph{Hausdorff dimension} $\hdim$ and the \emph{$s$-dimensional Hausdorff measure} $\iH^s$ see e.g.~the books \cite{Fa} and \cite{Ma}.

\subsection{Intermediate dimensions}\label{ss:intermediate}
The intermediate dimensions were introduced by Falconer, Fraser, and Kempton \cite{FFK}, and they are between the Hausdorff and the box dimensions. We will consider the more general notion of upper $\Phi$-intermediate dimensions introduced by Banaji \cite{Ban}, where we can get back the original (upper) intermediate dimensions with the special choice of functions $\Phi_{\theta}(\delta)=\delta^{1/\theta}$ with $0< \theta < 1$. 

A function $\Phi\colon (0,\infty)\to (0,\infty)$ is called \emph{admissible} if $\Phi$ is non-decreasing, we have $0<\Phi(\delta)\leq \delta$ for all $\delta>0$, and $\Phi(\delta)/\delta\to 0$ as $\delta\to 0+$. For a bounded set $X\subset \R^n$ we define the \emph{upper
$\Phi$-intermediate dimension} of $X$ by 
\begin{align*}
\PhiBdim X=\inf\Big\{& s\geq 0: \forall \eps>0~\exists \delta_0>0 \text{ such that } \forall \delta \in (0,\delta_0) \text{ there exists a cover}   \\
& \{U_i\}_{i\geq 1} \text{ of $X$ such that } \Phi(\delta)\leq |U_i|\leq \delta \text{ for all $i$ and } \textstyle{\sum_i} |U_i|^s\leq \eps\Big\}.
\end{align*}
Following Douzi and Selmi \cite{DS}, we define the \emph{modified upper $\Phi$-intermediate dimension} of 
a set $X\subset \RR^n$ by 
\begin{equation*} 
\PhiMBdim X=\inf\left\{ \sup_i \PhiBdim X_i: X=\bigcup_{i=1}^{\infty} X_i,\, X_i \text{ are bounded}\right\}. 
\end{equation*}

\begin{remark} If $\Phi$ satisfies $\frac{\log \delta}{\log \Phi(\delta)}\to 0$ as $\delta\to 0+$, then by \cite[Proposition~3.4]{Ban} we obtain that $\PhiBdim X=\ubdim X$ for all $X$, so in this case $\PhiMBdim$ is the packing dimension. 
\end{remark}

The following lemma easily follows from the Baire category theorem, for the argument see e.g.~\cite[Lemma~2.8.1\,(i)]{BP}.

\begin{lemma} \label{l:Baire} Let $\Phi$ be an admissible function and let $X\subset \RR^n$ be a closed set. If $\PhiBdim(X\cap U)\geq \alpha$ for any bounded open set $U$ with $X\cap U\neq \emptyset$, then $\PhiMBdim(X)\geq \alpha$.
\end{lemma}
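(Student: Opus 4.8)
The plan is to unwind the definition of $\PhiMBdim$ and reduce the statement to a single application of the Baire category theorem. By definition $\PhiMBdim(X)=\inf\{\sup_i \PhiBdim X_i : X=\cup_{i=1}^{\infty}X_i,\ X_i \text{ bounded}\}$, so it suffices to show that for \emph{every} countable cover $X=\cup_{i=1}^{\infty}X_i$ by bounded sets one has $\sup_i \PhiBdim X_i\ge\alpha$. Replacing each $X_i$ by the bounded set $\overline{X_i}\cap X$ does not destroy the covering (since $X_i\su\overline{X_i}\cap X$) and, because passing to closures does not change $\PhiBdim$ (see the last paragraph), it does not affect what must be proved; hence I may assume that each $X_i$ is a bounded closed subset of $X$.

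Since $X$ is closed in $\R^n$, it is complete in the induced metric, hence a Baire space (if $X=\emptyset$ there is nothing to prove). Writing the Baire space $X$ as the countable union of the closed sets $X_i$, some $X_{i_0}$ is not nowhere dense in $X$, i.e.\ it has non-empty interior relative to $X$. Intersecting with a small ball around an interior point, we obtain a non-empty bounded open set $U\su\R^n$ with $X\cap U\su X_{i_0}$. By the hypothesis of the lemma, $\PhiBdim(X\cap U)\ge\alpha$. Since $X\cap U\su X_{i_0}$ and $\PhiBdim$ is monotone (any admissible cover of a set is an admissible cover of each of its subsets), we get $\PhiBdim(X_{i_0})\ge\alpha$, whence $\sup_i \PhiBdim X_i\ge\alpha$. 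Taking the infimum over all countable bounded covers of $X$ yields $\PhiMBdim(X)\ge\alpha$, as desired.

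The only auxiliary facts used are the monotonicity of $\PhiBdim$ and the closure-invariance $\PhiBdim\overline{A}=\PhiBdim A$ for bounded $A$; the latter holds because any cover $\{U_i\}$ of $A$ with $\Phi(\delta)\le|U_i|\le\delta$ may be replaced by $\{\overline{U_i}\}$, which covers $\overline{A}$ with identical diameters, while the opposite inequality is monotonicity. I do not foresee a genuine obstacle here: the entire content is the Baire category step, and the only care required is the bookkeeping of which sets are closed and bounded, together with these two elementary properties of $\PhiBdim$.
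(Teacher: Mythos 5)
Your argument is correct and is exactly the standard Baire category proof that the paper refers to (it cites \cite[Lemma~2.8.1\,(i)]{BP} rather than writing it out): pass to the closed cover $\{\overline{X_i}\cap X\}$, apply Baire in the complete space $X$, and feed the resulting relatively open piece into the hypothesis. The one place where your justification is too quick is the closure-invariance $\PhiBdim\overline{A}=\PhiBdim A$: for an \emph{infinite} cover $\{U_i\}$ the union $\bigcup_i\overline{U_i}$ need not contain $\overline{\bigcup_i U_i}$, so ``replace $U_i$ by $\overline{U_i}$'' does not by itself produce a cover of $\overline{A}$ --- this is precisely why the same argument fails for the Hausdorff dimension. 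It does work here because every admissible cover in the definition of $\PhiBdim$ is automatically finite: each $|U_i|\geq\Phi(\delta)>0$, so $\sum_i|U_i|^s\leq\eps$ forces the number of covering sets to be at most $\eps/\Phi(\delta)^s$, and a finite union of closures is closed. With that one sentence added, your proof is complete and coincides with the intended one.
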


\subsection{Assouad-type dimensions}\label{ss:assouad}
Assouad-type dimensions have become very popular in fractal geometry as well, for example Fraser's monograph \cite{F} is entirely dedicated to them. The Assouad dimension was used in P.~Assouad's PhD thesis \cite{Assouad} in the context of embedding problems, but it can even traced back to Bouligand \cite{Bou}. The natural dual notion of the Assouad dimension is the lower dimension, which was introduced by Larman in \cite{L}, where it was called the minimal dimensional number. 

The \emph{Assouad dimension} of a set $X\subset \RR^n$ is defined by
\begin{align*} 
\adim X=\inf\Big\{&\alpha : \text{there is a } C > 0 \text{ such that for all } 0 < r < R \text{ and}
\\ &\text{for all } x\in X \text{ we have } N(B(x,R)\cap X,r) \leq C \left(\frac{R}{r}\right)^\alpha\Bigg\}. 
\end{align*}
The \emph{lower dimension} of a set $X\subset \RR^n$ is defined as 
\begin{align*} 
\ldim X = \sup\Big\{&\alpha : \text{there is a } C > 0 \text{ such that for all } 0 < r < R \le \diam X
\\ &\text{and for all } x  \in X \text{ we have } N(B(x,R)\cap X,r) \ge C \left(\frac{R}{r}\right)^\alpha\Bigg\}. 
\end{align*}
Fraser and Yu \cite{FY} introduced the \emph{modified lower dimension} $\mldim$ by making the lower dimension monotonic as 
\begin{equation*}
\mldim X=\sup\{\ldim E: E\subset X\}.
\end{equation*}

\subsection{Inequalities between the dimensions} Let $\Phi$ be an arbitrary admissible function. The following theorem summarizes the inequalities between the above dimensions, see \cite{F} and the references above.  
\begin{claim} \label{c:ineq}
For any bounded set $X\subset \RR^n$ we have
\begin{equation*} 
\hdim X\leq \lbdim X \leq \ubdim X\leq  \adim X \text{ and } \hdim X\leq \PhiBdim X \leq \ubdim X;
\end{equation*}
and any set $X\subset \RR^n$ satisfies
\begin{equation*}
\hdim X \leq \lpdim X \leq \pdim X\leq  \adim X \text{ and } \hdim X \leq \PhiMBdim X \leq \pdim X.
\end{equation*}
We have $\ldim X\leq \mldim X$ and for closed sets $X$ also $\mldim X\leq \hdim X$.
\end{claim}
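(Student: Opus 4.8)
The plan is to verify each of the stated inequalities separately by unwinding the definitions; all of them are routine bookkeeping except the last one, $\mldim X\le\hdim X$ for closed $X$, which needs a genuine (known) argument. I begin with the chain $\hdim X\le\PhiBdim X\le\ubdim X\le\adim X$ for bounded $X$. If $s$ lies in the defining set of $\PhiBdim X$, then for every $\eps>0$ and all small $\delta$ there is a $\delta$-cover $\{U_i\}$ of $X$ with $\sum_i|U_i|^s\le\eps$, so $\iH^s_\delta(X)\le\eps$; letting $\delta\to0$ and then $\eps\to0$ gives $\iH^s(X)=0$, hence $\hdim X\le s$, and taking the infimum over such $s$ yields the first inequality. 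For the second, fix $s>\ubdim X$ and $\eps'\in(0,s-\ubdim X)$; since $\Phi(\delta)/\delta\to0$, for all small $\delta$ one can cover $X$ by $N(X,\Phi(\delta))$ sets of diameter in $[\Phi(\delta),\delta]$ (take balls of radius $\Phi(\delta)$, enlarging slightly if needed), and then $\sum_i|U_i|^s\lesssim N(X,\Phi(\delta))\,\Phi(\delta)^s\le C\,\Phi(\delta)^{\,s-\ubdim X-\eps'}\to0$, so $s$ lies in the defining set of $\PhiBdim X$. For the third, pick any $x_0\in X$ and put $R_0=\diam X$, so $X\su B(x_0,R_0)$ and hence for $\alpha>\adim X$ we get $N(X,r)=N(B(x_0,R_0)\cap X,r)\le C(R_0/r)^\alpha$, which forces $\ubdim X\le\alpha$.

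Next, the chain $\hdim X\le\PhiMBdim X\le\pdim X\le\adim X$ for arbitrary $X$. Both $\PhiMBdim$ and $\pdim$ are obtained from $\PhiBdim$ and $\ubdim$ by the same ``$\inf$ over bounded countable decompositions $X=\bigcup_iX_i$ of $\sup_i$'' operation. Since $\hdim$ is $\sigma$-stable and $\hdim X_i\le\PhiBdim X_i$ for bounded $X_i$ by the previous step, $\hdim X=\sup_i\hdim X_i\le\sup_i\PhiBdim X_i$ for every such decomposition, giving $\hdim X\le\PhiMBdim X$; likewise $\PhiBdim X_i\le\ubdim X_i$ gives $\PhiMBdim X\le\pdim X$. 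Finally, $\adim$ is monotone (in its defining inequality the center of the ball may be taken in the subset, so the counting function only decreases), hence $\ubdim X_i\le\adim X_i\le\adim X$ for every $X_i$ and thus $\pdim X\le\sup_i\ubdim X_i\le\adim X$.

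For the lower-dimension inequalities, the bound $\ldim X\le\mldim X$ is immediate from the definition of $\mldim$ with $E=X$. For $\mldim X\le\hdim X$ with $X$ closed I would use the known fact (see \cite{F}) that $\ldim F\le\hdim F$ for every non-empty closed $F$: a uniform lower bound $N(B(x,R)\cap F,r)\ge C(R/r)^s$ over $x\in F$ allows one to construct a Borel probability measure $\mu$ on $F$ with $\mu(B(x,r))\lesssim r^s$, and the mass distribution principle then gives $\hdim F\ge s$. To pass to $\mldim$, note that for bounded $E$ one has $\ldim\overline E\ge\ldim E$: given $x\in\overline E$ choose $y\in E$ with $|x-y|<r$, so for $r<R/2$, $N(B(x,R)\cap\overline E,r)\ge N(B(y,R-r)\cap E,r)\ge C((R/2)/r)^s$ (the case $r\ge R/2$ being trivial) while $\diam\overline E=\diam E$. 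Consequently, for closed $X$,
\[
\mldim X=\sup_{E\su X}\ldim E\le\sup_{E\su X}\ldim\overline E\le\sup_{E\su X}\hdim\overline E\le\hdim X,
\]
using that $\overline E\su X$ is closed and $\hdim$ is monotone.

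The only real obstacle is this last inequality: everything else is bookkeeping with the definitions, whereas $\mldim X\le\hdim X$ genuinely uses closedness, through the measure construction underlying $\ldim F\le\hdim F$ for closed $F$; the $\ldim\overline E\ge\ldim E$ step is then the bridge from that closed-set statement to the monotone modification $\mldim$.
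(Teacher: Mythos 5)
Your proposal is correct, but it is worth noting that the paper gives no argument at all for this Claim: it is stated as a summary of known inequalities with a pointer to \cite{F} and the references of Section~\ref{s:prel}. Your write-up supplies the standard proofs explicitly, and the individual steps all check out: the passage $\hdim\le\PhiBdim\le\ubdim$ via covers by balls of radius $\Phi(\delta)$ (using $\Phi(\delta)/\delta\to 0$ to get $2\Phi(\delta)\le\delta$), the bound $\ubdim X\le\adim X$ by taking $R=\diam X$, the transfer of the bounded-set chain to the $\sigma$-stabilized chain via the countable stability of $\hdim$ and the monotonicity of $\adim$, and the trivial $\ldim X\le\mldim X$. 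The only genuinely non-formal ingredient, $\ldim F\le\hdim F$ for non-empty closed $F$, you correctly identify and defer to the literature (it is \cite[Theorem~3.4.4]{F}, proved by a mass-distribution construction as you sketch). Your bridging step $\ldim \overline{E}\ge\ldim E$ is a good catch and is genuinely needed here, since the paper's definition of $\mldim$ takes the supremum over \emph{arbitrary} subsets $E\su X$, not just closed ones; your comparison $N(B(x,R)\cap\overline{E},r)\ge N(B(y,R-r)\cap E,r)$ for $y\in E$ with $|x-y|<r$ does the job (and in fact does not even require $E$ to be bounded, so the restriction you impose there is harmless). In short: same mathematics as the sources the paper cites, but written out; nothing is missing.
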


\subsection{Some basic properties of the dimensions}

Property (1), and bilipschitz invariance (5) of the Introduction hold for all of the above dimensions, monotonicity (2) holds for all of them except the lower dimension, while finite stability (3) holds for all of them except the lower box dimension and the lower dimension. The finite stability of the modified lower dimension can be found in \cite{BEK}. Lipschitz stability (4) is satisfied for all the above classical and intermediate dimensions, but fails to hold for the Assouad-type dimensions. The $\sigma$-stability (6) holds for the Hausdorff dimension, the packing dimension, the lower packing dimension, the modified upper $\Phi$-intermediate dimensions, but it fails to hold for the others, see \cite{F} and the references above.

\subsection{H\"older maps and bilipschitz equivalence}
Let $(X,d_X)$ and $(Y,d_Y)$ be metric spaces and $f\colon X\to Y$. We say that $f$ is $\alpha$-H\"older if there exists a finite constant $C$ such that for all $x,z\in X$ we have 
\begin{equation*} d_Y(f(x),f(z))\leq C \left(d_X(x,z)\right)^{\alpha}. 
\end{equation*} 
We call $f$ \emph{Lipschitz} if it satisfies the above inequality with $\alpha=1$. The metric spaces $X$ and $Y$ are said to be \emph{bilipschitz equivalent} if there exists a bijection $f\colon X\to Y$ such that both $f$ and its
inverse are Lipschitz.

\subsection{Self-similar sets, ultrametric spaces and Ahlfors--David regular sets}

Let $K\su \R^n$ 
be a self-similar set with 
contracting similarity maps $\{f_i\}_{1\leq i\leq m}$,
that is, $K=\bigcup_{i=1}^m f_i(K)$, and let
$r_i$ be the similarity ratio of $f_i$. 
The unique $s$ for which $r_1^s+\ldots+r_m^s=1$ is called the \emph{similarity dimension} of $K$.
If $r_1=\ldots=r_m$ then $K$ is said to be \emph{homogeneous}.
We say that $K$ satisfies the \emph{strong separation condition} (SSC) if $f_i(K)\cap f_j(K)=\emptyset$ for all $1\leq i< j\leq m$. 

A metric space $(X,d)$ is called \emph{ultrametric} if the triangle inequality is replaced with the stronger inequality
\begin{equation*} 
d(x,y)\leq \max\{d(x,z),d(y,z)\} \quad  \textrm{for all } x,y,z\in X.
\end{equation*}

This is equivalent to the property that if $a\leq b\leq c$ are sides of a triangle in $X$, then $b=c$. 
It is well known and easy to prove that every self-similar set with the SSC is bilipschitz equivalent to an ultrametric space.

Recall that for any $0 < s \leq n$ a Borel set $T \su \R^n$ is said to be \emph{Ahlfors--David $s$-regular}, or shortly 
\emph{$s$-regular} if 
there exist $c,C\in (0,\infty)$ such that 
\begin{equation*} 
c  r^{s} \leq \iH^{s}(T \cap B(t,r)) \leq C r^{s}
\end{equation*}
for all $t \in T$ and $0<r < \diam T$.

\subsection{Baire category}\label{ss:Baire}

Recall that a set $E\su\R^n$ is said to be \emph{nowhere dense} if every ball $B\su\R^n$ contains a ball $D$ disjoint to $E$.
A set is \emph{meager} if it can be obtained as a countable union of nowhere dense sets. 
We say that a set $E\su\R^n$ is \emph{co-meager} in a set $U\in\R^n$ if $U\sm E$ is meager.
By the Baire category theorem every dense $G_\delta$ set is co-meager in $\R^n$.

A set is said to have the \emph{Baire property}
if its symmetric difference with an open set is meager. In particular, by the above definitions every non-meager set with the Baire property is co-meager in a nonempty open set. 
It is well known that every Borel set has the Baire property.
For more on these notions and results see \cite{Kec}.

\section{Diameter restricted Hausdorff dimensions}
\label{s:dimrd}

Recall that on $\R$ the \emph{Sorgenfrey} topology $\tau_S$ is the topology generated by intervals of the form $[a,b)$. 

\begin{defi} \label{d:restdim}
Let
\begin{equation*}
\iD=\{ D\su (0,\infty) \colon D\in\tau_S, \inf D=0\}.    
\end{equation*}
Fix $D\in \iD$ and a positive integer $n$. First for $s\geq 0$ and $E\su \R^n$ we define the \emph{$s$-dimensional $D$-diameter restricted Hausdorff content} 
the same way as the Hausdorff content is defined, the only difference is that in the covering we allow only sets with diameter in $D$: 
\begin{equation*}
\iH^s_{D,\infty}(E)=\inf \left\{ \sum_{i=1}^\infty |E_i|^s: E\su \bigcup_{i=1}^{\infty} E_i,~|E_i|\in D \textrm{ for all $i$} \right\}.
\end{equation*}
Now we define the \emph{$D$-diameter restricted Hausdorff dimension} of $E$ 
the usual way:
\begin{equation*}
\drdim E = \inf \left\{ s\geq 0: \iH^s_{D,\infty}(E)=0\right\}.   
\end{equation*}
\end{defi}

\begin{remark} \label{r:wlogD}
We may assume without loss of generality that $D\in \iD$ is of the form
$D=\cup_{k=1}^{\infty} [2^{-n_k}, 2^{-n_k+1})$, where $n_k$ is a strictly increasing sequence of positive integers. Indeed, if $D$ is not of this form then let $n_k$ be the $k$th positive integer for which $D\cap [2^{-n_k}, 2^{-n_k+1})\neq \emptyset$, and let $\widetilde{D}=\cup_{k=1}^{\infty} [2^{-n_k}, 2^{-n_k+1})$. Then it is easy to see that $\drtdim E=\drdim E$ for all $E\subset \R^n$. Therefore, we basically calculate the Hausdorff dimension along a given scale sequence.
\end{remark}

\begin{remark} Note that we can extend the definition $\drdim E$ to all metric spaces $E\su X$ in an intrinsic way by changing the content as
\begin{equation*}
\widetilde{\iH}^s_{D,\infty}(E)=\inf \left\{ \sum_{i=1}^\infty |r_i|^s: E\su \bigcup_{i=1}^{\infty} B(x_i,r_i),~ x_i\in E \textrm{ and } r_i\in D \textrm{ for all $i$} \right\}.
\end{equation*}
\end{remark}

For any $D\in\iD$ the $D$-diameter restricted Hausdorff dimension $\drdim$ has the following basic properties. 
Most of them are the same as the basic properties of the Hausdorff dimension, and the simple proofs are also analogous.

\begin{prop}\label{p:easydrdim}
For any $D\in\iD$ the dimension $\drdim$ is
\begin{enumerate}[(i)]
\item \label{pD:1} Lipschitz-stable,
\item \label{pD:2} monotone,   
\item \label{pD:3} $\drdim S=\hdim S$ for any self-similar set $S$,
\item \label{pD:4} $\sigma$-stable, 
\item \label{pD:5} every $E\su\R^n$ can be covered by a $G_\delta$ set $B$ with $\drdim B=\drdim E$,
\item \label{pD:6} $\drdim\colon \iK(\RR^n)\to [0,n]$ is a Borel function, and
\item \label{pD:7} $\hdim E\le \drdim E\le \pdim E$ for any $E\su\R^n$.
\end{enumerate}
\end{prop}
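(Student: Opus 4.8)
The plan is to verify the seven properties one by one, leaning on the observation in Remark~\ref{r:wlogD} that we may assume $D=\bigcup_{k=1}^\infty[2^{-n_k},2^{-n_k+1})$, so covering by sets with diameter in $D$ amounts to covering by sets whose diameters are (up to a bounded factor) restricted to a fixed sparse scale sequence.

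\textbf{Items \eqref{pD:1}, \eqref{pD:2}, \eqref{pD:3}, \eqref{pD:5}.} For Lipschitz stability and monotonicity I would repeat verbatim the classical argument: if $f$ is $L$-Lipschitz and $\{E_i\}$ covers $E$ with $|E_i|\in D$, then $\{f(E_i)\}$ covers $f(E)$ with $|f(E_i)|\le L|E_i|$; the point is that one can then replace each $f(E_i)$ by a set of diameter exactly in $D$ of comparable size (enlarging slightly, using that $D$ is Sorgenfrey open so contains $[t,t+\delta)$ for each $t\in D$, and that the scales of $D$ are within a factor $2$ of each other), at the cost of a constant in front of $\iH^s_{D,\infty}$, which does not affect the vanishing of the content and hence not $\drdim$. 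Monotonicity is immediate since a cover of $B$ restricts to a cover of $A\subset B$. For \eqref{pD:3}, a self-similar set $S$ has $\hdim S=\lbdim S$ whenever $\lbdim S$ is attained along \emph{every} scale sequence (this holds for self-similar sets: they are "homogeneous" in the sense that $N(S,r)\asymp r^{-\hdim S}$ up to sub-exponential factors for all small $r$, or one uses the standard covering of $S$ by cylinders of roughly equal diameter which can be taken within any prescribed scale band). Concretely, cylinders $f_{\underline{i}}(S)$ have diameters densely filling $(0,\diam S]$, so for any $D\in\iD$ and any target diameter in $D$ one can choose a cylinder cover with diameters in $D$, giving $\drdim S\le \hdim S$; the reverse inequality is \eqref{pD:7}. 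Item \eqref{pD:5} is the usual trick: for $s>\drdim E$ one has $\iH^s_{D,\infty}(E)=0$, so there is a cover by sets $E_i^{(m)}$ with diameters in $D$ and $\sum_i|E_i^{(m)}|^s\le 1/m$; enlarging each to an open set of diameter still in $D$ (again using Sorgenfrey openness) with only a factor-$2$ loss and intersecting over $m$ yields a $G_\delta$ set $B\supseteq E$ with $\iH^{s}_{D,\infty}(B)=0$, and taking $s\downarrow\drdim E$ along a countable sequence and intersecting gives $\drdim B\le\drdim E\le\drdim B$.

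\textbf{Items \eqref{pD:4}, \eqref{pD:6}.} For $\sigma$-stability, the inequality $\drdim(\bigcup_i A_i)\ge\sup_i\drdim A_i$ is monotonicity, and $\le$ follows from countable subadditivity of the content $\iH^s_{D,\infty}$ together with the fact that $\iH^s_{D,\infty}(A_i)=0$ for all $i$ when $s>\sup_i\drdim A_i$; then $\iH^s_{D,\infty}(\bigcup_i A_i)\le\sum_i\iH^s_{D,\infty}(A_i)=0$, so $\drdim(\bigcup_i A_i)\le s$ for every such $s$. For measurability \eqref{pD:6}, I would show $\{K\in\iK(\R^n):\iH^s_{D,\infty}(K)<c\}$ is open in the Hausdorff metric for each $s,c$: if $K$ admits a finite cover by open sets of diameters in $D$ with $\sum|E_i|^s<c$ (finiteness and openness can be arranged for compact $K$ by the enlargement trick above), then the same cover works for all $K'$ close enough to $K$ in Hausdorff metric. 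Hence $K\mapsto\iH^s_{D,\infty}(K)$ is upper semicontinuous, so Borel, and $\drdim K=\inf\{s\in\Q_{\ge0}:\iH^s_{D,\infty}(K)=0\}$ is a countable infimum of Borel conditions, hence Borel.

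\textbf{Item \eqref{pD:7} --- the main point.} The inequality $\hdim E\le\drdim E$ is trivial since allowing only diameters in $D$ is a restriction: any admissible $D$-cover is an admissible cover in the definition of $\iH^s_\infty$, so $\iH^s_\infty(E)\le\iH^s_{D,\infty}(E)$. The substantive inequality is $\drdim E\le\pdim E$, and this is the step I expect to be the real obstacle. The natural route is: since both sides are $\sigma$-stable (just shown for $\drdim$; standard for $\pdim$), it suffices to prove $\drdim E\le\ubdim E$ for bounded $E$. So fix $s>\ubdim E$; then $N(E,r)\le r^{-s}$ for all small $r$. Given the scale sequence $2^{-n_k}$, for each $k$ choose $r=2^{-n_k}$ (or a comparable value so that balls of radius $r$ have diameter in $D$), obtaining a cover of $E$ by at most $2^{n_k s}$ sets of diameter $\asymp 2^{-n_k}$, whence (absorbing the constant from the enlargement trick into $s$) $\iH^s_{D,\infty}(E)\lesssim 2^{n_k s}\cdot 2^{-n_k s}$, which is bounded but \emph{not} small --- so a single scale does not give vanishing content. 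The fix is to use that $s>\ubdim E$ strictly: pick $s'$ with $\ubdim E<s'<s$; then $N(E,r)\le r^{-s'}$ for small $r$, giving $\iH^s_{D,\infty}(E)\lesssim 2^{n_k s'}2^{-n_k s}=2^{-n_k(s-s')}\to0$ as $k\to\infty$. Hence $\iH^s_{D,\infty}(E)=0$ for every $s>\ubdim E$, so $\drdim E\le\ubdim E\le\pdim E$ for bounded $E$, and $\sigma$-stability of both sides upgrades this to all $E\su\R^n$. The only care needed is the passage between "balls of radius $r$" and "sets of diameter exactly in $D$": since consecutive scales of $D$ are within a factor $2$ and $D$ is Sorgenfrey open, every $r$ of the form $2^{-n_k}$ is itself in $D$, so balls $B(x,2^{-n_k})$ have diameter $2^{-n_k+1}\in D$ or one shrinks to radius $2^{-n_k-1}$; either way the bookkeeping costs only a fixed constant, harmless after the strict inequality $s>s'$ is in hand.
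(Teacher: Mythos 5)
Your proof is correct and follows essentially the same route as the paper: items (i), (ii), (iv), (v), (vi) by the classical Hausdorff-dimension arguments (with the Sorgenfrey-openness enlargement trick), the key inequality $\drdim E\le\ubdim E$ for bounded $E$ via equal-size ball covers at a scale $2^{-n_k}\in D$ using a strict intermediate exponent $s'$, upgraded to $\pdim$ by $\sigma$-stability, and (iii) ultimately from $N(S,r)\asymp r^{-\hdim S}$, i.e.\ from $\hdim S=\ubdim S$ for self-similar sets, which is exactly how the paper deduces (iii) from (vii). The only misstep is your alternative justification of (iii) via cylinders whose diameters ``densely fill'' $(0,\diam S]$: this is false for, say, the homogeneous ratio-$1/3$ case, where cylinder diameters are the sparse sequence $3^{-k}\diam S$ and can miss a dyadic band of $D$ entirely; but since your first justification suffices, this does not affect the proof.
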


\begin{proof}
Properties \eqref{pD:1}, \eqref{pD:2}, \eqref{pD:4}, \eqref{pD:5} and \eqref{pD:6} can be proved the same way as for the Hausdorff dimension.

The first inequality of \eqref{pD:7} is clear from the definition. For $\drdim E\leq \pdim E$ by \eqref{pD:4} it is enough to prove that $\drdim E\leq \ubdim E$ for bounded sets $E$. Assume that $E$ is bounded and $s>\ubdim E$, then $E$ has a good cover with arbitrarily small balls of equal size. Thus $\iH^s_{D,\infty}(E)=0$, which implies $\drdim E\leq s$, so $\drdim E\leq \ubdim E$.

Since $\hdim S=\pdim S$ for all self-similar sets, \eqref{pD:7} implies \eqref{pD:3}.
\end{proof}

\begin{lemma}[Mass distribution principle for the $D$-diameter restricted Hausdorff dimensions] \label{l:mass}
Let $D\in \iD$ and $E\subset \R^n$. Assume that $\mu$ is a mass distribution supported within $E$ such that there are constants $C<\infty$ and $s\geq 0$ such that for each closed set $U\subset \R^n$ with $|U|\in D$ we have
\begin{equation*} \mu(U)\leq C|U|^s.
\end{equation*}
Then $\drdim E\geq s$.
\end{lemma}

\begin{proof}
Assume that $\{U_i\}_{i\geq 1}$ is a cover of $E$ such that $|U_i|\in D$ for all $i$. Let $V_i$ be the closure of $U_i$, then clearly $|V_i|=|U_i|$. Then
\begin{equation*}
0<\mu(E)\leq \mu\left(\bigcup_{i=1}^{\infty} V_i\right)\leq  
\sum_{i=1}^{\infty} \mu(V_i)\leq C\sum_{i=1}^{\infty} |U_i|^s.
\end{equation*}
Therefore $\iH^s_{D,\infty}(E)\geq \frac{\mu(E)}{C}$, so indeed $\drdim E\geq s$.
\end{proof}

\begin{notation}
For any $S\su\N$ we define a `uniformly branching set' defined by digit restrictions as follows, see e.g.~\cite[Example 1.3.2]{BP}:  
\begin{equation}\label{e:ASdef}
A_S=\left\{\sum_{i\in S} x_i 2^{-i} \colon x_i\in\{0,1\}\right\}, 
\end{equation}
and also let
\begin{equation}\label{e:MSdef}
 M_S(k)=\frac{\#(S\cap\{1,\ldots,k\})}{k}.   
\end{equation}
\end{notation}
It is well known (see e.g.~\cite{BP}) that for
any $S\su\N$ we have
\begin{align}
\label{e:pdimAS}
\pdim A_S=\ubdim A_S &=\limsup_{k\to\infty} M_S(k),  \\
\label{e:hdimAS}
\hdim A_S =\lbdim A_S &=\liminf_{k\to\infty} M_S(k).
\end{align}
Assume that $S_i\subset \N$ for all $i\geq 1$ and define
\begin{equation} \label{d:ASi}
A(\{S_i: i\geq 1 \})=\{0\}\cup \bigcup_{i=1}^{\infty} 2^{-i}(1+A_{S_i}),
\end{equation}
where we use the notation $aX=\{ax\colon x\in X\}$ and $a+X=\{a+x\colon x\in X\}$.

Let $(n_k)$ be a strictly increasing sequence of positive integers and let 
\begin{equation*}
D=\bigcup_{k=1}^\infty [2^{-n_k},2^{-n_k+1}).
\end{equation*}

\begin{claim}\label{c:dimofAS}
Using the above notation we have 
\begin{equation}\label{e:drdimofAS}
\drdim A_S =\liminf_{k\to\infty} M_S(n_k),\text{ and }
\end{equation}
\begin{equation} \label{e:dimASi}
\drdim A(\{S_i: i\geq 1 \})=\sup_{i\geq 1} \liminf_{k\to\infty} M_{S_i}(n_k).
\end{equation}
\end{claim}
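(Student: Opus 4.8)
The plan is to establish \eqref{e:drdimofAS} by a direct computation with coverings together with the mass distribution principle (Claim~\ref{c:mass}), and then to deduce \eqref{e:dimASi} from \eqref{e:drdimofAS} using the $\sigma$-stability and Lipschitz-stability of $\drdim$ recorded in Proposition~\ref{p:easydrdim}. Throughout we may assume $D$ has the normalized form $\bigcup_{k}[2^{-n_k},2^{-n_k+1})$ by Remark~\ref{r:wlogD}, which is exactly how $D$ is given in the statement.

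Write $N_S(k)=\#(S\cap\{1,\dots,k\})$, so $M_S(k)=N_S(k)/k$, and set $L=\liminf_{k\to\infty}M_S(n_k)$. The elementary structural fact about digit-restricted sets is that for every $k$ the set $A_S$ is covered by exactly $2^{N_S(k)}$ closed dyadic intervals of length $2^{-k}$ (those whose first $k$ binary digits are the ones allowed by $S$), and dually the ``coin-tossing'' probability measure $\mu$ on $A_S$ — the one assigning mass $2^{-N_S(k)}$ to each of these intervals — is well defined and supported on $A_S$. For the upper bound $\drdim A_S\le L$: given $s>L$, pick $s'\in(L,s)$ and infinitely many $k$ with $M_S(n_k)<s'$; since $2^{-n_k}\in D$, covering $A_S$ by the $2^{N_S(n_k)}<2^{s'n_k}$ dyadic intervals of length $2^{-n_k}$ yields $\iH^s_{D,\infty}(A_S)\le 2^{s'n_k}(2^{-n_k})^s=2^{-(s-s')n_k}$, and letting $n_k\to\infty$ along this subsequence gives $\iH^s_{D,\infty}(A_S)=0$, hence $\drdim A_S\le s$; then let $s\downarrow L$. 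For the lower bound $\drdim A_S\ge L$ (trivial if $L=0$): fix $s<L$. If $U\su\R$ is closed with $|U|\in D$, then $2^{-n_k}\le|U|<2^{-n_k+1}$ for some $k$, so $U$ meets at most three dyadic intervals of length $2^{-n_k}$ and therefore $\mu(U)\le 3\cdot 2^{-N_S(n_k)}$. Since $s<L$ we have $N_S(n_k)>sn_k$ for all large $k$, so for such $k$ we get $\mu(U)\le 3(2^{-n_k})^s\le 3|U|^s$; for the finitely many remaining (small) $k$ the diameter $|U|$ is bounded below by a positive constant while $\mu(U)\le 1$, so $\mu(U)\le C|U|^s$ holds with a single constant $C$ for all closed $U$ with $|U|\in D$. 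Claim~\ref{c:mass} then gives $\drdim A_S\ge s$, and we let $s\uparrow L$. This proves \eqref{e:drdimofAS}.

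For \eqref{e:dimASi}, observe that for each $i$ the map $x\mapsto 2^{-i}(1+x)$ is a bijection of $[0,1]$ onto $[2^{-i},2^{-i+1}]$ that is Lipschitz and has Lipschitz inverse $y\mapsto 2^iy-1$; hence by Lipschitz-stability (Proposition~\ref{p:easydrdim}\eqref{pD:1}) applied in both directions and by \eqref{e:drdimofAS},
\[
\drdim\!\big(2^{-i}(1+A_{S_i})\big)=\drdim A_{S_i}=\liminf_{k\to\infty}M_{S_i}(n_k).
\]
Since $A(\{S_i:i\ge1\})=\{0\}\cup\bigcup_{i\ge1}2^{-i}(1+A_{S_i})$ is a countable union and $\drdim\{0\}=0$, the $\sigma$-stability of $\drdim$ (Proposition~\ref{p:easydrdim}\eqref{pD:4}) yields \eqref{e:dimASi}.

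The main obstacle — indeed the only genuinely new point — is the lower bound in \eqref{e:drdimofAS}. It works precisely because every admissible diameter lies within a factor $2$ of one of the scales $2^{-n_k}$, so an admissible covering set meets only boundedly many level-$n_k$ dyadic cells and the estimate $\mu(U)\le C|U|^s$ survives. Care is also needed to absorb the finitely many ``small'' indices $k$ into the constant $C$, and to note that the naive scaling invariance of $\drdim$ fails (scaling diameters by $2^{-i}$ need not preserve $D$), which is why \eqref{e:dimASi} is obtained through Lipschitz-stability rather than a direct change of scale.
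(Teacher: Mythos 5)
Your proposal is correct and follows essentially the same route as the paper: the upper bound via the $2^{n_kM_S(n_k)}$ dyadic intervals of length $2^{-n_k}\in D$, the lower bound via the natural uniform (coin-tossing) measure and the mass distribution principle of Claim~\ref{c:mass}, and \eqref{e:dimASi} deduced from \eqref{e:drdimofAS} by $\sigma$-stability. The only difference is that you spell out the details the paper labels ``easy to check'' --- in particular, working with $s<L$ and letting $s\uparrow L$ rather than with $s=L$ directly, and making explicit the use of Lipschitz-stability for the rescaled pieces $2^{-i}(1+A_{S_i})$ --- which is a sound (indeed slightly more careful) rendering of the same argument.
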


\begin{proof}
  By the $\sigma$-stability of $\drdim$, \eqref{e:drdimofAS} implies \eqref{e:dimASi}, so it is enough to prove \eqref{e:drdimofAS}. The set $A_S$ can be clearly covered by $2^{n_k M_S(n_k)}$ intervals of length $2^{-n_k}$,
which shows that 
$\drdim A_S \le \liminf_{k\to\infty} M_S(n_k)$.

Let $s=\liminf_{k\to\infty} M_S(n_k)$.
By Lemma~\ref{l:mass}, to prove the reverse inequality it is enough to construct a mass distribution $\mu$ supported on $A_S$ with a finite constant $C$ such that 
\begin{equation} \label{e:mu}
\mu(U)\le C|U|^s  \text{ for every closed set } U\su\R. 
\end{equation}
Let $\mu$ be the natural uniform mass distribution supported on $A_S$: for each $j\geq 1$ our set $A_S$ is a union of $2^{jM_S(j)}$ isometric compact sets of diameter at most $2^{-j}$, let us assign mass $2^{-jM_S(j)}$ to all of them. It is easy to check that this $\mu$ indeed satisfies \eqref{e:mu}, which concludes the proof.
\end{proof}

Now we can prove the following Darboux-type result for the diameter restricted Hausdorff dimensions of the sets of the form \eqref{e:ASdef}.   

\begin{theorem}\label{t:ASDarboux}
Let $S\subset \NN$. Then
\begin{equation*}
\left\{\drdim A_S: D\in \iD\right\}=[\hdim A_S, \pdim A_S].
\end{equation*}
\end{theorem}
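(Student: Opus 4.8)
The plan is to prove the two inclusions separately. The inclusion $\{\drdim A_S : D\in\iD\}\subseteq[\hdim A_S,\pdim A_S]$ is immediate from Proposition~\ref{p:easydrdim}\,\eqref{pD:7}, which gives $\hdim E\le\drdim E\le\pdim E$ for every $E\subset\R^n$; so the content is the reverse inclusion, i.e.\ that \emph{every} value in $[\hdim A_S,\pdim A_S]$ is attained as $\drdim A_S$ for some scale set $D\in\iD$. Recalling \eqref{e:hdimAS}--\eqref{e:pdimAS}, write $\underline m=\liminf_{k\to\infty}M_S(k)=\hdim A_S$ and $\overline m=\limsup_{k\to\infty}M_S(k)=\pdim A_S$, and fix a target value $t\in[\underline m,\overline m]$. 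By Remark~\ref{r:wlogD} it suffices to produce a strictly increasing sequence $(n_k)$ of positive integers with, by Claim~\ref{c:dimofAS} (equation \eqref{e:drdimofAS}),
\begin{equation*}
\liminf_{k\to\infty}M_S(n_k)=t .
\end{equation*}

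First I would record the elementary but crucial fact that $k\mapsto M_S(k)$ has \emph{bounded increments}: since $S\cap\{1,\dots,k+1\}$ and $S\cap\{1,\dots,k\}$ differ by at most one element,
\begin{equation*}
\bigl|(k+1)M_S(k+1)-kM_S(k)\bigr|\le 1,
\quad\text{hence}\quad
|M_S(k+1)-M_S(k)|\le\frac{1+M_S(k)}{k+1}\le\frac{2}{k+1}\to 0 .
\end{equation*}
Thus the sequence $(M_S(k))_{k}$ is ``asymptotically continuous'': its set of subsequential limits is exactly the closed interval $[\underline m,\overline m]$ (a standard consequence of the increments tending to $0$, via a discrete intermediate value argument). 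In particular, for the chosen $t\in[\underline m,\overline m]$ there are infinitely many $k$ with $M_S(k)$ arbitrarily close to $t$.

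Next I would build $(n_k)$ greedily so as to force the liminf along it to equal $t$. The ``$\le t$'' side is easy: pick, for each $j$, an index $n_j$ (larger than $n_{j-1}$) with $M_S(n_j)<t+1/j$; such $n_j$ exist because $t\le\overline m=\limsup M_S(k)$ actually we only need $t$ to be a subsequential limit, which it is. This already gives $\liminf_k M_S(n_k)\le t$. To also secure ``$\ge t$'' we must be careful that the chosen indices do not dip \emph{below} $t$ too far infinitely often; here the bounded-increment property is what saves us. The cleanest route: choose $n_j$ to be an index where $M_S$ attains a value in the window $(t-1/j,\,t+1/j)$ \emph{and} is a ``local near-extremum'' in the sense that $M_S(k)\ge t-2/j$ for all $k$ in some range $[n_{j-1}',n_j]$ reaching back far enough — possible because the increments are $O(1/k)$, so once $M_S$ is within $1/j$ of $t$ it cannot have been below $t-2/j$ very recently, and since $t\le\overline m$ the sequence must climb back up to near $t$ infinitely often after any excursion downward. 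One then includes in $(n_k)$ all integers in these ranges $[n_{j-1}',n_j]$, guaranteeing that for every sufficiently large $n_k$ in the sequence one has $M_S(n_k)\ge t-2/j$ with $j\to\infty$, whence $\liminf_k M_S(n_k)\ge t$. Combining the two bounds gives $\liminf_k M_S(n_k)=t$, and then \eqref{e:drdimofAS} yields $\drdim A_S=t$ for this $D=\bigcup_k[2^{-n_k},2^{-n_k+1})$.

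The main obstacle I anticipate is purely bookkeeping in this last step: one must interleave the ``pull down to $t+1/j$'' requirement with the ``stay above $t-2/j$ on a cofinal block'' requirement while keeping $(n_k)$ strictly increasing, and treat the boundary cases $t=\underline m$ (take $n_k=k$) and $t=\overline m$ (choose $n_k$ along a subsequence realizing the $\limsup$) possibly separately for cleanliness. No genuinely hard idea is needed beyond the observation that $|M_S(k+1)-M_S(k)|\to0$ forces the subsequential-limit set to be the full interval $[\underline m,\overline m]$ and, more quantitatively, controls how fast $M_S$ can move between consecutive integers; everything else is an elementary construction.
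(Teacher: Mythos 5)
Your proposal is correct and follows essentially the same route as the paper: one inclusion from Proposition~\ref{p:easydrdim}\,\eqref{pD:7}, and the other by using Claim~\ref{c:dimofAS} to reduce the problem to finding a strictly increasing sequence $(n_k)$ with $\liminf_k M_S(n_k)=t$. The only difference is that you explicitly justify the subsequence-extraction step via the bounded-increment estimate $|M_S(k+1)-M_S(k)|\le 2/(k+1)$, whereas the paper invokes the ``fact'' that any $\alpha\in[\liminf a_k,\limsup a_k]$ is a subsequential limit of a bounded sequence $(a_k)$ --- a statement that is false for arbitrary bounded sequences (e.g.\ $a_k=(-1)^k$) and genuinely needs the vanishing increments of $M_S$ that you supply, so your write-up is in fact the more careful of the two (though you could simplify your construction by just extracting a subsequence \emph{converging} to $t$, which forces the liminf to equal $t$).
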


\begin{proof}
By Proposition~\ref{p:easydrdim}~\eqref{pD:7} we have $\{\drdim A_S: D\in \iD\}\subset [\hdim A_S, \pdim A_S]$. The inclusion $[\hdim A_S, \pdim A_S]\subset \{\drdim A_S: D\in \iD\}$  follows from Claim~\ref{c:dimofAS} by using the fact that for any bounded sequence $a_k$ and any $\alpha\in[\liminf a_k,\limsup a_k]$ there is a subsequence $a_{n_k}$ that converges to $\alpha$.
 \end{proof}

One can ask whether a similar Darboux property holds for every compact set $K\subset \R^n$. The following result shows that the answer is negative, we give a counterexample of the form \eqref{d:ASi}.

\begin{prop}
There exists a compact set $K\su\R$ of the form \eqref{d:ASi} for which 
\begin{equation*}
\left\{\drdim K \colon D\in \iD\right\} = \{0,1\}.
\end{equation*}
\end{prop}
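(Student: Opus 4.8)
The idea is to build $K=A(\{S_i:i\ge 1\})$ so that for every $D\in\iD$ the dimension $\drdim K$ is forced to be either $0$ or $1$, and both values are attained. By Claim~\ref{c:dimofAS}, if we write $D=\bigcup_k[2^{-n_k},2^{-n_k+1})$ then
\[
\drdim A(\{S_i:i\ge 1\})=\sup_{i\ge 1}\liminf_{k\to\infty}M_{S_i}(n_k),
\]
so I need a countable family of sets $S_i\su\N$ with the property that for \emph{every} strictly increasing sequence $(n_k)$, either $\liminf_k M_{S_i}(n_k)=0$ for all $i$ simultaneously, or $\liminf_k M_{S_i}(n_k)=1$ for some $i$. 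Equivalently: for every infinite $T\su\N$ (the range of $(n_k)$), either some $S_i$ has density along $T$ with liminf equal to $1$, or all of them have liminf $0$ along $T$.

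\textbf{Construction.}
Fix a rapidly increasing sequence of ``block endpoints'' $0=b_0<b_1<b_2<\cdots$ with $b_{j}/b_{j-1}\to\infty$, and let $I_j=\{b_{j-1}+1,\ldots,b_j\}$. The set $S_i$ will be a union of some of these blocks: put $S_i=\bigcup_{j\in J_i}I_j$ for a suitable index set $J_i\su\N$. Because the blocks grow fast, for $k\in I_j$ the ratio $M_{S_i}(k)$ is close to $1$ if $j\in J_i$ (the last block $I_j$ dominates the count) and close to $0$ if $j\notin J_i$ and $j-1\in J_i$ is ``recent'' — more precisely, if $k=b_j$ then $M_{S_i}(b_j)\approx 1$ when $j\in J_i$ and $M_{S_i}(b_j)\approx (\text{mass of }S_i\text{ in }\le b_{j-1})/b_j\to 0$ when $j\notin J_i$. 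So along the block-endpoint sequence, $M_{S_i}(b_j)$ essentially equals the indicator $\mathbf 1_{J_i}(j)$ up to $o(1)$. I now choose the family $\{J_i\}$ to be an infinite \emph{almost disjoint} family of infinite subsets of $\N$ whose union is \emph{not} all of $\N$ — in fact I want: for every infinite $T\su\N$ of block-indices, either $T$ has infinite intersection with some $J_i$ that is moreover \emph{cofinite in its tail relative to $T$} (giving liminf $1$), or $T$ escapes every $J_i$ infinitely often (giving liminf $0$). Concretely, take $J_i=\{i,i+1,i+2,\ldots\}$ — i.e. $J_i$ is a tail of $\N$. Then $S_1\supseteq S_2\supseteq\cdots$, and for an infinite set $T$ of block-indices: if $T$ is cofinite (misses only finitely many integers), then eventually $T\su J_i$ for every $i$, so $M_{S_i}(b_j)\to 1$ along $T$ for each $i$, giving $\drdim=1$; if $T$ is co-infinite, then $T$ meets $\N\setminus J_i$ infinitely often for every $i$ (since $\N\setminus J_i=\{1,\ldots,i-1\}$ is finite — wait, this fails). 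Let me instead take $J_i=\N\setminus\{$multiples of the $i$-th prime$\}$ or, cleanest: reindex so that the relevant dichotomy is automatic. The working choice: let $(n_k)$ range over subsequences; I want a single $S$ (so $i=1$ suffices!) with $\{\liminf_k M_S(n_k):(n_k)\}=\{0,1\}$ only — but Theorem~\ref{t:ASDarboux} shows this is impossible unless $\hdim A_S=0,\pdim A_S=1$, and then the set of liminf values along \emph{sub}sequences of a fixed sequence is all of $[0,1]$; however subsequences need not be of the form required — actually $\iD$ allows \emph{every} increasing $(n_k)$, so a single $A_S$ gives the full interval. Hence I genuinely need the $A(\{S_i\})$ form and the sup to collapse the interval. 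So: choose $S_i=\bigcup_{j\equiv 0\ (i)}I_j\cup(\text{all of }I_j\text{ for }j\le i)$...

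\textbf{The real mechanism and the main obstacle.}
The honest approach: arrange the $S_i$ so that $\{M_{S_i}(k)\}_k$ takes values essentially only in $\{0\}\cup\{1\}$ with long runs, with the runs of $1$'s for different $i$ placed on a tree-like schedule so that any sequence $(n_k)$ either stays in the ``all-zero region'' of every $S_i$ infinitely often (liminf $0$ for each $i$, sup $=0$), or is eventually trapped, for some $i$, inside a region where $M_{S_i}\approx 1$ from some point on along $(n_k)$ (liminf $1$, sup $=1$). This is a Baire-category / diagonalization design on the space of sequences. The main obstacle is exactly this design: ensuring the dichotomy is \emph{exhaustive} over all of $\iD$ simultaneously, i.e. that no clever oscillating $(n_k)$ can produce an intermediate liminf for the supremum. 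I expect to resolve it by making the blocks $I_j$ of $S_i$ so sparse/dense that $M_{S_i}(k)\in[0,\eps_k]\cup[1-\eps_k,1]$ with $\eps_k\to 0$ for every $k$ and every $i$ (a ``bang-bang'' density profile), using super-exponential block growth; then $\liminf_k M_{S_i}(n_k)\in\{0,1\}$ automatically for each $i$ and each sequence, and the only remaining point is to check that it is impossible to have $\liminf_k M_{S_i}(n_k)=0$ for all $i$ while the construction was supposed to force some $i$ to $1$ — which I rule out by the nesting/covering property that the ``$1$-regions'' of the $S_i$ cover all sufficiently large $k$. Finally, to see both $0$ and $1$ occur: take $(n_k)=(b_j)_j$ restricted to $j$ in the $1$-region of $S_1$ to get $1$, and take $(n_k)$ inside the common $0$-region to get $0$; compactness of $K$ is automatic from the form \eqref{d:ASi}, and $K\su\R$.
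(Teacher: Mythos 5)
There is a genuine gap: no working construction is actually produced, and the one concrete mechanism you commit to is impossible. You correctly reduce the problem via \eqref{e:dimASi} to finding sets $S_i\su\N$ such that for every increasing sequence $(n_k)$ either $\liminf_k M_{S_i}(n_k)=0$ for all $i$ or $\sup_i\liminf_k M_{S_i}(n_k)=1$. But your ``real mechanism'' rests on arranging $M_{S_i}(k)\in[0,\eps_k]\cup[1-\eps_k,1]$ for \emph{every} $k$, and no such set exists once its density oscillates: since
\begin{equation*}
\left|M_S(k+1)-M_S(k)\right|=\frac{\left|\delta-M_S(k)\right|}{k+1}\le\frac{1}{k+1},\qquad \delta\in\{0,1\},
\end{equation*}
the function $k\mapsto M_S(k)$ moves by at most $1/(k+1)$ per step, so in passing from a value near $0$ to a value near $1$ it hits every intermediate level up to an error $O(1/k)$. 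Hence for each $S_i$ with $\liminf_kM_{S_i}(k)=0<\limsup_kM_{S_i}(k)$ there are infinitely many $k$ with $M_{S_i}(k)\approx 1/2$, and an adversarial $(n_k)$ can sample exactly those $k$. Your fallback condition (``the $1$-regions of the $S_i$ cover all large $k$'') does not rescue this, because the index $i$ for which $n_k$ lies in a $1$-region may vary with $k$, so no single $i$ need have $\liminf_kM_{S_i}(n_k)=1$; and all the explicit candidates you try ($J_i$ a tail, prime-indexed blocks, a single $S$) are abandoned in the text itself.

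The idea your proposal is missing is the one the paper uses: the $S_i$ must be \emph{multiplicatively nested around common centers}. The paper takes $S_i=\bigcup_{m\ge i}\{\frac1i((3m)!),\dots,i((3m)!)\}$, so that all $S_i$ have blocks around the same points $(3m)!$, widened by a factor $i$ on \emph{both} sides. The point is not that each $M_{S_i}$ is bang--bang; it is that a \emph{moderate} density forces proximity to a center: if $M_{S_\ell}(n_k)>1/j$ for all large $k$, then $n_k$ must lie between $\frac1\ell((3m)!)$ and $\ell j((3m)!)$ for some $m$, and then the much wider block $\{\frac{1}{i\ell}((3m)!),\dots,i\ell((3m)!)\}\su S_{i\ell}$ fills up all but a $1/i$-fraction of $\{1,\dots,n_k\}$, giving $\liminf_kM_{S_{i\ell}}(n_k)\ge1-1/i$ and hence supremum $1$. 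The dichotomy is thus between ``all liminfs are $0$'' and ``some liminf is positive, which already forces the supremum to be $1$''; your dichotomy between ``all $0$'' and ``some equal to $1$'' is not the right one and cannot be engineered set-by-set. As stated, the proposal does not prove the proposition.
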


\begin{proof} Let $K=A(\{S_i: i\geq 1 \})$ where $S_i$ are defined as follows. For $i\geq 1$ let 
\begin{equation*}
S_i=\bigcup_{m=i}^{\infty} \left\{\frac 1i((3m)!), \frac 1i((3m)!)+1,\dots,i((3m)!)\right\}.
\end{equation*}
By \eqref{e:hdimAS}, we have $\hdim A_{S_i}=\liminf_{k} M_{S_i}(k)=0$ for all $i\geq 1$, so for $D=(0,\infty)$ we have $\drdim K=\hdim K=0$.
By Theorem~\ref{t:ASDarboux} and \eqref{e:pdimAS} we obtain that
\begin{equation}\label{e:nonzerowitness}  
\sup\left\{\drdim A_{S_i}\colon D\in\iD\right\}=
\pdim A_{S_i}= \limsup_{k\to \infty} M_{S_i}(k)=1 -\frac{1}{i^2}.
\end{equation}

Fix $D\in\iD$.
If $\drdim A_{S_i}=0$ for all $i\geq 1$, then by the $\sigma$-additivity of $\drdim$ we clearly have  $\drdim K=0$. Now assume that there is a positive integer $\ell$ such that $\drdim A_{S_{\ell}}>0$. This is indeed possible since by \eqref{e:nonzerowitness} such $D$ exists. It remains to prove that in this case $\drdim K=1$. 

By Remark~\ref{r:wlogD}, we may assume that
\begin{equation*}
D=\bigcup_{k=1}^\infty [2^{-n_k},2^{-n_k+1}),
\end{equation*}
where $n_k$ is a strictly increasing sequence of positive integers. 
Then \eqref{e:drdimofAS} and $\drdim A_{S_{\ell}}>0$ imply that we can
choose a positive integer $j$ such that for all large enough $k$ we have 
\begin{equation} \label{e:1/j}
M_{S_{\ell}}(n_k)>\frac 1j,
\end{equation} 
and the definition of $S_{\ell}$ and \eqref{e:1/j} imply that there is an $m(k)\in\N$ such that
\begin{equation} \label{e:nkin} 
n_k\in \left\{\frac 1\ell((3m(k))!),\dots, \ell j ((3m(k))!)\right\}.
\end{equation}
Thus we obtained that 
for all large enough $k$ there exists a positive integer $m(k)$ such that 
\eqref{e:nkin} holds. Since for all $i\geq j$ and large enough $k$ we have 
\begin{equation} \label{e:subS}
\left\{\frac {1}{i\ell}((3m(k))!),\dots, \ell j ((3m(k))!)\right\}\subset S_{i\ell},
\end{equation} 
\eqref{e:nkin} and \eqref{e:subS} imply that
\begin{equation} \label{e:Sli} 
\liminf_{k\to \infty} M_{S_{i \ell}}(n_k)\geq 1-\frac 1i.
\end{equation}
Therefore \eqref{e:dimASi} and \eqref{e:Sli} yield
\begin{equation*} \drdim K = \sup_{i\geq 1} \liminf_{k\to\infty} M_{S_{i}}(n_k)=1,
\end{equation*}
and the proof is complete.
\end{proof}

As a consequence of Theorem~\ref{t:ASDarboux}, we can give a negative answer to Question~\ref{q:Fraser}
of Fraser.

\begin{cor}
There exist continuum many different dimensions among the $D$-restricted Hausdorff dimensions that differ from the Hausdorff dimension and satisfy all the properties (1)-(7) of the Introduction. 
\end{cor}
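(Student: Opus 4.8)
The plan is to read the corollary off from Theorem~\ref{t:ASDarboux} together with Proposition~\ref{p:easydrdim}; essentially no new work is needed, only a careful packaging.

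First I would check that for \emph{every} $D\in\iD$ the dimension $\drdim$ has all of the properties (1)--(7) of the Introduction. Monotonicity (2), $\sigma$-stability (6) and measurability (7) are exactly items \eqref{pD:2}, \eqref{pD:4} and \eqref{pD:6} of Proposition~\ref{p:easydrdim}; finite stability (3) is a formal consequence of $\sigma$-stability; Lipschitz stability (5) is item \eqref{pD:1}, and it implies bilipschitz invariance (4); and property (1) follows from \eqref{pD:3}, since the Hausdorff dimension of a self-similar set with the SSC equals its similarity dimension. Thus the whole content of the corollary is to produce continuum many pairwise distinct dimensions of this form, none of which is $\hdim$.

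To do this I would fix a set $S\su\N$ with $\hdim A_S<\pdim A_S$; such $S$ clearly exist by \eqref{e:pdimAS}--\eqref{e:hdimAS} — for instance, partition $\N$ into consecutive blocks whose lengths grow fast enough that each block dwarfs the union of all the previous ones, and let $S$ be the union of every second block, so that $\liminf_k M_S(k)=0<1=\limsup_k M_S(k)$. (One could equally reuse any of the sets $A_{S_i}$ with $i\ge 2$ from the proof of the preceding proposition, where $\hdim A_{S_i}=0<1-1/i^2=\pdim A_{S_i}$.) By Theorem~\ref{t:ASDarboux}, as $D$ ranges over $\iD$ the value $\drdim A_S$ ranges over the whole nondegenerate interval $[\hdim A_S,\pdim A_S]$, so for each $t\in(\hdim A_S,\pdim A_S]$ we may pick some $D_t\in\iD$ with $\dim_H^{D_t}A_S=t$. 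The functions $\dim_H^{D_t}$, for $t\in(\hdim A_S,\pdim A_S]$, take pairwise distinct values on the single compact set $A_S$, hence are pairwise distinct; each differs from $\hdim$ because $\dim_H^{D_t}A_S=t>\hdim A_S$; and each has properties (1)--(7) by the previous paragraph. Since $(\hdim A_S,\pdim A_S]$ has cardinality continuum, this completes the proof. The one place that deserves a moment's care — and the reason for exhibiting a single witness set $A_S$ — is the passage from ``continuum many admissible scale sets $D$'' to ``continuum many genuinely different dimension functions''; beyond that there is no obstacle.
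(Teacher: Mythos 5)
Your proposal is correct and follows essentially the same route as the paper: fix $S\su\N$ with $\hdim A_S<\pdim A_S$, invoke Theorem~\ref{t:ASDarboux} to realize every value in $(\hdim A_S,\pdim A_S]$ as $\drdim A_S$ for some $D\in\iD$, use the single witness set $A_S$ to separate the resulting dimensions from each other and from $\hdim$, and verify properties (1)--(7) via Proposition~\ref{p:easydrdim}. The only cosmetic difference is that you construct the set $S$ of non-existent density explicitly, whereas the paper simply asserts its existence.
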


\begin{proof}
Fix $S\su\N$ for which the lower and upper densities differ, 
so \eqref{e:pdimAS} and \eqref{e:hdimAS} imply $\hdim A_S<\pdim A_S$.
By Theorem~\ref{t:ASDarboux} for any $\alpha\in (\hdim A_S, \pdim A_S]$ there is a $D=D(\alpha)\in \iD$ such that $\drdim A_S=\alpha$. These dimensions satisfy all the properties (1)--(7) of the Introduction by Proposition~\ref{p:easydrdim}.  
The set $A_S$ witnesses that these dimensions are distinct and differ from the Hausdorff dimension.
\end{proof}

One can ask if the $D$-diameter restricted Hausdorff dimensions differ from the existing notions of dimensions. 
As we already saw one of the continuum many of them is the Hausdorff dimension.
The next result excludes all other dimensions of Section~\ref{s:prel}.

\begin{prop}
    For any $D\in\iD$ the $D$-restricted Hausdorff dimension can agree only with the Hausdorff dimension among the dimensions defined or mentioned in Section~\ref{s:prel}.
\end{prop}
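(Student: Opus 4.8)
The goal is to show that for a fixed $D\in\iD$, the dimension $\drdim$ cannot coincide (as a function on all subsets of $\R^n$, or at least on compact sets) with any of the dimensions $\lbdim,\ubdim,\pdim,\adim,\ldim,\mldim,\PhiBdim,\PhiMBdim$ — except of course the Hausdorff dimension, which is the $D=(0,\infty)$ case. The natural strategy is to separate into two cases: the trivial case $D=(0,\infty)$, where $\drdim=\hdim$ and we are done; and the case $D\neq(0,\infty)$, where by Remark~\ref{r:wlogD} we may write $D=\bigcup_k[2^{-n_k},2^{-n_k+1})$ with $(n_k)$ a strictly increasing sequence that is \emph{not} all of $\N$, i.e.\ infinitely many positive integers are omitted. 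I would then exhibit, for each target dimension, a single set (ideally of the form $A_S$) on which $\drdim$ disagrees with it.

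The key tool is Claim~\ref{c:dimofAS}: $\drdim A_S=\liminf_k M_S(n_k)$, together with $\hdim A_S=\liminf_k M_S(k)$, $\pdim A_S=\ubdim A_S=\limsup_k M_S(k)$. First I would dispose of the monotone, ``large'' dimensions $\ubdim,\pdim,\adim$: pick $S$ with $\liminf_k M_S(k)<\liminf_k M_S(n_k)$, which is possible whenever infinitely many integers are missing from $(n_k)$ — concentrate the digits of $S$ on long blocks positioned to be seen by the subsequence $(n_k)$ but diluted on the full scale. Then $\drdim A_S>\hdim A_S$, and since for these sets $\ubdim A_S=\pdim A_S=\hdim A_S$ is false in general we instead need $\drdim A_S\neq \ubdim A_S$ etc.; here the cleaner route is to also pick, for the ``small'' dimensions $\lbdim,\ldim,\mldim$, a set with $\liminf_k M_S(n_k)>\liminf_k M_S(k)$ and $\drdim A_S<\pdim A_S$, so that $\drdim$ is strictly between $\hdim$ and $\pdim$ on that set while the others sit at an endpoint. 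Concretely: for $\lbdim,\hdim,\ldim,\mldim$ (which on $A_S$ all equal $\liminf_k M_S(k)$, the last three using that $A_S$ is compact and the claim in Claim~\ref{c:ineq}) use an $S$ with $\liminf_k M_S(n_k)\in(\liminf_k M_S(k),\limsup_k M_S(k))$; for $\ubdim,\pdim,\adim,\PhiBdim,\PhiMBdim$ (which on $A_S$ all equal $\limsup_k M_S(k)$, Assouad because $A_S$ is Ahlfors--David-type or directly, $\Phi$-intermediate by Claim~\ref{c:ineq} squeezed between $\hdim$ — no, rather squeezed between $\ubdim$ from above and... ) use the same set. So a single well-chosen $A_S$ with $0<\liminf M_S(k)<\drdim A_S<\limsup M_S(k)<n$ simultaneously separates $\drdim$ from every listed dimension at once, provided such $S$ exists for the given omitted-integer structure of $(n_k)$.

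The construction of $S$ is the technical heart. Given that $(n_k)$ omits infinitely many integers, I would build $S$ block by block along the scales $n_k$: on a sparse set of indices $k$ make $M_S(n_k)$ large (put a long run of consecutive integers into $S$ ending near $n_k$), and between consecutive such scales insert long stretches with no elements of $S$, so that along \emph{some} subsequence of the full integers $M_S$ is small while along the whole sequence $(n_k)$ it stays bounded below. Balancing so that $\liminf_k M_S(k)$, $\liminf_k M_S(n_k)$, and $\limsup_k M_S(k)$ land at three prescribed distinct values in $(0,1)$ (or $(0,n)$ after the standard trick of embedding a product) is a routine but slightly fiddly interval-bookkeeping argument; one must use that the gaps in $(n_k)$ can be exploited to depress the full-sequence $\liminf$ below the $(n_k)$-$\liminf$.

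**Main obstacle.** The delicate point is the $\Phi$-intermediate dimensions $\PhiBdim$ and $\PhiMBdim$: there is a continuum of admissible $\Phi$, and for self-similar sets $A_S$ the value of $\PhiBdim A_S$ depends on $\Phi$, so I cannot pin it to a single number via Claim~\ref{c:ineq} alone. The resolution is to note that $\PhiBdim$ and $\PhiMBdim$ are only ``mentioned'' in Section~\ref{s:prel} for a \emph{generic} admissible $\Phi$, so it suffices that for each fixed $\Phi$ there is \emph{some} set on which $\drdim$ disagrees with $\PhiBdim$; and since $\PhiMBdim$ is $\sigma$-stable and hence (on closed sets) lies between $\hdim$ and $\pdim$ just like $\drdim$, one separates them by a set built to have $\drdim$ and $\PhiMBdim$ at different points of $[\hdim,\pdim]$ — using the interpolation Theorem~\ref{t:ASDarboux} for $\drdim$ and Banaji's interpolation for $\PhiBdim$ to realise the mismatch, or more simply by taking $S$ as above and invoking that $\PhiBdim A_S\ge\hdim A_S$ while additionally $\PhiBdim A_S\le\ubdim A_S=\limsup M_S(k)$, then checking the strict gap with $\drdim A_S=\liminf M_S(n_k)$ forces a value of $M_S$ that $\PhiBdim$ cannot take because the $\Phi$-covers see all small scales. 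This $\Phi$-case is where I expect the argument to require the most care.
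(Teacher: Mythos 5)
Your overall strategy --- exhibiting, for each listed dimension other than $\hdim$, an explicit set of the form $A_S$ on which it differs from $\drdim$ --- has the right logical shape, but two of its steps fail or are missing, and the paper avoids both by a much shorter abstract argument. First, your claim that one can arrange $\liminf_k M_S(k)<\liminf_k M_S(n_k)$ ``whenever infinitely many integers are missing from $(n_k)$'' is false: if $n_{k+1}/n_k\to 1$ (e.g.\ $n_k=k^2$, which omits almost all integers), then for every $m$ with $n_k\le m\le n_{k+1}$ one has $M_S(m)\ge M_S(n_k)\,n_k/n_{k+1}$, so $\liminf_m M_S(m)=\liminf_k M_S(n_k)$ for every $S$ and no separating set of your type exists. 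In that regime $\drdim=\hdim$ identically (as the paper notes in the proof of its H\"older theorem), so the statement still holds, but via a different argument --- namely that $\hdim$ itself differs from each of the other listed dimensions --- which your write-up does not supply; the correct dichotomy is $\limsup_k n_{k+1}/n_k>1$ versus $=1$, not ``infinitely many integers omitted''.

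Second, and more seriously, the $\Phi$-intermediate case is not actually handled: for a fixed admissible $\Phi$ you would need to locate a set on which $\PhiBdim$ (and separately $\PhiMBdim$) differs from $\drdim$, and since $\PhiBdim A_S$ is some $\Phi$-dependent value in $[\hdim A_S,\ubdim A_S]$ that you never compute, nothing prevents it from landing exactly on $\liminf_k M_S(n_k)$ for your chosen $S$. You flag this yourself but offer only a gesture at ``Banaji's interpolation''. The paper sidesteps all of this: by Proposition~\ref{p:easydrdim}, $\drdim$ is $\sigma$-stable, which already excludes every listed dimension except $\hdim$, $\pdim$ and $\PhiMBdim$; and every set has a $G_\delta$-hull of equal $\drdim$-dimension, which excludes $\pdim$ and $\PhiMBdim$ since those take the maximal value on dense $G_\delta$ sets. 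If you want to salvage your constructive approach you must either prove these abstract facts yourself or genuinely compute $\PhiBdim A_S$; as written the argument is incomplete precisely at its hardest point.
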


\begin{proof}
We check that among the dimensions defined or mentioned in Section~\ref{s:prel}, only the Hausdorff dimension satisfies all the properties of the $D$-restricted Hausdorff dimensions obtained in Proposition~\ref{p:easydrdim}. Since among them only the Hausdorff dimension, the lower packing dimension, the packing dimension, and the modified upper $\Phi$-intermediate dimensions are $\sigma$-stable (\eqref{pD:4} of Proposition~\ref{p:easydrdim}), it is enough to consider those. But among those only the Hausdorff dimension satisfies \eqref{pD:5} of Proposition~\ref{p:easydrdim}, since the others take maximal value on dense $G_{\delta}$ sets.
\end{proof}

We saw in Proposition~\ref{p:easydrdim}
that every $D$-diameter restricted Hausdorff dimension is Lipschitz stable (property \eqref{i:lip} of the Introduction).  
The next result shows that among them only the Hausdorff dimension is H\"older stable (property \eqref{i:holder} of the Introduction). 
This shows that none of these dimensions are counterexamples to Question~\ref{q:BanajiFraser}.

\begin{theorem} Let $D\in \iD$. Assume that for all $\alpha\in (0,1]$, $\alpha$-H\"older functions $f\colon \R\to \R$ and compact sets $K\subset [0,1]$ we have 
\begin{equation}\label{e:Holderdim} 
\drdim f(K)\leq \frac{1}{\alpha} \drdim K.
\end{equation} 
Then $\drdim=\hdim$.
\end{theorem}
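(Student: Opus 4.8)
The plan is to argue by contraposition: assuming $D\in\iD$ is such that $\drdim\neq\hdim$, I will produce a compact set $K\subset[0,1]$, a parameter $\alpha\in(0,1]$, and an $\alpha$-H\"older map $f\colon\R\to\R$ that violate \eqref{e:Holderdim}. By Remark~\ref{r:wlogD} I may assume $D=\bigcup_{k}[2^{-n_k},2^{-n_k+1})$ for a strictly increasing sequence $(n_k)$ of positive integers, so that $\drdim$ is the Hausdorff dimension computed along the scale sequence $(2^{-n_k})$. The first step is to observe that $\drdim\neq\hdim$ forces the sequence $(n_k)$ to be ``lacunary'' in a quantitative sense: if $n_{k+1}/n_k\to 1$ then, for every $S\subset\N$, $\liminf_k M_S(n_k)=\liminf_k M_S(k)$ by a standard squeezing argument on the averages $M_S$, hence $\drdim A_S=\hdim A_S$ for all digit-restriction sets, and more care (or a direct covering argument comparing scales $2^{-k}$ and $2^{-n_k}$) shows $\drdim=\hdim$ on all of $\R^n$. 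So we may pass to a subsequence along which $n_{k+1}/n_k\ge c>1$ for some constant $c$.

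The second step is to exploit this lacunarity to build the counterexample via Theorem~\ref{t:ASDarboux} and the H\"older behaviour of the maps $x\mapsto x^{1/s}$-type parametrizations of digit-restriction sets. Concretely, I would fix a set $S\subset\N$ with $\hdim A_S<\pdim A_S$ (e.g. $S$ a union of long blocks with no density), so by Theorem~\ref{t:ASDarboux} the value $\drdim A_S=\beta$ for our particular $D$ satisfies $\beta=\liminf_k M_S(n_k)$, which, by choosing $S$ adapted to the lacunary sequence $(n_k)$, I can make strictly larger than $\hdim A_S=\liminf_k M_S(k)$. Now take $K=A_S\subset[0,1]$. The classical fact (used in the construction of H\"older images of self-similar-type Cantor sets) is that there is an $\alpha$-H\"older surjection from a small-dimensional Cantor set onto a larger-dimensional one; precisely, for digit-restriction sets $A_{S'}\to A_{S}$ one can write down an explicit monotone map that stretches the $k$-th generation intervals, and its H\"older exponent is governed by $\inf_j M_{S'}(j)/M_S(j)$ along the relevant scales. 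The key point is that because $D$ only ``sees'' the scales $n_k$, we can arrange a source set $K'=A_{S'}$ with $\drdim K'$ small (indeed $\liminf_k M_{S'}(n_k)$ tiny) while the map $f\colon K'\to K$ — extended to all of $\R$ by linear interpolation — is $\alpha$-H\"older with $\alpha$ bounded below by a constant depending only on the geometry of the generations between consecutive $n_k$'s. Then $\drdim f(K')=\drdim K=\beta$ while $\tfrac1\alpha\drdim K'$ can be made $<\beta$, contradicting \eqref{e:Holderdim}.

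An alternative, possibly cleaner route for the second step: instead of pushing a small set onto a large one, shrink $K=A_S$ by a H\"older homeomorphism. The map $g(x)=x^{1/\alpha}$ on $[0,1]$ is $\alpha$-H\"older, and its effect on $A_S$ is to replace the scale $2^{-j}$ roughly by $2^{-\alpha j}$; composing with the identification in Remark~\ref{r:wlogD} this has the effect of replacing the scale sequence $(n_k)$ by $(\alpha n_k)$ in the computation of $\drdim g(A_S)$ — more precisely $\drdim g(A_S)$ becomes $\liminf_k M_S(\lceil \alpha^{-1} n_k\rceil)$ type quantity — which for a suitably chosen lacunary $S$ differs from $\alpha\cdot\drdim A_S$ by more than allowed. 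I expect one of these two constructions to go through after a careful bookkeeping of which scales fall inside $D$.

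\textbf{Main obstacle.} The genuinely delicate point is the quantitative H\"older estimate on the interpolating map and its interaction with the scale set $D$: one must ensure simultaneously that (a) the H\"older exponent $\alpha$ does not degenerate to $0$ as one passes deeper into the construction (this is where lacunarity $n_{k+1}/n_k\ge c>1$ is essential — it bounds how much stretching happens between scales the covering is allowed to use), and (b) the dimension of the image, computed with the $D$-restricted content, genuinely equals the larger value $\beta$ rather than collapsing because of the intermediate scales introduced by $f$. Handling (b) rigorously requires re-running the mass-distribution argument of Claim~\ref{c:mass} for the image set, checking the bound $\mu(U)\le C|U|^s$ only for $|U|\in D$, which is exactly where the restricted nature of the dimension helps rather than hurts. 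I would expect the bulk of the work — and the main risk of a hidden error — to be in pinning down condition (a), i.e. the explicit lower bound for $\alpha$ in terms of the gaps $n_{k+1}-n_k$.
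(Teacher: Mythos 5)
Your high-level strategy is the same as the paper's: argue by contraposition, use Remark~\ref{r:wlogD} to reduce to a scale sequence $(n_k)$, dispose of the case $n_{k+1}/n_k\to 1$ by the replacement-of-covers argument, and in the lacunary case exhibit a H\"older surjection from a digit-restriction set of small $\drdim$ onto one of large $\drdim$. The first step is fine. But the second step --- which you yourself flag as ``the genuinely delicate point'' --- is where the whole proof lives, and your sketch does not contain the idea that makes it work. You need three things at once: $\drdim A_S$ bounded away from $0$, $\drdim A_{S'}$ small \emph{along the same scales} $n_k$, and a H\"older exponent $\alpha$ for the surjection $A_{S'}\to A_S$ close enough to $1$ that $\tfrac{1}{\alpha}\drdim A_{S'}<\drdim A_S$; lacunarity alone does not make these compatible. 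The paper's construction places $S$ in the \emph{upper half} of each lacunary gap, $S\supset\{\lceil(n_k+n_{k+1})/2\rceil,\dots,n_{k+1}-2\}$ for $k$ in the lacunary subsequence (padded with even integers elsewhere so that $M_S(i)\le 1/2$ for all $i$), which forces the efficient covering scale for $A_S$ to be the gap midpoint $m_k=\lceil(n_k+n_{k+1})/2\rceil$ --- a scale \emph{not} in $D$ --- giving $\drdim A_S=1/2$. The source set is then $A_T$ with $T=\phi_{p,q}(S)$, a \emph{uniform} linear index-stretch by $1/r$ with $r=p/q$ rational chosen in the two-sided window $\frac{1}{1+2\eps}<r<\frac{1+\eps}{1+2\eps}$, where $1+2\eps=\lim n_{k+1}/n_k$ along the subsequence: the upper bound on $r$ guarantees $\phi_{p,q}(m_k)>n_{k+1}$, so the allowed scale $n_{k+1}$ becomes efficient for $A_T$ and $\liminf_k M_T(n_{k+1})<r/2$, while the lower bound on $r$ is exactly what turns this into a violation of \eqref{e:Holderdim}. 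A separate (easier) case is needed when $\limsup_k n_{k+1}/n_k=\infty$. None of this bookkeeping --- the half-gap placement of $S$, the two-sided constraint on $r$, the case split --- appears in your proposal, and without it there is no argument that conditions (a) and (b) in your own ``main obstacle'' paragraph can be met simultaneously.

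Your alternative route via $g(x)=x^{1/\alpha}$ cannot be repaired: for $\alpha\in(0,1]$ the map $x\mapsto x^{1/\alpha}$ has derivative bounded by $1/\alpha$ on $[0,1]$, hence is Lipschitz, and by Proposition~\ref{p:easydrdim}\eqref{pD:1} every Lipschitz map satisfies $\drdim g(K)\le\drdim K\le\tfrac{1}{\alpha}\drdim K$, so no Lipschitz map can ever violate \eqref{e:Holderdim}; moreover a power map distorts $A_S$ nonuniformly (it is bi-Lipschitz away from the origin), so it does not globally replace the scale $2^{-j}$ by a fixed power of itself. Stick with the first route and supply the explicit $S$, $T$ and the exponent window.
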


\begin{proof}
By Remark~\ref{r:wlogD} we may assume that $n_k$ is a strictly increasing sequence of integers such that
\begin{equation*} 
D=\bigcup_{k=1}^{\infty} [2^{-n_k},2^{-n_k+1}).
\end{equation*}
First we claim that 
\begin{equation*} \drdim=\hdim \quad \text{ if } 
\quad \lim_{k\to \infty} \frac{n_{k+1}}{n_k}=1. 
\end{equation*}
Indeed, whenever a set $E_i$ from a cover of $E$ satisfies $|E_i|\in [2^{-n_{k+1}+1}, 2^{-n_k})$, just replace it with a set $E_i\subset \widetilde{E}_i$ for which $|\widetilde{E}_i|=2^{-n_k}$. 
Then it is easy to see that the new covers witness $\drdim E\leq \hdim E$, 
and so $\drdim E=\hdim E$ by Proposition~\ref{p:easydrdim}\eqref{pD:7}.

Therefore, we may assume that $\limsup_{k} \frac{n_{k+1}}{n_k}>1$. Choose $\iK\subset \N$ such that 
\begin{equation*}
\lim_{k\in \iK} \frac{n_{k+1}}{n_k}=\limsup_{k\to \infty} \frac{n_{k+1}}{n_k}>1.
\end{equation*}
Define $S\subset \N$ such that
\begin{equation*}  
S=\bigcup_{k\in \iK} \left\{ \left\lceil\frac{n_k+n_{k+1}}{2}\right \rceil,\dots, n_{k+1}-2\right\}\cup \bigcup_{k\in \N\setminus \iK} \left(2\N\cap \{n_k,\dots,n_{k+1}-1\}\right).
\end{equation*} 
Using the notation \eqref{e:MSdef} we obtain that 
\begin{equation} \label{e:MSi}
M_S(i)\leq \frac 12 \text{ for all $i\in \N$}. 
\end{equation} 
The definition of $A_S$ in \eqref{e:ASdef} and equation \eqref{e:drdimofAS} imply that 
\begin{equation} \label{e:dimAS}
\drdim A_S=\liminf_{k\to \infty} M_S(n_k)=\frac 12.
\end{equation} 
For positive integers $p<q$ define 
\begin{equation*} 
\varphi_{p,q}\colon \N\to\N, \quad \varphi_{p,q}(kp-m)=kq-m 
\end{equation*}
for all positive integers $k$ and integers $m$ with $0\leq m\leq p-1$. Observe that for all $i\in \N$ we have 
\begin{equation} \label{e:fbound} 
\frac{i-p}{r}<\varphi_{p,q}(i)\leq \frac ir, \text{ where } r=\frac pq.
\end{equation} 

We define the continuous onto map
\begin{equation*}
f_{p,q}\colon A_{\varphi_{p,q}(S)}\to A_S, \quad f_{p,q}\left(\sum_{i=1}^{\infty} x_{i}2^{-\varphi_{p,q}(i)}\right)=\sum_{i=1}^{\infty} x_{i}2^{-i}.
\end{equation*} 
It is not hard to check using the upper bound in \eqref{e:fbound} that $f_{p,q}$ is $\frac pq$-H\"older. Note that each function $f_{p,q}$ can be extended to $\R$ as a $\frac pq$-H\"older function by a well-known extension result \cite[Corollary~1]{Sh}, so $f_{p,q}$ need to satisfy \eqref{e:Holderdim}. However, it is easier for us to keep the original domains for the rest of the proof.

First assume that $\lim_{k\in \iK} \frac{n_{k+1}}{n_k}=\infty$. 
In this case let $T=\varphi_{1,2}(S)$ and note that $\varphi_{1,2}(n)=2n$. 
Then $f_{1,2}\colon A_T\to A_S$ is $\frac 12$-H\"older and onto. 
Since $T\cap [2n_k,n_{k+1}]=\emptyset$ for all $k\in \iK$, we easily obtain that 
\begin{equation*} 
\drdim A_T=\liminf_{k\to \infty} M_T(n_k)\leq \liminf_{k\in \iK} \frac{2n_k}{n_{k+1}}=0, 
\end{equation*}
which together with \eqref{e:dimAS} contradicts \eqref{e:Holderdim}.

Finally, assume that 
\begin{equation}\label{e:limnk}
   \lim_{k\in \iK} \frac{n_{k+1}}{n_k}=1+2\eps 
\end{equation}
 for some $0<\eps<\infty$. Let $r=\frac pq\in (0,1)$ be a rational number such that 
\begin{equation} \label{e:r} \frac{1}{1+2\eps}<r<\frac{1+\eps}{1+2\eps}.
\end{equation} 
Let $T=\varphi_{p,q}(S)$, then the map $f_{p,q}$ is $r$-H\"older and onto. The essence of the proof will be that the allowed diameters coming from $D$ are not optimal for covering $A_S$, it can be covered more efficiently with sets having diameter $2^{-m_k}$, where $m_k= \lceil(n_k+n_{k+1})/2\rceil$ and $k\in \iK$. Then $\varphi_{p,q}(m_k)\approx n_{k+1}$ will imply that the scale sequence $n_k$ works much better for the set $A_T$ than for $A_S$. This makes $\drdim A_T$ small enough to 
contradict the following special case of the assumption \eqref{e:Holderdim}:
\begin{equation} \label{e:ATHolder}
\drdim A_S\leq \frac 1r \drdim A_T.
\end{equation}
By \eqref{e:drdimofAS}, $\drdim A_T=
\liminf_{k\to \infty} M_T(n_k)$, so in order to get a contradiction with \eqref{e:ATHolder} and \eqref{e:dimAS}, it is enough to prove that 
\begin{equation} \label{e:AT} 
\liminf_{k\to \infty} M_T(n_k)<\frac r2.
\end{equation}

Assumption \eqref{e:limnk} yields 
\begin{equation} \label{e:nk+1} \lim_{k\in \iK} \frac{n_k+n_{k+1}}{2n_{k+1}}=\frac{1+\eps}{1+2\eps}.
\end{equation}
Thus for large enough $k\in \iK$ the lower bound in \eqref{e:fbound}, the upper bound of $r$ in \eqref{e:r}, and \eqref{e:nk+1} imply that
\begin{equation*} 
\varphi_{p,q}\left(\left\lceil \frac{n_k+n_{k+1}}{2}\right\rceil \right)>
\frac{\left\lceil \frac{n_k+n_{k+1}}{2}\right\rceil-p}{r}>
n_{k+1},
\end{equation*}
which means that 
\begin{equation*} \label{e:nk} \left(\varphi_{p,q}(n_k), n_{k+1}\right]\cap T=\emptyset.
\end{equation*} 
Hence \eqref{e:MSi} yields that 
\begin{align} \label{e:TS}
\begin{split}
\#\{T\cap \{1,\dots,n_{k+1}\}\}&\leq 
\#\{T\cap \{1,\dots,\varphi_{p,q}(n_k)\}\}\\
&=\#\{S\cap \{1,\dots,n_k\}\}\leq \frac{n_k}{2}. 
\end{split}  
\end{align}
Inequality \eqref{e:TS}, assumption \eqref{e:limnk} and the lower bound of $r$ in \eqref{e:r} imply that
\begin{equation*} \liminf_{k\to \infty} M_T(n_{k+1})\leq  \lim_{k\in \iK} \frac{n_k}{2n_{k+1}}<\frac r2,
\end{equation*}
thus \eqref{e:AT} holds. The proof of the theorem is complete.
\end{proof}

\section{The universal restricted Hausdorff dimension}
\label{s:urhd}

From the continuum many new dimensions we defined any studied in the previous section we can define a single one in the following natural way.

\begin{defi}
    The \emph{universal restricted Hausdorff dimension} is defined as
\begin{equation}\label{e:rdimsup}
    \rdim E = \sup\left\{\drdim E \colon D\in\iD\right\},
\end{equation}
or equivalently as
\begin{equation*}
  \rdim E = \inf \left\{ s : \iH^s_{D,\infty}(E)=0 \textrm{ for all $D\in \iD$} \right\}.   
\end{equation*}  
\end{defi}

Proposition~\ref{p:easydrdim} directly implies the following properties of this new dimension. 
 
\begin{prop}\label{p:easyrdim}
The universal restricted Hausdorff dimension $\rdim$ is
\begin{enumerate}[(i)]
\item \label{pR:1} Lipschitz-stable,
\item \label{pR:2} monotone, 
\item \label{pR:3} $\rdim S=\hdim S$ for any self-similar set $S$,
\item \label{pR:4} $\sigma$-stable and
\item \label{pR:7} $\hdim E\le \drdim E\le \rdim E\le \pdim E$ for any $E\su\R^n$ and $D\in\iD$.
\end{enumerate}
\end{prop}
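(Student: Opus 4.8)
The plan is to obtain all five items directly from Proposition~\ref{p:easydrdim}, using only the elementary fact that each of the properties in question passes to the pointwise supremum of a family of set functions. Recall that by definition $\rdim=\sup_{D\in\iD}\drdim$, so for a fixed configuration it suffices to combine the corresponding assertion of Proposition~\ref{p:easydrdim}, applied to every $D\in\iD$, with a passage to the supremum over $D$.

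First I would handle \eqref{pR:1}, \eqref{pR:2} and \eqref{pR:3}. For monotonicity, if $A\su B$ then $\drdim A\le\drdim B\le\rdim B$ for every $D\in\iD$ by Proposition~\ref{p:easydrdim}\eqref{pD:2}, hence $\rdim A=\sup_{D}\drdim A\le\rdim B$. For Lipschitz stability, if $f\colon\R^n\to\R^n$ is Lipschitz then Proposition~\ref{p:easydrdim}\eqref{pD:1} gives $\drdim f(A)\le\drdim A\le\rdim A$ for each fixed $D$; taking the supremum over $D$ on the left yields $\rdim f(A)\le\rdim A$. For \eqref{pR:3}, Proposition~\ref{p:easydrdim}\eqref{pD:3} gives $\drdim S=\hdim S$ for every self-similar set $S$, independently of $D$, so $\rdim S=\sup_D\drdim S=\hdim S$.

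Next I would prove the chain \eqref{pR:7}. The middle inequality $\drdim E\le\rdim E$ is immediate from the definition of $\rdim$ as a supremum over $\iD$, since $D\in\iD$. Since $\hdim E\le\drdim E\le\pdim E$ holds for every $D\in\iD$ by Proposition~\ref{p:easydrdim}\eqref{pD:7}, passing to the supremum over $D$ preserves both outer estimates and gives $\hdim E\le\rdim E\le\pdim E$; in particular $\rdim$ takes values in $[0,n]$ on subsets of $\R^n$. Finally, for $\sigma$-stability \eqref{pR:4}, the inequality $\sup_i\rdim A_i\le\rdim(\bigcup_i A_i)$ is a consequence of \eqref{pR:2}, and for the reverse inequality I would use the $\sigma$-stability of each $\drdim$ from Proposition~\ref{p:easydrdim}\eqref{pD:4} together with the interchange of the two suprema:
\[
\rdim\Big(\bigcup_{i\ge1} A_i\Big)=\sup_{D\in\iD}\drdim\Big(\bigcup_{i\ge1} A_i\Big)=\sup_{D\in\iD}\sup_{i\ge1}\drdim A_i=\sup_{i\ge1}\sup_{D\in\iD}\drdim A_i=\sup_{i\ge1}\rdim A_i.
\]

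I do not expect any genuine obstacle here: the whole content is that a supremum of a family of set functions inherits monotonicity, Lipschitz stability, $\sigma$-stability, agreement with $\hdim$ on self-similar sets, and the sandwich between $\hdim$ and $\pdim$. The only points deserving a moment's care are the legitimacy of swapping the two suprema in the proof of \eqref{pR:4} (which is simply that a supremum over a product indexing set equals either iterated supremum) and the fact that, in the Lipschitz-stability step, the inequality from Proposition~\ref{p:easydrdim}\eqref{pD:1} must be applied with the \emph{same} scale set $D$ on both sides before one passes to the supremum. This is why Proposition~\ref{p:easyrdim} is a direct consequence of Proposition~\ref{p:easydrdim}.
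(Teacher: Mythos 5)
Your proposal is correct and is exactly the argument the paper intends: the paper simply states that Proposition~\ref{p:easydrdim} directly implies these properties, and your write-up is the spelled-out version of that one-line deduction (passing each property through the supremum over $D\in\iD$, with the supremum swap for $\sigma$-stability). No gaps.
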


The next theorem gives a better lower bound for $\rdim E$ if $E\subset \RR^n$ is $\sigma$-compact. 

\begin{theorem} \label{t:lpacking} 
For any $\sigma$-compact set $E\subset \RR^n $ we have 
\begin{equation*} 
\lpdim E \leq \rdim E\leq \pdim E. 
\end{equation*}
\end{theorem}
\begin{proof} The upper bound holds for any $E\subset \RR^n$ by Proposition~\ref{p:easyrdim}~\eqref{pR:7}. Hence we only need to prove that $\rdim E\geq \lpdim E$. Since $E$ is $\sigma$-compact and both $\lpdim$ and $\rdim$ are countably stable, we may assume without loss of generality that $E$ is compact. We may also suppose that $\lpdim E>0$, and let us fix an arbitrary $s\in (0,\lpdim E)$. By Lemma~\ref{l:lp} we can choose a compact subset $K\subset E$ such that $\lpdim (K\cap V)>s$ for every open set $V\subset \RR^n$ which intersects $K$. Since $s$ is arbitrary, it is sufficient to prove that $\rdim K\geq s$, for which it is enough to define a scale set $D$ of the form
\begin{equation*}
D=\bigcup_{k=1}^\infty [2^{-n_k},2^{-n_k+1})
\end{equation*} 
satisfying $\iH^s_{D,\infty}(K)\geq 1$. 

Now we will construct our $D$ by defining the sequence of positive integers $n_k$. 
Since $\lbdim K>s$, using \eqref{e:boxdimwithMn} we can choose $n_1\in \NN$ such that $M_{n_1-2}(K)\geq \lceil 2^{sn_1} \rceil$. Let $N_1=\lceil 2^{sn_1} \rceil$ and let $\{x_1,\dots, x_{N_1}\}$ be a $2^{-n_1+2}$-packing of $K$. Assume by induction that the positive integers $n_j$ and $N_j=\lceil 2^{sn_j} \rceil$, and the points $x_{i_1\dots i_j}\in K$ for $i_1\in \{1,\dots,N_1\},\dots, i_j\in \{1,\dots,N_j\}$ are already defined for all $j\in \{1,\dots,k\}$. From now on all balls $B(x,r)$ are assumed to be within $K$. Then we define 
\begin{equation*}
n_{k+1}=\min\{n>n_k: M_{n-2}(B(x_{i_1\dots i_k},2^{-n_k-1}))\geq 2^{sn}~\forall i_1\leq N_1,\dots,i_k\leq N_k\}. 
\end{equation*} 
As $\lbdim B(x_{i_1\dots i_k},2^{-n_k-1})>s$ for all indices $i_1\leq N_1,\dots, i_k\leq N_k$, we obtain that $n_{k+1}$ is finite. Let $N_{k+1}=\lceil 2^{sn_{k+1}} \rceil$, and for all indices $i_1\leq N_1,\dots,i_k\leq N_k$ let us choose $x_{i_1\dots i_k i_{k+1}}\in B(x_{i_1\dots i_k},2^{-n_k-1})$ for every $i_{k+1}\in \{1,\dots,N_{k+1}\}$ such that $\{x_{i_1\dots i_k i_{k+1}}: 1\leq i_{k+1}\leq N_{k+1}\}$ forms a $2^{-n_{k+1}+2}$-packing. 
This defines the sequence $n_k$ and so $D$ as well. From the construction for all $k\geq 2$ and indices $i_1,\dots,i_k$ we have  
\begin{equation} \label{eq:B}
B(x_{i_1\dots i_k}, 2^{-n_k})\subset B(x_{i_1\dots i_{k-1}}, 2^{-n_{k-1}}). 
\end{equation} 
Also, for any $k\in \NN$ and indices $i_1\leq N_1,\dots, i_{k-1}\leq N_{k-1}$ and $V\subset \RR^n$ we obtain the implication 
\begin{equation} \label{eq:index}
|V|<2^{-n_k+1}~\Rightarrow~\text{there is at most one } i_k \text{ with } B(x_{i_1\dots i_k},2^{-n_k})\cap V\neq \emptyset. 
\end{equation}

Finally, we will prove that $\iH^s_{D,\infty}(K)\geq 1$. Assume to the contrary that there is a cover $K\subset \bigcup_{i=1}^{\infty} V_i$ such that $|V_i|\in D$ for each $i$ and $\sum_i |V_i|^s<1$. We may assume that all sets $V_i$ are open, and by the compactness of $K$ we can choose a finite cover $K\subset \bigcup_{i=1}^{N} V_i$. Hence there exists $\ell\in \NN$ such that 
\begin{equation*} \{|V_i|: i\leq N\}\subset \bigcup_{k=1}^{\ell}  [2^{-n_k},2^{-n_k+1}).
\end{equation*}
For all $k\in\{1,\dots, \ell\}$ let 
\begin{equation*} \iF_k=\{V_i: |V_i|\in [2^{-n_k},2^{-n_k+1})\}. 
\end{equation*} 
By \eqref{eq:index} for all $k\in \{1,\dots,\ell\}$ and indices $i_1\in \{1,\dots N_1\},\dots,i_{k-1}\in \{1,\dots, N_{k-1}\}$ we have the implication  
\begin{equation} \label{eq:impl} \#\iF_k<N_k~\Longrightarrow~\exists i_k\in \{1,\dots, N_k\} \text{ with } B(x_{i_1\dots i_k},2^{-n_k})\cap \left(\bigcup \iF_k\right)=\emptyset. 
\end{equation}
Now there are two cases. If there exists a $k\in \{1,\dots,\ell\}$ such that $\# \iF_k\geq N_k$, then clearly \begin{equation*} 
\sum_{V_i\in \iF_k}|V_i|^s\geq 2^{-sn_k}N_k\geq 2^{-sn_k} \cdot 2^{sn_k}=1,
\end{equation*}
which contradicts our assumption $\sum_i |V_i|^s<1$. Finally, assume that $\# \iF_k<N_k$ for all $k\in \{1,\dots,\ell\}$. Applying \eqref{eq:impl} $\ell$ times we can inductively define indices $i_1\in\{1,\dots,N_1\},\dots,i_{\ell}\in \{1,\dots, N_{\ell}\}$ such that for all $k\in \{1,\dots,\ell\}$ we have 
\begin{equation} \label{eq:ii}
B(x_{i_1\dots i_k},2^{-n_k})\cap \left(\bigcup \iF_k\right)=\emptyset. 
\end{equation}
Then \eqref{eq:ii} and \eqref{eq:B} imply that $B(x_{i_1\dots i_{\ell}},2^{-n_{\ell}})\cap V_i=\emptyset$ for all $1\leq i\leq N$, which contradicts the assumption $K\subset \bigcup_{i=1}^{N} V_i$. This completes the proof. 
\end{proof}

\begin{cor} \label{c:dim}
Assume that $E\subset \RR^n$ is a $\sigma$-compact set with $\lpdim E =\pdim E$. Then $\rdim E=\pdim E$.
\end{cor}

\begin{remark} For self-similar sets $K\subset \RR^n$ it is known that 
$\hdim K=\bdim K$, so $\rdim K=\hdim K$. We show that the situation is different even for self-affine sets. Assume that $M\subset [0,1]^2$ is a Bedford-McMullen carpet with a defining pattern $P\subset \{0,\dots,n-1\}\times \{0,\dots, m-1\}$ and $n>m$. McMullen \cite{Mc} proved that $\bdim M$ exists, and $\lpdim M=\bdim M=\pdim M$ easily follows from Lemma~\ref{l:lp}. McMullen \cite{Mc} also proved that $\hdim M<\pdim M$ whenever $P$ has non-uniform horizontal fibers. Hence Corollary~\ref{c:dim} yields $\rdim M>\hdim M$ in these cases.
\end{remark}

Theorem~\ref{t:ASDarboux} and \eqref{e:pdimAS}--\eqref{e:hdimAS} show that $\rdim K>\lbdim K=\lpdim K=\hdim K$ is even possible for `uniformly branching sets' $K$. The next theorem states that $\rdim$ does not coincide with the packing dimension, or more generally, with the modified upper $\Phi$-intermediate dimensions. 

\begin{theorem}\label{t:rdimnotpdim}
Let $n$ be a positive integer and let $\Phi$ be an arbitrary admissible function. Then there exists a compact set $K\su\R^n$ for which 
\begin{equation*} 
\rdim K=0 \quad \text{and} \quad  \PhiMBdim K=n.
\end{equation*} 
Consequently, $\pdim K=n$.
\end{theorem}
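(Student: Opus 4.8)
The plan is to construct $K\su[0,1]^n$ explicitly, depending on $\Phi$, as a nested intersection $K=\bigcap_{m\ge0}K_m$ where each $K_m$ is a finite union of dyadic cubes of side $2^{-m}$, governed by a \emph{position-dependent} branching rule: at each level a surviving cube either \emph{splits} into all $2^n$ dyadic children or \emph{waits} (keeps a single child), and which happens depends on the cube, not just on the level. Fix a very rapidly increasing sequence of levels $a_1<a_2<\cdots$ (fast enough that for each $j$ the band of diameters $[\Phi(2^{-a_j}),2^{-a_j}]$ available to a $\Phi$-cover at scale $2^{-a_j}$ corresponds, in exponent, to levels lying inside a single long splitting stretch), partition each block $[a_j,a_{j+1})$ into subblocks, and let every surviving cube do all its ``waiting'' inside the subblocks prescribed by its address, arranging: (i) as $j\to\infty$ every finite pattern of subblock-choices is realized along some branch, so that no prescribed sequence of scales can avoid the waiting stretches of all branches; and (ii) at every level the proportion of surviving cubes that are currently splitting tends to $1$, so that for every construction cube $Q$ one has $N(K\cap Q,2^{-a_j})\ge 2^{(n-\eps_j)(a_j-\ell(Q))}$ with $\eps_j\to0$, where $\ell(Q)$ is the level of $Q$.

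\emph{$\PhiMBdim K=n$.} Since $K$ is closed, by Lemma~\ref{l:Baire} it suffices to show $\PhiBdim(K\cap U)=n$ for every bounded open $U$ meeting $K$; each such $U$ contains a construction cube $Q$, so it is enough to bound $\PhiBdim(K\cap Q)$ from below. Put the uniform mass distribution $\mu_Q$ on $K\cap Q$ (equidistributing mass over the surviving cubes at every level). Because the splitting stretches are long and the $\Phi$-windows cannot leave them, for each admissible $\delta=2^{-a_j}$ every cover of $K\cap Q$ by sets of diameters in $[\Phi(\delta),\delta]$ is comparable, scale by scale, to a cover by dyadic cubes of sides in $[\Phi(\delta),\delta]$; the bound $\mu_Q(V)\le C|V|^{\,n-\eps_j}$ for such cubes $V$ (property (ii) quantified) then forces $\sum_i|U_i|^{\,n-\eps_j}\gtrsim 1$ for all large $j$. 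Hence $\PhiBdim(K\cap Q)\ge n-\eps_j$ for every $j$, so $\PhiBdim(K\cap Q)=n$, and therefore $\PhiMBdim K=n$. Finally $n=\PhiMBdim K\le\pdim K\le n$ by Claim~\ref{c:ineq}, so $\pdim K=n$.

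\emph{$\rdim K=0$.} Fix $D\in\iD$; by Remark~\ref{r:wlogD} we may take $D=\bigcup_k[2^{-n_k},2^{-n_k+1})$ with $(n_k)$ strictly increasing. Fix $s>0$ and $\eps>0$; we must produce a cover of $K$ by sets of diameters in $D$ whose $s$-th powers sum to $<\eps$. Discarding finitely many $n_k$ (which only helps, as $\sum_k 2^{-sn_k}<\infty$), descend the construction tree and \emph{harvest} a cube $Q$ — put into the cover a single set of diameter $|Q|=2^{-\ell(Q)}\in D$ containing $K\cap Q$ — the first time $\ell(Q)\in\{n_k\}$ and $Q$ is, at that level, inside one of its waiting subblocks. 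By (i) every branch of $K$ is eventually harvested, so the harvested cubes form an antichain covering $K$; and because a cube is harvested only while still waiting (hence before the steep growth of its descendant count has begun), a counting of the harvested cubes, controlled by the smallness of the waiting proportion at each level, yields $\sum_{Q\ \mathrm{harvested}}|Q|^{\,s}<\eps$. Thus $\iH^s_{D,\infty}(K)=0$ for every $s>0$, i.e.\ $\drdim K=0$; as $D\in\iD$ was arbitrary, $\rdim K=0$.

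The crux — where essentially all the difficulty lies — is the combinatorial design of the waiting subblocks: making them dense enough, in a tree sense, that \emph{every} sequence of scales $D$ meets the waiting stretch of \emph{every} branch (needed for $\rdim K=0$, and in tension with $\pdim K=n$), while keeping, at the harvesting scales, the surviving cubes numerous and uniformly spread so that $\mu_Q$ retains Frostman exponent close to $n$ for every $Q$ (needed for $\PhiMBdim K=n$). Matching the two resulting quantitative bounds — the mass-distribution lower bound on $\PhiBdim$ and the covering upper bound on $\drdim$ — with one and the same choice of the parameters $a_j$, $\eps_j$, the subblock lengths and the harvesting rule is the heart of the proof; the surrounding estimates run parallel to those behind Claims~\ref{c:mass} and~\ref{c:dimofAS} and Lemma~\ref{l:Baire}.
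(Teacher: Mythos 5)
Your high-level plan for the lower bound --- Frostman-type measures on pieces of $K$ inside every open set, Banaji's mass distribution lemma, and the upgrade to $\PhiMBdim$ via Lemma~\ref{l:Baire} --- matches the paper's, but the architecture you propose for $K$ cannot deliver $\rdim K=0$, and the combinatorial design you defer to (``the crux --- where essentially all the difficulty lies'') is not merely missing but impossible in the form you require. The problem is the harvesting argument. Your cover consists only of harvested cubes, so it covers $K$ only if \emph{every} branch is inside a waiting stretch at some scale $n_k$ of the given $D$. Since $D\in\iD$ may be built from an arbitrary infinite set of levels, this would have to hold for every infinite $\{n_k\}\su\N$; taking $\{n_k\}$ to be the complement of the set of waiting levels of one fixed branch shows that every branch would have to be waiting at all but finitely many levels, which makes every point of $K$ isolated and hence $K$ finite. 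Your condition (i) (``every finite pattern of subblock-choices is realized along some branch, so that no prescribed sequence of scales can avoid the waiting stretches of all branches'') has the quantifiers the wrong way round: it guarantees that \emph{some} branch is caught by the scales, whereas the harvested antichain covers $K$ only if \emph{all} branches are caught. The only escape is to also cover the non-waiting cubes by sets of diameter in $D$ that merely dominate their diameters; but your condition (ii), that the proportion of splitting cubes tends to $1$ at every level, forces the number of surviving cubes at level $m$ to be of order $2^{(n-o(1))m}$ globally, so that this fallback costs on the order of $2^{(n-o(1))n_k}2^{-sn_k}$, which does not tend to $0$ for any $s<n$. Thus (i) and (ii) as stated are genuinely incompatible with $\rdim K=0$, not merely ``in tension''.

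The paper resolves the tension by doing nearly the opposite of your (ii): at stage $k$ exactly \emph{one} construction cube $Q^{n_k}_{j_k}$ splits fully, down to level $m_k$ with $2^{-m_k}<\Phi(2^{-kn_k})$ so that the entire $\Phi$-window at scale $2^{-kn_k}$ lies inside its splitting range, while every other cube collapses to a single descendant at the next checkpoint level $n_{k+1}=(k+1)m_k$. This keeps the total number $\ell_k$ of surviving level-$n_k$ cubes sub-exponential in $n_k$, so for an arbitrary $D$ one chooses $r_2\in D\cap[\sqrt{n}\,2^{-n_{k+1}},\sqrt{n}\,2^{-n_k}]$ (these intervals cover a punctured neighbourhood of $0$, so every $D$ meets infinitely many of them), covers the single special cube by one set of some small diameter $r_1\in D$, and covers the remaining collapsed cubes by fewer than $\ell_k$ sets of diameter $r_2$, at total cost $r_1^s+\ell_k r_2^s\to 0$. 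Full $\PhiMBdim$ is then recovered not from a large splitting proportion but by making the single splitting cube migrate through an enumeration of all dyadic cubes, so that $\PhiBdim(K\cap U)=n$ for every open $U$ meeting $K$, and Lemma~\ref{l:Baire} finishes. To salvage your write-up you would need to replace the ``splitting proportion tends to $1$'' design by this single-migrating-splitting-cube design.
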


\begin{proof} 
Assume that $\{Q_i\}_{i\geq 1}$ is an enumeration of the closed dyadic subcubes of $[0,1]^n$ such that $|Q_i|\leq |Q_j|$ for all $i>j$. We say that a dyadic cube is of \emph{level $i$} if it has edge length $2^{-i}$. Let us define strictly increasing sequences of integers $n_k,m_k$ by induction. Let $n_1=0$ and if $n_k$ is already defined then let $m_k$ be the smallest integer such that 
\begin{equation} \label{eq:Phi} 
2^{-m_k}< \Phi\left(2^{-kn_k}\right), 
\end{equation}
and let $n_{k+1}=(k+1)m_k$. This implies that $m_1>0$, and defines the sequences $n_k,m_k$. Note that $n_k<m_k<n_{k+1}$ for each $k$ by the definition.

Before giving the detailed construction of our compact set $K\subset [0,1]^n$, we sketch how it looks like. We can assign an infinite tree $T(K)$ to $K$ such that its vertices are the dyadic cubes of $[0,1]^n$ which intersect $K$, and a dyadic cube $Q$ of level $m$ is the parent of another dyadic cube $Q'$ if and only if $Q'$ is of level $m+1$ and $Q'\subset Q$. We can describe $T(K)$ as follows. For each $k$ there will be exactly one vertex in $T(K)$ at level $n_k$ with the maximal, $2^{n(m_k-n_k)}$ many descendants of level $m_k$; let us call them exceptional vertices. This defines the maximal, $2^n$ many children for some vertices; any other vertex in our construction will have just $1$ offspring. Then $T(K)$ is sparse enough to guarantee $\rdim K=0$. We also arrange the exceptional vertices such that every infinite branch of $T(K)$ contains infinitely many of them; this will imply that $\PhiMBdim K=n$. 

Now we turn to the precise construction of our compact set $K\subset \R^n$. 
Let $\ell_1=1$, $q_0=0$, and $C_1=Q^0_1=[0,1]^n$. Assume by induction that $\ell_k,q_{k-1}\geq 0$ and $C_k=\bigcup_{i=1}^{\ell_k} Q^{n_k}_i$ are already defined such that $Q^{n_k}_i$ are different dyadic cubes of level $n_k$. Let
\begin{equation*}
q_{k}=\min\left\{i>q_{k-1}: \text{there exists } j\in \{1,\dots,\ell_k\} \text{ such that } Q^{n_k}_j\subset Q_i\right\}.
\end{equation*}
As $\{Q_i: i>q_{k-1}\}$ contains all the $n_k$th level cubes, $q_k$ is well-defined. Choose an arbitrary $j_k\in \{1,\dots,\ell_k\}$ such that $Q^{n_k}_{j_k}\subset Q_{q_{k}}$. Let
\begin{equation*} 
\ell_{k+1}=\ell_k-1+2^{n(m_k-n_k)}.
\end{equation*}
For all $i\in \{1,\dots,\ell_k\}\setminus \{j_k\}$ choose one $n_{k+1}$th level dyadic subcube of $Q^{n_k}_i$, and for $j_k$ pick all the $2^{n(m_k-n_k)}$ many $m_k$th level dyadic subcubes of $Q^{n_k}_{j_k}$, and choose exactly one $n_{k+1}$th level dyadic subcube of each of them. We obtained $\ell_{k+1}$ dyadic cubes of level $n_{k+1}$, let us enumerate them as $\{Q^{n_{k+1}}_i\}_{1\leq i\leq \ell_{k+1}}$. Since any $m_k$th level cube contains at most $1$ subcube $Q^{n_{k+1}}_i$, using $n_{k+1}=(k+1)m_k$ we obtain 
\begin{equation} \label{ellk}
\ell_{k+1}\leq 2^{m_k}=2^{\frac{n_{k+1}}{k+1}}.
\end{equation}
Let us define 
\begin{equation*}
C_{k+1}=\bigcup_{i=1}^{\ell_{k+1}} Q^{n_{k+1}}_i,
\end{equation*}
clearly $C_{k+1}\subset C_k$ for all $k\geq 1$. Let 
\begin{equation*}
K=\bigcap_{k=1}^{\infty} C_k.
\end{equation*} 

First we prove that $\rdim K=0$. Assume that a scale set $D\in \iD$ is given. Choose a small $r_1\in D$ and large $k$ such that $\sqrt{n} 2^{-n_k} \leq r_1$ and $D\cap [\sqrt{n}2^{-n_{k+1}},\sqrt{n}2^{-n_k}]\neq \emptyset$, and choose $r_2\in D\cap [\sqrt{n}2^{-n_{k+1}},\sqrt{n}2^{-n_k}]$. Then $Q^{n_k}_{j_k}$ has diameter at most $r_1$, and $C_k\setminus Q^{n_k}_{j_k}$ is a union of less than $\ell_k$ dyadic cubes of level $n_{k+1}$. Therefore, they can be covered by $\ell_k$ sets of diameter 
$r_2\leq \sqrt{n}2^{-n_k}$. For any $s>0$ by \eqref{ellk} we obtain 
\begin{equation*}
\iH^s_{D,\infty}(K)\leq r_1^s+\ell_k r_2^s\leq r_1^s+n^{\frac s2}2^{-n_k\left(s-\frac 1k\right)}
\end{equation*} 
which can be arbitrarily small if $r_1$ is small enough and $k\to \infty$. This shows that $\drdim K=0$.

Finally, let $\Phi$ be given, we need to prove that $\PhiMBdim K=n$, then $\pdim K=n$ will simply follow from Claim~\ref{c:ineq}. Assume that $U\subset \RR^n$ is an arbitrarily fixed open set such that $U\cap K\neq \emptyset$. By Lemma~\ref{l:Baire} it is enough to prove that $\PhiBdim(K\cap U)=n$. 
By construction, for every $k\ge 2$ and $1\le i\le \ell_k$ the
cube $Q^{n_k}_i$ is contained in a cube $Q^{n_{k-1}}_{i'}$ and contains a cube $Q^{n_{k+1}}_{i''}$ for some $i'$ and $i''$. 
Using this and the assumption that the sequence of $|Q_i|$ is non-increasing, the definition of $q_k$ yields that 
\begin{equation*} 
\{Q_{q_k}: k\geq 0\}=\{Q_j: \text{ there exist } k\geq 1 \text{ and } 1\leq i\leq \ell_k \text{ such that } Q^{n_k}_i\subset Q_j\}.
\end{equation*} 
Therefore, there exists an infinite set $I\subset \NN$ such that $Q^{n_k}_{j_k}\subset U$ for each $k\in I$. 
For $k\in I$ assume that $N_k=2^{n(m_k-n_k)}$ and $\{P_i\}_{1\leq i\leq N_k}$ is the set of $m_k$th level subcubes of $Q^{n_k}_{j_k}$. By the construction we can choose points $x_i\in P_i\cap K$ for all $1\leq i\leq N_k$. Define the Borel probability measure $\mu_k$ supported within $U\cap K$ as  
\begin{equation*} 
\mu_k=\frac{1}{N_k}\sum_{i=1}^{N_k}   \delta_{x_i},
\end{equation*} 
where $\delta_{x}$ is the Dirac measure concentrated on $\{x\}$.
Let $\eps_k=2^{-kn_k}$ for each $k\in I$, and fix $k\in I$.
By \cite[Lemma~5.1\,(i)]{Ban} it is enough to prove that there is a finite constant $C_n$ such that every Borel set $B\subset K\cap U$ with $\Phi(\eps_k) \leq |B| \leq \eps_k$ satisfies 
\begin{equation} \label{eq:ok1}
\mu_k(B)\leq C_n |B|^{n-o_k(1)}, 
\end{equation}
where $o_k(1)\to 0$ as $k\to \infty$. Let us fix $B$ as above. The definition of $\eps_k$ and \eqref{eq:Phi} imply that $2^{-m_k}<|B|\leq 2^{-kn_k}$, so there exists a unique integer $p$ such that $kn_k+1\leq p\leq m_k$ and 
\begin{equation} \label{eq:p} 
2^{-p}<|B|\leq 2^{1-p}.
\end{equation} 
Clearly, there is a finite constant $C_n$ such that $B$ can cover at most $C_n 2^{n(m_k-p)}$ points $x_i$. 
Therefore, using $n_k\leq \frac pk$ and \eqref{eq:p} we obtain that  
\begin{equation*} \mu_k(B)\leq \frac{1}{N_k}  C_n 2^{n(m_k-p)}=C_n 2^{n(n_k-p)}\leq C_n 2^{-pn\left(1-\frac 1k\right)}\leq C_n |B|^{n\left(1-\frac 1k\right)}.
\end{equation*}
Hence \eqref{eq:ok1} holds, which implies that $\PhiMBdim K=n$. The proof of the theorem is complete. 
\end{proof}

It is natural to ask whether $\sup$ can be replaced by $\max$ in the definition of $\rdim$. The next proposition shows that there are compact counterexamples of the form \eqref{d:ASi}.

\begin{prop} \label{p:rdim}
There exists a compact set $K\su\R$ of the form \eqref{d:ASi} for which $\drdim K<\rdim K$ for all $D\in\iD$.
\end{prop}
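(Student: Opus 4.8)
The plan is to build $K = A(\{S_i : i \ge 1\})$ as in \eqref{d:ASi}, choosing the digit sets $S_i$ so that each individual piece $A_{S_i}$ has packing dimension tending to $1$, yet no single scale sequence $D$ can simultaneously witness large $D$-diameter restricted dimension for all of them. By \eqref{e:dimASi} we have $\drdim K = \sup_i \liminf_{k\to\infty} M_{S_i}(n_k)$, while by Theorem~\ref{t:ASDarboux} and \eqref{e:pdimAS}, $\rdim K \ge \rdim A_{S_i} = \pdim A_{S_i} = \limsup_k M_{S_i}(k)$, which we will arrange to equal (or approach) $1$. So the inequality $\rdim K = 1$ will be easy; the real content is to guarantee $\drdim K < 1$ for every fixed $D$.

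\textbf{Construction.} I would use disjoint ``blocks'' of scales, one family of blocks dedicated to each index $i$, placed far out on widely separated ranges of $\N$, with the density of $S_i$ inside its own blocks close to $1$ but density $0$ (or very small) everywhere else. Concretely, take a rapidly growing sequence of integers (say defined by a tower-type recursion $L_{i,m}$) and let $S_i$ consist, on the $m$-th block $\{L_{i,m}+1,\dots,L_{i,m}'\}$ with $L_{i,m}'/L_{i,m}$ huge, of all of $\{L_{i,m}+1,\dots,L_{i,m}'\}$, and nothing between consecutive blocks. Then $\limsup_k M_{S_i}(k) \to 1$ as we let the block ratio grow, giving $\pdim A_{S_i}\to 1$ and hence $\rdim K = 1$. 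The key geometric feature I want is that the blocks for different indices $i$ are \emph{interleaved and nested} so that no scale $n_k$ can lie inside a ``full'' block of $S_i$ for infinitely many $i$ simultaneously at high density along a single subsequence — i.e.\ for every fixed sequence $(n_k)$ and every fixed $i$, the values $n_k$ that fall in blocks of $S_i$ are forced (because the complementary gaps are so long) to have $\liminf_k M_{S_i}(n_k)$ bounded away from $1$, in fact the $\liminf$ over $k$ of $M_{S_i}(n_k)$ can be made $0$ for the $i$ whose blocks the tail of $(n_k)$ ``skips.''

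\textbf{Why $\drdim K < 1$ for each $D$.} Fix $D$, and by Remark~\ref{r:wlogD} write $D = \bigcup_k [2^{-n_k}, 2^{-n_k+1})$ with $(n_k)$ strictly increasing. For each fixed $i$, I claim $\liminf_{k\to\infty} M_{S_i}(n_k) \le c$ for a constant $c < 1$ independent of $i$ (or even $\to 0$). The point: since $S_i$ alternates between a full block and an extremely long empty gap, $M_{S_i}(j)$ drops back near $0$ at the right end of each gap; and because the gaps grow faster than any fixed geometric rate while blocks are comparatively controlled, for \emph{any} increasing integer sequence $(n_k)$, infinitely many $n_k$ must land in (or just past) these long gaps — so $M_{S_i}(n_k)$ is small infinitely often. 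Making this quantitative is the main obstacle: I need to choose the growth of the block endpoints so aggressively that the ``escape to a gap'' argument works against an arbitrary $(n_k)$, uniformly in $i$. This is a pigeonhole/counting estimate: in the range between the end of one of $S_i$'s full blocks and the start of the next, $\#(S_i \cap \{1,\dots,j\})$ is essentially constant while $j$ grows by a huge factor, so $M_{S_i}(j) \to 0$ there; the only way $(n_k)$ avoids all such ranges from some point on is to be trapped inside one $S_i$-block forever, which it cannot be since each block is finite and for $i' \ne i$ that block sits inside a gap of $S_{i'}$. Taking the supremum over $i$ of these uniformly-bounded-below-$1$ lim-infs still stays below $1$, giving $\drdim K \le c < 1 = \rdim K$.

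\textbf{Assembly.} So the write-up is: (1) define $S_i$ via the block construction with explicit fast-growing endpoints; (2) verify $\limsup_k M_{S_i}(k) \to 1$, hence by Theorem~\ref{t:ASDarboux}, \eqref{e:pdimAS} and Proposition~\ref{p:easyrdim}\eqref{pR:7} that $\rdim K = 1$; (3) fix arbitrary $D$, reduce to $(n_k)$ via Remark~\ref{r:wlogD}, and prove the uniform bound $\liminf_k M_{S_i}(n_k) \le c < 1$ for all $i$ by the gap-escape counting argument; (4) conclude via \eqref{e:dimASi} that $\drdim K \le c < 1 = \rdim K$. I expect step (3) — choosing the interleaving and growth rates of the blocks so that the gap-escape estimate is valid against \emph{every} scale sequence and uniformly in $i$ — to be the crux, and it is essentially a matter of a careful recursive choice of parameters mirroring, but strengthening, the factorial-scale trick used in the proof of the preceding proposition.
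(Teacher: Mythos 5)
Your plan is essentially the paper's proof: the paper realizes your interleaved-block construction concretely with $S_i=\bigcup_{j}\{(p_i^j)!,\dots,i\,((p_i^j)!)\}$ ($p_i$ the $i$th odd prime), so that the enveloping blocks $P_i\supset S_i$ are pairwise disjoint and, for any fixed scale sequence $(n_k)$, at most one index $i_0$ can satisfy $n_k\in P_{i_0}$ for all but finitely many $k$; every other index then has $\liminf_k M_{S_i}(n_k)=0$ by exactly your gap-escape argument, while the exceptional one is capped by $\limsup_k M_{S_{i_0}}(k)=\frac{i_0-1}{i_0}<1$. This last point is the precise form of your ``constant $c<1$'': it depends on $D$ (through $i_0$) but not on $i$, which is all that the conclusion $\drdim K<1=\rdim K$ requires.
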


\begin{proof}
Let $K=A(\{S_i: i\geq 1 \})$ where the sets $S_i$ are defined as follows. Let $p_i$ be the $i$th odd prime number, and for all $i\geq 1$ define 
\begin{equation*} 
S_i=\bigcup_{j=1}^{\infty} \left\{(p_i^j)!,\dots, i\big((p_i^j)!\big)\right\} \quad \textrm{and} \quad 
P_i=\bigcup_{j=1}^{\infty} \left\{(p_i^j)!,\dots,(p_i^j+1)!-1\right\}.
\end{equation*}
It is easy to see that $\limsup_k M_{S_i}(k)=\frac{i-1}{i}$. Therefore the $\sigma$-stability of $\rdim$, Theorems~\ref{t:ASDarboux} and \eqref{e:pdimAS} imply
\begin{equation} \label{e:rdK} 
\rdim K=\sup_{i\geq 1} \rdim A_{S_i}
=\sup_{i\geq 1} \pdim A_{S_i} 
=\sup_{i\geq 1} \limsup_{k\to \infty} M_{S_i}(k)=1.
\end{equation}
Fix $D\in \iD$. By Remark~\ref{r:wlogD} we may again assume that
\begin{equation*}
D=\bigcup_{k=1}^\infty [2^{-n_k},2^{-n_k+1}),
\end{equation*}
where $n_k$ is a strictly increasing sequence of positive integers. Clearly $S_i\subset P_i$ for every $i$, and $P_i\cap P_j=\emptyset$ whenever $i\neq j$. Thus, there is at most one index $i$ such that $n_k\in P_i$ for all but finitely many numbers $k$; let it be $i_0$ if there exists such an index, otherwise fix an arbitrary $i_0$. It is easy to see that for all $i\in \N\setminus\{i_0\}$ we have 
\begin{equation} \label{e:densM} 
\liminf_{k\to \infty} M_{S_i}(n_k)=0 \quad \textrm{and} \quad  \liminf_{k\to \infty} M_{S_{i_0}}(n_k)\leq \frac{i_0-1}{i_0}<1.
\end{equation} 
Then \eqref{e:dimASi}, \eqref{e:densM}, and \eqref{e:rdK} imply that 
\begin{equation*} 
\drdim K=\sup_{i\geq 1} \liminf_{k\to\infty} M_{S_i}(n_k)\leq \frac{i_0-1}{i_0}<1=\rdim K,
\end{equation*}
and the proof is complete.
\end{proof}

The following lemma is surely known but for the sake of completeness we give a short proof.

\begin{lemma}\label{l:balls}
Let $0<\eps\le\delta/2$. In $\R^n$ every open ball of diameter $\delta$ contains  $\left\lceil\left(\frac{\delta}{4\eps}\right)^n\right\rceil$ pairwise disjoint closed balls of diameter $\eps$.
\end{lemma}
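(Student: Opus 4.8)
The plan is to avoid any explicit lattice construction: a cubic grid of mesh $\eps$ inscribed in the ball wastes a factor of order $\sqrt n$ through the diagonal of the cube, which is too lossy once $n$ is large. Instead I will use a maximal packing together with an elementary volume comparison. First I would dispose of the trivial range: if $\eps>\delta/4$ then $(\delta/(4\eps))^n<1$, so $\lceil(\delta/(4\eps))^n\rceil=1$, and since $\eps\le\delta/2$ the closed ball of diameter $\eps$ concentric with the given open ball of diameter $\delta$ lies strictly inside it. Hence we may assume $\eps\le\delta/4$ from now on.

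Write the given open ball as $U$, with centre $c$ and radius $R=\delta/2$; note $R\ge 2\eps>0$. Let $\omega_n$ denote the volume of the unit ball in $\R^n$. Among all finite families of pairwise disjoint closed balls of radius $\eps/2$ whose centres lie in the closed ball $\overline B(c,R-\eps)$, pick one of maximal cardinality, say $\{\overline B(x_i,\eps/2)\}_{i=1}^N$. Such a family exists and is finite: if $|x-c|\le R-\eps$ and $|y-x|\le\eps/2$ then $|y-c|\le R-\eps/2<R$, so each of these balls lies in $U$, and disjointness forces $N\omega_n(\eps/2)^n\le\omega_n R^n$; moreover $N\ge 1$ since the centre $c$ is itself admissible.

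Now I would exploit maximality. For any $y$ with $|y-c|\le R-\eps$, the closed ball $\overline B(y,\eps/2)$ must meet some $\overline B(x_i,\eps/2)$ (otherwise it could be adjoined to the family), and meeting forces $|y-x_i|\le\eps$; hence $\overline B(c,R-\eps)\subseteq\bigcup_{i=1}^N\overline B(x_i,\eps)$. Comparing $n$-dimensional volumes gives $(R-\eps)^n\le N\eps^n$, i.e.\ $N\ge(R/\eps-1)^n$. Since $\eps\le\delta/4=R/2$ we have $R/\eps-1\ge R/(2\eps)$, so $N\ge(R/(2\eps))^n=(\delta/(4\eps))^n$; as $N$ is an integer, $N\ge\lceil(\delta/(4\eps))^n\rceil$, and taking that many of the balls finishes the proof.

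The only place requiring care — and the spot where a careless argument loses the right constant — is keeping the closed $\eps/2$-balls strictly inside the \emph{open} ball $U$; this is why the centres are confined to the ball of radius $R-\eps$ rather than $R-\eps/2$, a loss that the factor $4$ (rather than the volume-optimal $1$, or a naive $2$) comfortably absorbs. I do not expect any genuine obstacle beyond this bookkeeping and the elementary case split.
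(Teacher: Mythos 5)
Your proof is correct and takes essentially the same approach as the paper's: a maximal packing of disjoint $\eps$-diameter closed balls whose doubled copies must, by maximality, cover a slightly shrunken concentric ball, followed by a volume comparison yielding the factor $(\delta/(4\eps))^n$. The only cosmetic difference is that you confine the centres to $\overline{B}(c,R-\eps)$ and therefore need the case split at $\eps=\delta/4$, whereas the paper covers the ball of diameter $\delta-\eps\ge\delta/2$ directly and handles the whole range $\eps\le\delta/2$ uniformly.
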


\begin{proof}
Suppose that we have $m$ disjoint closed balls of diameter $\eps$ inside an open ball of diameter $\delta$ and this $m$ is maximal. 
Then $m$ closed balls of diameter $2\eps$ covers an open ball of diameter $\delta-\eps>\delta/2$, so $(\delta/2)^n\le m(2\eps)^n$, which implies that $m\ge \left(\frac{\delta}{4\eps}\right)^n$. 
\end{proof}

The following theorems explain why the list of properties in Proposition~\ref{p:easyrdim} is shorter than in Proposition~\ref{p:easydrdim}: properties \eqref{pD:5} and \eqref{pD:6} of Proposition~\ref{p:easydrdim} do not hold for $\rdim$.
Recall Subsection~\ref{ss:Baire} for the definition of meager sets and other Baire category notions.

\begin{theorem} Every set $E\subset \R^n$ which is co-meager in a non-empty open set contains a compact set $K$ with $\rdim K=n$. 
\end{theorem}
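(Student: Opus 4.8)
The plan is to build, inside any prescribed non-empty open set $V\subseteq\R^n$ where $E$ is co-meager, a Cantor-type compact set $K\subseteq E$ that simultaneously defeats \emph{every} scale set $D\in\iD$, forcing $\drdim K = n$ for all $D$ and hence $\rdim K = n$. The key idea is that $\drdim K \ge n$ holds as soon as, for every admissible sequence of scales, the set $K$ is "fat at those scales": concretely, if at a sequence of levels the surviving pieces split into roughly the maximal possible number of subpieces, the mass distribution principle (Claim~\ref{c:mass}) yields a lower bound approaching $n$. Since a single $K$ cannot know which scales a given $D$ will pick, we must make $K$ fat at \emph{all} sufficiently fine scales — i.e.\ build $K$ so that between consecutive construction levels the branching is so dense that no matter which dyadic resolution $2^{-n_k}$ a set $D$ selects, the number of cubes of that resolution meeting $K$ that lie inside any given surviving cube is at least $2^{(n-\eps_k)(n_k - n_{k-1})}$ for an error $\eps_k\to 0$.

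The construction goes by induction. Fix a countable basis $\{B_j\}$ of $V$. At stage $k$ we have a finite collection $\iC_k$ of pairwise disjoint closed balls (or dyadic cubes) of radius $\rho_k$, each contained in a member of $\iC_{k-1}$, with radii $\rho_k\downarrow 0$ chosen \emph{very} fast (say $\rho_{k+1}\le \rho_k^{k}$), so that the gaps between consecutive scales are enormous. Inside each ball $C\in\iC_k$, using Lemma~\ref{l:balls}, we can pack $\lceil(\rho_k/(4\rho_{k+1}))^n\rceil$ pairwise disjoint closed balls of radius $\rho_{k+1}$; we keep \emph{all} of them, obtaining $\iC_{k+1}$. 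The co-meagerness of $E$ in $V$ is used exactly as in the standard Baire-category Cantor scheme: at stage $k$ we also shrink one chosen ball so as to land inside $B_{j(k)}$ (cycling through all basis elements), and — crucially — we use that a co-meager set, being the complement of a countable union of nowhere dense sets, allows us at each stage to choose our finitely many new balls to avoid the $k$-th nowhere dense set; hence $K:=\bigcap_k\bigcup\iC_k\subseteq E$. Because the packing is maximal-order (matching the $n$-dimensional volume count up to the constant $4^n$), for every $k$ each ball of $\iC_k$ contains $\ge c_n(\rho_k/\rho_{k+1})^{n}$ balls of $\iC_{k+1}$.

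To conclude $\rdim K = n$, fix an arbitrary $D\in\iD$; by Remark~\ref{r:wlogD} write $D=\bigcup_k[2^{-N_k},2^{-N_k+1})$. For each $k$ let $\kappa(k)$ be the construction stage with $\rho_{\kappa(k)+1}\le 2^{-N_k}<\rho_{\kappa(k)}$ (for $k$ large this is well-defined and $\kappa(k)\to\infty$). Define the natural uniform mass distribution $\mu$ on $K$ assigning mass $(\#\iC_k)^{-1}$ to each ball of $\iC_k$ and distributing uniformly. For a closed set $U$ with $|U|\in D$, say $2^{-N_k}\le|U|<2^{-N_k+1}$, the set $U$ meets at most a bounded number (depending only on $n$) of balls of $\iC_{\kappa(k)}$ and, inside each, at most $C_n(\,|U|/\rho_{\kappa(k)+1})^n$ balls of the next generation that are already at scale $\le 2^{-N_k}$; chasing the mass bound through the fast-decay choice of the $\rho_k$ (which makes $\rho_{\kappa(k)}/|U|$ negligible in the exponent) gives $\mu(U)\le C_n'\,|U|^{\,n-\eps_k}$ with $\eps_k\to 0$. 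Applying Claim~\ref{c:mass} with $s=n-\eps_k$ for each $k$ shows $\drdim K\ge n-\eps_k$ for all large $k$, hence $\drdim K=n$; as $D$ was arbitrary, $\rdim K=n$.

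The main obstacle I expect is the \textbf{quantitative matching of scales}: a given $D$ may choose its dyadic resolutions $2^{-N_k}$ very close to one of our construction scales $\rho_{\kappa(k)}$ (e.g.\ $2^{-N_k}$ just barely below $\rho_{\kappa(k)}$), in which case a set $U$ of diameter $\sim 2^{-N_k}$ could still contain a full ball of $\iC_{\kappa(k)}$ and carry mass $(\#\iC_{\kappa(k)})^{-1}$, which is \emph{not} small compared to $|U|^n$ unless $\#\iC_{\kappa(k)}$ is comparable to $\rho_{\kappa(k)}^{-n}$. This is precisely what the maximal (volume-order) packing at \emph{every} level buys us: $\#\iC_{\kappa(k)}\ge c_n\,\rho_{\kappa(k)}^{-n}$ up to a dimensional constant, and the super-fast decay $\rho_{k+1}\le\rho_k^k$ ensures that the "slack" $\rho_{\kappa(k)}/|U|$ — no matter where in $[\rho_{\kappa(k)+1},\rho_{\kappa(k)})$ the scale $2^{-N_k}$ falls — contributes only an $o(1)$ error to the exponent. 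Making this bookkeeping airtight (in particular handling the case $2^{-N_k}\in[\rho_{\kappa(k)+1},\sqrt{n}\,\rho_{\kappa(k)+1}]$, where $U$ sits between two of our generations) is the one place where care is needed; everything else is the routine Baire-category Cantor construction.
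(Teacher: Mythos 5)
There is a genuine gap, and it is conceptual rather than technical: the statement you set out to prove --- that a single compact $K\su E$ satisfies $\drdim K=n$ for \emph{every} $D\in\iD$ --- is strictly stronger than what is needed and is in fact false in general. Taking $D=(0,\infty)$, your claim would force $\hdim K=n$. But $E$ may be a dense $G_\delta$ subset of $\R^n$ of Hausdorff dimension zero (such sets exist and are co-meager), and then every compact $K\su E$ has $\hdim K=0$. So no construction can achieve your stated goal, and the error must sit inside the argument. It sits exactly in the step you treat as routine: "choose our finitely many new balls to avoid the $k$-th nowhere dense set" while keeping a volume-order packing at every pair of consecutive scales. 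Avoiding $\overline{A_k}$ forces you to replace each selected ball by a sub-ball whose radius admits \emph{no} lower bound in terms of the current scale (the complement of $\overline{A_k}$ is open and dense but can be spatially and metrically as thin as the given $E$ dictates). This creates unavoidable, uncontrollably long ranges of scales on which $K$ is covered by very few sets, and any $D$ probing those ranges --- in particular $D=(0,\infty)$ --- detects a small dimension. Your closing discussion of "quantitative matching of scales" worries about the wrong adversary: the problem is not where $D$ places its dyadic resolutions relative to your $\rho_k$, but that the Baire-category dodging destroys fatness on whole intervals of scales that you cannot shrink away.

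The repair is the observation you bypassed: since $\rdim$ is a \emph{supremum} over $D\in\iD$, it suffices to produce one scale set $D$, built hand in hand with $K$, for which $\drdim K=n$. This is the paper's route. One packs, inside each current ball of diameter $\delta_k$, the maximal number $m_k=\lceil(\delta_k/4\eps_k)^n\rceil$ of disjoint balls of diameter $\eps_k$ with $\eps_k\le\delta_k^k$, then shrinks each to a much smaller ball of some common diameter $\delta_{k+1}$ avoiding $A_k$ --- accepting that the gap between $\eps_k$ and $\delta_{k+1}$ is uncontrolled --- and finally takes the tailored scale set $D=\cup_k[\eps_k,2\eps_k)$, which only ever probes the scales at which $K$ is genuinely fat. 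The mass distribution principle (Claim~\ref{c:mass}) applied to the uniform measure then gives $\mu(U)\le C_n4^n|U|^{n(1-1/k)}$ for $|U|\in[\eps_k,2\eps_k)$, hence $\drdim K\ge n(1-1/k)$ for every $k$ and $\rdim K=n$. Your mass-distribution bookkeeping in the final paragraph is essentially this computation; what must be abandoned is the ambition to run it for arbitrary $D$.
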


\begin{proof} 
By assumption there is an open ball $D_\emptyset\su\R^n$ and there are nowhere dense sets $A_1, A_2, \ldots$ such that
\begin{equation}\label{e:DminE}
    D_\emptyset\setminus E=\bigcup_{k=1}^\infty A_k.
\end{equation}

We will construct a compact set $K\subset D_{\emptyset}\cap E$ implementing the following plan. Once we have defined balls $B_{i_1\dots i_k}$ of the same diameter disjoint from $A_1\cup\dots \cup A_k$, then we can find smaller balls $D_{i_1\dots i_k}\subset B_{i_1\dots i_k}$ which are disjoint from $A_{k+1}$ as well. We choose a large number of small balls $B_{i_1\dots i_{k+1}}\subset D_{i_1\dots i_k}$ of the same diameter, and repeat this process. Finally, the Cantor scheme $K=\bigcap_k\left(\bigcup_{i_1}\dots \bigcup_{i_k} B_{i_1\dots i_k}\right)$ gives our example, which is clearly disjoint from $\bigcup_{k=1}^\infty A_k$.

More precisely, let $\delta_1$ be the diameter of $D_\emptyset$. 
By induction we can construct sequences of positive numbers $\{\eps_i\}_{i\geq 1}$ and $\{\delta_i\}_{i\geq 1}$, positive integers $\{m_i\}_{i\geq 1}$, open balls $B_{i_1 \dots i_k}$ and $D_{i_1\ldots i_k}$ $(1\leq i_1\leq m_1,\dots,1\leq i_k\leq m_k)$ such that for each $k\geq 1$ we have 
\begin{enumerate}
\item \label{e:ballsB} the balls $B_{i_1 \dots i_k}$ $(1\leq i_1\leq m_1,\dots,1\leq i_k\leq m_k)$ have pairwise disjoint closures and diameter $\eps_k$,
\item the balls $D_{i_1 \dots i_{k-1}}$ $(1\leq i_1\leq m_1,\dots,1\leq i_{k-1}\leq m_{k-1})$ have diameter $\delta_k$,
\item \label{e:DBD}
$D_{i_1 \dots i_{k}}\subset B_{i_1 \dots i_{k}}\subset D_{i_1 \dots i_{k-1}}$ $(1\leq i_1\leq m_1,\dots,1\leq i_k\leq m_k)$,
\item \label{e:disjointoA}
$D_{i_1 \dots i_k}\cap A_k=\emptyset$ $(1\leq i_1\leq m_1,\dots,1\leq i_k\leq m_k)$,
\item \label{e:epsdelta} 
$\eps_k\le \delta_k/2$, $\eps_k\leq \delta_k^k$ and $m_k = \left\lceil\left(\frac{\delta_k}{4\eps_k}\right)^n\right\rceil$.
\end{enumerate}
Let $k\in\N$ and suppose that $D_{i_1\ldots i_{k-1}}$ has been defined and has diameter $\delta_k$, which holds for $k=1$.
Choose $\eps_k$ and $m_k$ according to \eqref{e:epsdelta}.
By Lemma~\ref{l:balls} in each $D_{i_1\ldots i_{k-1}}$ we can choose $m_k$ open balls $B_{i_1\ldots i_{k}}$ ($i_k=1,\ldots,m_k$) with disjoint closure and of diameter $\eps_k$.
Since $A_k$ is nowhere dense, in each ball $B_{i_1\ldots i_{k}}$ we can find an open ball $D_{i_1\ldots i_{k}}$ disjoint to $A_k$. By shrinking some of them we can make their size to be the same. Let $\delta_{k+1}$ be their common diameter. This completes the construction with the above properties.

Let 
\begin{equation*}K=\bigcap_{k=1}^{\infty} 
\left(\bigcup_{i_1=1}^{m_1} \dots \bigcup_{i_k=1}^{m_k} B_{i_1\dots i_k} \right).
\end{equation*}
By \eqref{e:DminE}, \eqref{e:DBD} and \eqref{e:disjointoA} we have $K\su E$.

Let $D=\cup_{k=1}^{\infty} [\eps_k, 2\eps_k)$. It is enough to prove that $\drdim K=n$. Let $\mu$ be the natural probability measure supported on $K$ such that for all $k\geq 1$ and $i_1\in \{1,\dots,m_1\}, \dots, i_k \in \{1,\dots,m_k\}$ we have  
\begin{equation*} 
\mu(B_{i_1\dots i_k})=\frac{1}{m_1\cdots m_k}.
\end{equation*} 
Let $U\su\R^n$ be a closed set with $|U|\in D$.
Then for some $k$ we have $|U|\in[\eps_k, 2\eps_k)$ and there is a ball of radius $2\eps_k$ that contains $U$.
Then by \eqref{e:ballsB}, $B$ can intersect at most $C_n$ sets of the form $B_{i_1\dots i_k}$ with some finite constant $C_n$, so \eqref{e:epsdelta} yields
\begin{equation*} 
\mu(U)\le\mu(B)\leq \frac{C_n}{m_1\cdots m_k}\leq \frac{C_n}{m_k}\leq C_n \left(\frac{4\eps_k}{\delta_k}\right)^n\leq C_n 4^n \eps_k ^{n(1-\frac1k)} 
\le C_n 4^n |U|^{n(1-\frac1k)}. 
\end{equation*} 
Applying the mass distribution principle for the $D$-diameter restricted Hausdorff dimension (Lemma~\ref{l:mass}) implies that $\drdim K\ge n(1-\frac1k)$ for any $k\ge 1$, and the proof is complete.
\end{proof}

\begin{cor}
Each non-meager set $E\su\R^n$ with the Baire property satisfies $\rdim E=n$, and no dense set can be covered by a $G_\delta$-set of universal restricted Hausdorff dimension zero.
\end{cor}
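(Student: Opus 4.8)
The plan is to derive both assertions of the corollary directly from the preceding theorem, which supplies, for every set $E\su\R^n$ that is co-meager in some non-empty open set, a compact subset $K\su E$ with $\rdim K=n$. First I would handle the non-meager Baire-property case. As noted in Subsection~\ref{ss:Baire}, if $E\su\R^n$ is non-meager and has the Baire property, then $E$ is co-meager in some non-empty open set $U$: writing the symmetric difference of $E$ with an open set $V$ as a meager set $M$, one checks $V$ is non-empty (otherwise $E\su M$ is meager) and $E$ is co-meager in $V$. Applying the theorem to $E$ yields a compact $K\su E$ with $\rdim K=n$, and then monotonicity of $\rdim$ (Proposition~\ref{p:easyrdim}\eqref{pR:2}) gives $n=\rdim K\le \rdim E$, while $\rdim E\le n$ is automatic from Proposition~\ref{p:easyrdim}\eqref{pR:7} (or simply because $\rdim$ takes values in $[0,n]$). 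Hence $\rdim E=n$.

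For the second assertion I would argue by contradiction. Suppose $E\su\R^n$ is dense and $E\su G$ for some $G_\delta$ set $G$ with $\rdim G=0$. A dense $G_\delta$ set is co-meager in $\R^n$ by the Baire category theorem, so the theorem applied to $G$ (with $\R^n$ itself as the non-empty open set) produces a compact $K\su G$ with $\rdim K=n$. By monotonicity $\rdim G\ge \rdim K=n>0$, contradicting $\rdim G=0$. Therefore no dense set is contained in a $G_\delta$ set of universal restricted Hausdorff dimension zero.

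Since both parts are immediate consequences of the theorem together with the monotonicity of $\rdim$ and the elementary Baire-category observations already recorded in Subsection~\ref{ss:Baire}, there is no real obstacle here; the only point requiring a word of care is the reduction, in the first part, from "non-meager with the Baire property" to "co-meager in a non-empty open set", which is exactly the remark made at the end of Subsection~\ref{ss:Baire} and can be cited rather than reproved. I would keep the proof to a few sentences, invoking Proposition~\ref{p:easyrdim} and the preceding theorem explicitly.
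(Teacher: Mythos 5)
Your proof is correct and follows exactly the intended route: the paper states this as an immediate corollary of the preceding theorem, using the observation from Subsection~\ref{ss:Baire} that a non-meager set with the Baire property is co-meager in a non-empty open set, and the Baire category theorem for the dense $G_\delta$ case, combined with monotonicity of $\rdim$. Nothing is missing.
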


Let us denote by $\iK(\R^n)$ the set of non-empty 
compact subsets of $\R^n$ endowed with the Hausdorff metric.

\begin{theorem} The function $\rdim \colon \iK(\R^n)\to [0,n]$ is not Borel for each $n\geq 1$.
\end{theorem}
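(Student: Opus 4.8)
The strategy is to show that $\rdim$ fails to be Borel by exhibiting a subset of $\iK(\R^n)$ defined via $\rdim$ that is too complex to be Borel, using the machinery of analytic (and co-analytic) sets. The natural candidate is the set $\{K\in\iK(\R^n):\rdim K=n\}$, or rather $\{K:\rdim K<n\}$, or a level set at some intermediate value. From the previous theorem, every compact set that is co-meager in a nonempty open subset of one of its ``pieces'' already has $\rdim=n$, so the function is highly discontinuous; this already rules out Baire-class-$0$ behaviour, but we need genuine non-Borelness. The plan is to reduce a known non-Borel set (a complete analytic set, e.g.\ the set of ill-founded trees, or the set of compact sets with a certain $\sigma$-stable-dimension property) continuously to a $\rdim$-level set.

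\textbf{Step 1.} Recall the formula $\rdim E=\sup\{\drdim E:D\in\iD\}$ and, via Remark~\ref{r:wlogD} and Claim~\ref{c:dimofAS}, the explicit combinatorial description of $\rdim$ on sets of the form $A(\{S_i:i\ge 1\})$: namely $\drdim A_S=\liminf_{k\to\infty}M_S(n_k)$, and $\rdim A_S=\sup_{D}\liminf_k M_S(n_k)=\limsup_k M_S(k)=\pdim A_S$ by Theorem~\ref{t:ASDarboux}. So on the subclass of digit-restriction sets, $\rdim$ agrees with $\pdim$, which is already known (Mattila--Mauldin \cite{MM}) to be non-Borel. The key point to establish is that the assignment $S\mapsto A_S$ (suitably coded) is continuous from $2^{\N}$ (or from the space of trees) into $\iK(\R^n)$ with the Hausdorff metric, so that a Borel-measurability of $\rdim$ would pull back to a Borel-measurability of $S\mapsto\limsup_k M_S(k)$, or — more to the point — would make the Mattila--Mauldin non-Borel witnessing set Borel, a contradiction.

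\textbf{Step 2.} Make this reduction precise. The Mattila--Mauldin argument produces, from an arbitrary tree $T$ on $\N$, a compact set $K_T\subset\R^n$ depending continuously (in the Hausdorff metric) on $T$ in the appropriate topology on trees, such that $\pdim K_T$ takes value $n$ if $T$ is ill-founded and something strictly smaller (or $0$) if $T$ is well-founded, so that $\{T:\pdim K_T=n\}$ is $\mathbf{\Sigma}^1_1$-complete, hence non-Borel. I would check that on the relevant $K_T$ one in fact has $\rdim K_T=\pdim K_T$ — this should follow because these sets are built as nested unions/intersections along a single scale sequence, exactly the situation where Proposition~\ref{p:easyrdim}\eqref{pR:7} pins $\rdim$ between $\hdim$ and $\pdim$ and the construction forces equality with $\pdim$ on the relevant components (as in the proof of Theorem~\ref{t:rdimnotpdim}, where a single scale gives $\rdim=\pdim=n$ while another gives $\drdim=0$). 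If the Mattila--Mauldin construction does not directly yield $\rdim=\pdim$, one replaces it with a construction modelled on Proposition~\ref{p:rdim}: attach to each node of the tree a scaled copy of an $A_{S_i}$ with $\pdim A_{S_i}$ close to $n$ but $\hdim A_{S_i}=0$, so that $\rdim$ of the resulting set detects ill-foundedness (giving $\rdim=n$) versus well-foundedness (giving $\rdim<n$ via the argument of Proposition~\ref{p:rdim} that a single scale sequence can only ``see'' one branch at a time).

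\textbf{Step 3.} Conclude: since $T\mapsto K_T$ is Borel (indeed continuous), if $\rdim$ were Borel then $\{T:\rdim K_T=n\}$ would be Borel; but this set equals a $\mathbf{\Sigma}^1_1$-complete set, contradiction. Hence $\rdim\colon\iK(\R^n)\to[0,n]$ is not Borel.

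\textbf{Main obstacle.} The crux is Step 2: verifying that the compact sets used in the reduction genuinely satisfy $\rdim=\pdim$ (so that the non-Borelness of $\pdim$ transfers to $\rdim$), rather than having $\rdim$ strictly smaller. Equivalently, one must engineer the branch-wise building blocks so that the supremum over scale sequences $D\in\iD$ that defines $\rdim$ is achieved ``in the limit'' along an infinite branch — which requires combining, for each fixed finite depth, a scale sequence that is near-optimal for that depth, exactly as in the proof of Proposition~\ref{p:rdim}. The Baire-category phenomenon from the previous theorem suggests the construction should be robust, but the bookkeeping of aligning scale sequences with tree branches, while keeping the map into $\iK(\R^n)$ continuous, is the delicate part.
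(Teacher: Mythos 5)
Your overall strategy coincides with the paper's: both reduce to the Mattila--Mauldin non-Borelness argument for level sets of a dimension, using that $\rdim$ vanishes on countable sets (since $\rdim\le\pdim$) while being bounded below on the ``branching'' sets appearing in that reduction. However, the step you yourself flag as the main obstacle is exactly the content of the actual proof, and you do not supply it. The Mattila--Mauldin reduction is built not from digit-restriction sets $A_S$ but from the continued-fraction sets $N(z)=\{[a_1,k_1,a_2,k_2,\dots]:k_i\in\N\}$, so your Step~1 observation that $\rdim A_S=\pdim A_S$ (which is correct, by Theorem~\ref{t:ASDarboux}) does not plug into their argument. What is needed, and what the paper proves, is a uniform lower bound $\rdim N(z)\ge 1/2$ for every irrational $z$. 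This is not a formal consequence of $\pdim N(z)>0$: one must exhibit, inside $N(z)$, a compact set $M$ (digits $k_i$ bounded by rapidly growing $n_i$) whose $m$th-level construction intervals have lengths at least $\delta_m$ with $\log\delta_m/\log(n_1\cdots n_m)\to -2$, take the \emph{specific} scale set $D=\bigcup_m[\delta_m,2\delta_m)$ adapted to those lengths, and apply the mass distribution principle (Claim~\ref{c:mass}) to the natural measure on $M$ to get $\drdim M\ge 1/2-\eps$. Without some such bespoke verification, the reduction gives nothing about $\rdim$.

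Your proposed fallback --- rebuilding the tree reduction with scaled copies of $A_{S_i}$ attached to nodes, in the spirit of Proposition~\ref{p:rdim} --- is not carried out and faces the precise difficulty you name: a single scale sequence $D$ can only be near-optimal for one block at a time, so ensuring that the supremum over $D$ detects an infinite branch (ill-foundedness) while staying small for well-founded trees, and simultaneously keeping $T\mapsto K_T$ continuous into $\iK(\R^n)$, requires a new argument that is not sketched. Also note that for well-founded trees the Mattila--Mauldin sets are countable, so the ``small'' side is free via $\rdim\le\pdim$; the entire burden is on the lower bound for $N(z)$, which remains unproved in your write-up. So the proposal is a correct plan with the decisive lemma missing.
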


\begin{proof}
We follow the proof of \cite[Theorem~7.5]{MM}. 
Let $\II=[0,1]\setminus \QQ$ be the set of irrational numbers in $[0,1]$.
For each $z\in \II$ there is a unique infinite continued fraction expansion as  
\begin{equation*}
[b_1,b_2,\dots]=\frac{1}{b_1+\frac{1}{b_2+\dots}}, 
\end{equation*}
where $b_i\in \NN$. 
For $z=[a_1,a_2,\dots]\in \II$ define 
\begin{equation*}
N(z)=\{[a_1,k_1,a_2,k_2,\dots]: k_i\in \N,\, i\geq 1 \}.
\end{equation*}
\begin{claim} $\rdim N(z)\geq 1/2$ for all $z\in \II$. 
\end{claim}
\begin{proof}[Proof of the claim]
Fix $z=[a_1,a_2,\dots]\in \II$, we will define large enough positive integers $n_i\geq 11$ depending on the sequence $a_i$, and consider the compact set $M\subset N(z)$ defined as 
\begin{equation*}
M=\{[a_1,k_1, a_2, k_2\dots]: 1\leq k_i\leq n_i,~k_i\in \NN,~i\geq 1 \}.
\end{equation*} 
For a given $m\in\N$ let $J(k_1,\dots, k_m)$ be the smallest interval containing the points of $M$ whose expansions start with $k_1,\dots,k_m$. 
It is proved in \cite[Pages~91--92]{MM} that the $m$th level intervals $J(k_1,\dots, k_m)$ are pairwise disjoint, and their lengths are at least 
\begin{equation*}
\delta_m:=\frac{1}{4^{2m+2}(a_1\cdots a_{m+1})^2(n_1\cdots n_m)^2}.
\end{equation*}
Fix the sequence $n_m\geq 11$ to be so large that 
\begin{equation} \label{e:asympt}
\lim_{m\to \infty} \frac{\log\delta_m}{\log(n_1\cdots n_m)}=-2.
\end{equation}
Define  
\begin{equation*} 
D=\bigcup_{m=1}^{\infty} [\delta_m,2\delta_m),
\end{equation*}
it is enough to prove that $\drdim M\geq 1/2$. 

As every $m$th level interval $J(k_1,\dots, k_m)$ contains exactly $n_{m+1}$ intervals of level $m+1$, we can uniquely define a Borel probability measure $\mu$ supported on $M$ by setting 
\begin{equation*}
\mu(J(k_1,\dots, k_m))=\frac{1}{n_1\cdots n_m}.
\end{equation*} 
Fix an arbitrary $\eps>0$. Every interval $I$ of length $d\in [\delta_m,2\delta_m)$ can intersect at most three 
$m$th level intervals $J$, so by \eqref{e:asympt} for all large enough $m$ we have
\begin{equation*}
 \mu(I)\leq \frac{3}{n_1\cdots n_m}\leq (\delta_m)^{\frac 12-\eps}\leq d^{\frac 12-\eps}.
\end{equation*}
The mass distribution principle for the $D$-diameter restricted Hausdorff dimension (Lemma~\ref{l:mass}) yields $\drdim M\geq 1/2-\eps$. Since $\eps>0$ was arbitrary, we obtain that $\drdim M\geq 1/2$, and the proof of the claim is complete.
\end{proof}
Now copying the proof of \cite[Theorem~7.5\,(b)]{MM} concludes that the set 
\begin{equation*}
\iK_0=\left\{K\in \iK\left([0,1]\times \{0\}^{n-1}\right): \rdim K>0\right\} 
\end{equation*}
is a non-Borel set. Since $\iK_0$ is the intersection of 
\begin{equation*} \iK_1=\{K\in \iK(\RR^n): \rdim K>0\}
\end{equation*} 
and a closed subset of $\iK(\R^n)$, 
the set $\iK_1$ cannot be Borel, which completes the proof of the theorem.
\end{proof}

\section{Bilipschitz invariant nice dimensions}
\label{s:bilipdim}

The authors proved in \cite{BK} that among those Lipschitz invariant, monotone dimensions 
on the compact subsets of $\R^n$ that agree with the similarity dimension for homogeneous SSC self-similar sets, the Hausdorff dimension is the smallest, the upper box dimension is the greatest. Alex Rutar asked what happens if Lipschitz invariance is replaced by bilipschitz invariance. 

\subsection{The smallest bilipschitz invariant dimensions} The next result shows that the smallest such dimension is the modified lower dimension $\mldim$.

\begin{theorem}\label{t:mldim}
Let $n$ be a positive integer and let $D$ be a function from the family of compact subsets of $\R^n$ to $[0,n]$. Suppose that 
\begin{itemize}
\item[$(\star)$] \parbox{\linewidth}{$D$ is bilipschitz invariant, monotone, and it agrees with the similarity dimension for any 
homogeneous self-similar set with the SSC.}
\end{itemize}
 
Then for every compact set $K\su \R^n$ we have \begin{equation} \label{e:dimMLatmostD}
\mldim(K)\le D(K).
\end{equation}
\end{theorem}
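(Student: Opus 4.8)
The plan is to reduce the lower bound $\mldim(K)\le D(K)$ to a statement about sets that carry a bilipschitz copy of a homogeneous self-similar set with the SSC, and then to exploit the defining formula $\mldim(K)=\sup\{\ldim E : E\subset K\}$ together with monotonicity of $D$. First I would fix a compact $K\subset\R^n$ and an arbitrary $E\subset K$; by monotonicity of $D$ it suffices (after a closure argument, since $D$ is defined on compact sets and $\ldim E = \ldim \overline E$) to show that $\ldim E \le D(\overline E)$, and then take the supremum over $E$. Replacing $E$ by its closure, the real task becomes: for a compact set $F\subset\R^n$, show $\ldim F \le D(F)$.

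The key step is to find, inside $F$, a bilipschitz copy of a homogeneous self-similar set with the SSC whose similarity dimension is close to $\ldim F$. This is exactly where the lower dimension's uniform lower mass bound is used: if $\ldim F > s$, then there is a constant $C>0$ so that for every $x\in F$ and every $0<r<R\le\diam F$ one has $N(B(x,R)\cap F, r)\ge C(R/r)^s$. I would use this to build, by an inductive Cantor-type construction, a sequence of scales $R_0 > R_1 > R_2 > \cdots$ with $R_{k+1}/R_k$ roughly constant and, at each level, a controlled number $\sim (R_k/R_{k+1})^s$ of well-separated points of $F$ inside each surviving ball; passing to a geometrically separated subfamily (keeping a definite fraction of the points, which is possible since $N(\cdot,r)$ counts $r$-balls and one can thin to $\gtrsim (R_k/R_{k+1})^s$ points that are pairwise $\gtrsim R_{k+1}$-separated) yields a compact subset $T\subset F$ which is bilipschitz equivalent to the symbolic space $\prod_k \{1,\dots,N_k\}$ with the natural ultrametric, where $N_k\gtrsim (R_k/R_{k+1})^s$. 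By choosing the ratios $R_k/R_{k+1}$ along a suitable subsequence to stabilize, I can arrange that $T$ is bilipschitz equivalent to a homogeneous self-similar set with the SSC of similarity dimension $\ge s - \eta$ for any prescribed $\eta>0$; here the standard facts quoted in the preliminaries (every SSC self-similar set is bilipschitz to an ultrametric space, and conversely suitably regular ultrametric Cantor sets are bilipschitz to homogeneous SSC self-similar sets) do the translation. Then $(*)$ gives $D(T)\ge s-\eta$, and monotonicity gives $D(F)\ge D(T)\ge s-\eta$; letting $\eta\to 0$ and then $s\uparrow\ldim F$ yields $D(F)\ge\ldim F$.

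Finally, assembling: for each $E\subset K$ we get $D(K)\ge D(\overline E)\ge\ldim\overline E=\ldim E$, so $D(K)\ge\sup_{E\subset K}\ldim E=\mldim(K)$, which is \eqref{e:dimMLatmostD}.

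I expect the main obstacle to be the geometric construction of the embedded self-similar (or ultrametric) set: one must simultaneously (i) keep enough separated points at every scale to not lose more than $o(1)$ in the exponent, (ii) make the separation-to-diameter ratios genuinely uniform across levels so that the limit set is homogeneous with the SSC rather than merely Ahlfors–David regular or quasi-self-similar, and (iii) verify that the resulting coding map is bilipschitz, not just Hölder. Handling the case $\ldim F = 0$ is trivial, and $\mldim K=0$ for totally disconnected small sets is automatic, so the content is entirely in the case $\ldim F>0$. A secondary but routine point is the reduction from general subsets $E$ to compact $F$, which just uses $\ldim E=\ldim\overline E$ and monotonicity of $D$.
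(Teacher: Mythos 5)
Your argument is correct in outline, but it takes a genuinely different route from the paper's. The paper argues by contradiction and relies on two quoted results: the proof of Fraser's Theorem 3.4.3, which produces an $s$-Ahlfors--David regular compact subset $C\su K$ whenever $s<\mldim K$, and Mattila--Saaranen's Theorem 3.1, which for any $t<s$ supplies a $t$-dimensional homogeneous SSC self-similar set bilipschitz equivalent to a subset of $C$; then (*) and monotonicity give $t\le D(K)$ for every $t<\mldim K$. You instead unwind $\mldim K=\sup\{\ldim E:E\su K\}$ and build the bilipschitz copy of the self-similar set directly inside a compact set of lower dimension greater than $s$ by a Moran-type construction, which in effect reproves a merged form of those two cited theorems. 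This buys self-containedness at the cost of carrying out the uniformization you yourself flag as the main obstacle: you must fix a single ratio $\rho$ and a single branching number $N\le C\rho^{-s}$ used at \emph{every} level (losing only $\eta$ in the exponent as $\rho\to 0$), verify that the coding map is bilipschitz via the scale-$R_{k+1}$ separation of the children inside each parent piece, and then realize the resulting constant-ratio ultrametric Cantor set as an actual homogeneous SSC self-similar subset of $\R^n$ --- which is possible because its similarity dimension is below $s<n$, after shrinking $\rho$ or passing to an iterate so that $N$ disjoint balls of radius comparable to $\rho$ fit inside a unit ball. Your preliminary reduction to compact $F$ via $\ldim E=\ldim\overline{E}$ and monotonicity of $D$ is fine. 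Both proofs are valid; the paper's is much shorter, while yours avoids the external citations.
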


\begin{proof} 
Assume to the contrary that $K\su \R^n$ is compact and $D(K)<\mldim K$. Choose $s$ and $t$ such that  $D(K)<t<s<\mldim K$.
Then the proof of \cite[Theorem~3.4.3]{F} yields that any compact set $K\su\R^n$ with $\mldim K>s$ contains an $s$-Ahlfors--David regular compact subset $C$.
Take such a $C$. By a theorem of Mattila and Saaranen \cite[3.1.~Theorem]{MS}, if
$0<t<s$ then for any $s$-regular set $C\su\R^n$ there exists a $t$-dimensional homogeneous self-similar set $S\subset \R^n$ with the SSC which is bilipschitz equivalent to a subset $F$ of $C$.
Then $F$ is clearly compact, and by $(\star)$ we obtain
\begin{equation}
t=\dim(S)=D(S)=D(F)\le D(C)\le D(K)<t,
\end{equation}
which is a contradiction.
\end{proof}

\begin{remark}
We claim that in Theorem~\ref{t:mldim} we cannot replace the family of compact sets by a family $\iF$ that contains all $\sigma$-compact sets of $\R^n$ so that \eqref{e:dimMLatmostD} still holds for every $A\in\iF$. 
Indeed, for any $A\in\iF$ let
\begin{align*} \label{e:mindim}
D_1(A)=\sup \{&\dim S\colon  S \textrm{ is a homogeneous self-similar set with the SSC,}  \\ 
& \textrm{which is bilipschitz equivalent to a subset of } A\},
\end{align*}
where we use the convention $\sup\emptyset=0$.
Clearly $D_1$ satisfies $(\star)$ on $\iF$ as well, and $D_1(\Q)=0$. Therefore, we obtain $D_1(\Q)=0<1=\mldim \Q$.

In fact, it is easy to see that if $\iF$ is a collection of subsets of $\R^n$ containing all compact sets of $\R^n$ and $D\colon \iF\to[0,n]$ satisfies $(\star)$, then we have $D(A)\ge D_1(A)$ for any $A\in\iF$. This implies that on a general domain $D_1$ is the smallest dimension that satisfies $(\star)$.
\end{remark}

\subsection{The largest bilipschitz invariant dimensions}
Alex Rutar asked whether the Assouad dimension is the greatest dimension on compact sets for which $(\star)$ holds. Before proving an affirmative answer in $\R$ and some positive results in higher dimensions, we give a simple example showing that in this form the answer is negative in $\R^n$ for every $n>1$.

\begin{example} \label{ex:Assouad}
Let $n>1$. For any compact set $K\su\R^n$ let
\begin{equation*}
    D_2(K) = \begin{cases}
  \adim K  & \text{ if } $K$ \text{ is totally disconnected}, \\
  n & \text{ otherwise}.
\end{cases}
\end{equation*}
Then $D_2$ is a function from the family of compact subsets of $\R^n$ to $[0,n]$ satisfying $(\star)$, and for any compact line segment $L$ we have $D_2(L)=n>1=\adim L$.
\end{example}

The following example gives the greatest dimension on compact sets for which $(\star)$ holds, and it also shows that $\adim$ is not an upper bound even for totally disconnected Cantor sets when $n>1$.

\begin{example}
 For any compact set $K\su\R^n$ let
\begin{equation*}
    D_3(K) = \begin{cases}
  \inf \{ \dim S : S \in \iS(K) \}  & \text{ if } \iS(K)\neq\emptyset, \\
  n & \text{ otherwise},
\end{cases}
\end{equation*}
where $\iS(K)$ is the set of those homogeneous self-similar sets with the SSC that contain a subset bilipschitz equivalent to $K$.

Then $D_3$ is a function from the family of compact subsets of $\R^n$ to $[0,n]$ satisfying $(\star)$. We claim that $D(K)\le D_3(K)$ holds for any such $D$. Indeed, assume to the contrary that $D(K)>D_3(K)$ for a compact set $K$, then there exist sets $S\in\iS(K)$ and $S'\su S$ such that $\dim S<D(K)$ and $S'$ is bilipschitz equivalent to $K$. 
Then $(\star)$ implies 
\begin{equation*}
 D(K)= D(S')\leq D(S)=\dim S<D(K),
\end{equation*}
which is a contradiction.

Finally, we claim that $D_3$ can be greater than $\adim K$ even for totally disconnected compact sets when $n>1$.
Indeed, it is proved in \cite{Mo} (and in fact not hard to show directly) that in $\R$ a Cantor set of positive measure and the compact set $\{0, 1, 1/2, 1/3,\ldots\}$ are not bilipschitz equivalent to any ultrametric space.
So let $n>2$ and $K$ be one of the above subsets of a line in $\R^n$.
Then $K$ is a totally disconnected compact set and $D_3(K)=n>1\ge \adim K$.
\end{example}

A `reasonable' dimension should be $k$ for any $k$-dimensional compact smooth surface, so it is natural to add this requirement to $(\star)$ and one might hope an affirmative answer to Alex Rutar's question with this modification. The following example shows that this is not the case.

\begin{example}
Let $n>1$. For any compact $K\su\R^n$ let
\begin{equation*}
D_4(K) = 
\begin{cases}
  \adim K  & \text{ if } K \text{ is totally disconnected or } \adim K\le n-1, \\
  n & \text{ otherwise}.
\end{cases}
\end{equation*}
Again, it is easy to check that $D_4$ is a function from the family of compact subsets of $\R^n$ to $[0,n]$, it satisfies $(\star)$ and $D_4(F)=k$ for any $k$-dimensional compact smooth surface $F$ for any $k=1,\ldots,n-1$. On the other hand, if $E$ is a compact set with $\adim E=n-1/2$ and $F$ is a smooth compact surface then
\begin{equation*} 
D_4(E\cup F)=n>n-1/2=\adim(E\cup F).
\end{equation*}
\end{example}

In order to prove positive results, first we need some preparation. 
\begin{notation}
Let $\iR\subset X\times Y$ be a relation. Its \emph{domain} is defined as
\begin{equation*} \dom \iR=\{x\in X: \exists y\in Y \text{ such that } (x,y)\in \iR\}.
\end{equation*} 
The \emph{image} of a set $A\subset X$ under the relation $\iR$ is 
\begin{equation*} \iR(A)=\{y\in Y: \exists x\in A \text{ such that } (x,y)\in \iR\}.
\end{equation*} 
\end{notation}
K\"aenm\"aki and Rutar \cite[Definition~2.1]{KR} generalized Lipschitz maps as follows.

\begin{defi} \label{d:Ldec}
Let $(X,d_X)$ and $(Y,d_Y)$ be metric spaces and let $\iR\subset X\times Y$ be a relation. We say that $\iR$ is \emph{Lipschitz decomposable} if $\dom \iR=X$ and there are constants $M\in \NN$ and $c>0$ such that for all $x\in X$ and $r>0$ there are $y_1,\dots,y_M\in Y$ such that 
\begin{equation*}
\iR(B_X(x,r))\subset \bigcup_{i=1}^M B_Y(y_i,cr).
\end{equation*}
Note that we can take $M=1$ if and only if $\iR$ is a Lipschitz map.   
The relation $\iR$ is said to be \emph{bilipschitz decomposable} if both $\iR$ and $\iR^{-1}$ are Lipschitz decomposable. For a map $f\colon X\to Y$ we identify $f^{-1}$ with the relation $f^{-1}=\{(y,x): f(x)=y\}$. 
\end{defi}

K\"aenm\"aki and Rutar pointed out in \cite[Lemma~2.3]{KR} that the Assouad dimension is not only bilipschitz invariant, but invariant under the more general bilipschitz decomposable relations as well; this is the first part of the next lemma. The second part of the following lemma easily follows from the definitions of $s$-regular sets and the Hausdorff dimension.

\begin{lemma} \label{l:KR} Let $X,Y\subset \RR^n$ and $\iR\subset X\times Y$ be a bilipschitz decomposable relation.
\begin{enumerate}[(i)]
\item \label{i:dec1} Then $\adim X=\adim Y$; 
\item \label{i:dec2} if $X$ is $s$-regular, then $Y$ is also $s$-regular.
\end{enumerate}
\end{lemma}

The construction of the following lemma is quite standard.

\begin{lemma} \label{l:lipimage} Let $K\su\R^n$ be bounded with $\adim K < t\leq n$. Then there exist a homogeneous self-similar set $C\subset \RR^n$ with the SSC and $\dim C\in (\adim K, t)$, and a Lipschitz map $f\colon C\to \RR^n$ such that $f^{-1}$ is Lipschitz decomposable and $K\subset f(C)$.
\end{lemma}

\begin{proof}
First we cover $K$ by a compact set $L$ obtained by a regular tree construction, meaning that each vertex at every level has the same number of offspring. Then we will construct a self-similar $C$ by an identical tree construction but with disjoint pieces repeating the same pattern, and $f\colon C\to L$ will be the natural surjection given by the identical self-map of our tree.

As $\adim  K<t$, we can choose positive integers $k_0,\ell_0$ such that $s:=\frac{\log_2 k_0}{\ell_0}$
satisfies 
$s\in(\adim K, t)$.
Let $v$ be a large enough positive integer that we will choose later and let $k=k_0^v$ and $\ell = \ell_0 v$, so $\frac{\log_2 k}{\ell}=\frac{\log_2 k_0}{\ell_0}=s$. 
Recall that dyadic cubes of level $m$ are cubes of the form 
$[a_1 2^{-m}, (a_1+1) 2^{-m}]\times\ldots\times[a_n 2^{-m}, (a_n+1) 2^{-m}]$, where $a_1,\ldots,a_n\in\Z$.
Since $\adim K<s$ and $k=2^{\ell s}$, for every dyadic cube $Q$ of any level $m$, the set $K\cap Q$ can be covered by at most $k$ dyadic cubes of level $m+\ell=m+\ell_0 v$ if we choose $v$ large enough. 
Since $s<t\le n$, by choosing $v$ large enough, we can also guarantee that $\ell$ is large enough to have $k=2^{\ell s}\le 2^{(\ell-1)n}$.

We may assume that $K\subset [0,1]^n$. Let $Q_{\emptyset}=[0,1]^n$ and if the dyadic cube $Q_{i_1\dots i_m}$ of level $m\ell$ is defined for some $m\geq 0$ then for $1\leq i\leq k$ let $Q_{i_1\dots i_m,i}\su Q_{i_1\dots i_m}$ be different dyadic cubes of level $(m+1)\ell$ such that
\begin{equation*}
K\cap Q_{i_1\dots i_m} \subset \bigcup_{i=1}^k Q_{i_1\dots i_m,i}.
\end{equation*} 
This way we obtain a `uniformly branching set' 
\begin{equation*}
L=\bigcap_{m=0}^{\infty} 
\left( \bigcup_{i_1=1}^k\dots  \bigcup_{i_m=1}^k Q_{i_1\dots i_m} \right)
\end{equation*} 
such that $K\subset L$.

Now we define a homogeneous self-similar set $C$ with the SSC and with the same structure as $L$ and a natural onto map $f\colon C\to L$ with the required properties.
Let $Q'_{\emptyset}=Q_{\emptyset}=[0,1]^n$. Since $k\le 2^{(\ell-1)n}$ we can choose $k$ disjoint (closed) dyadic cubes $Q'_1,\ldots,Q'_\ell\subset Q'_{\emptyset}$ of level $\ell$. 
By repeating the same pattern we obtain cubes $Q'_{i_1\ldots i_m}$ and we get a homogeneous self-similar set 
\begin{equation*} C=\bigcap_{m=0}^{\infty} 
\left( \bigcup_{i_1=1}^k\dots  \bigcup_{i_m=1}^k Q'_{i_1\dots i_m} \right)
\end{equation*} 
with the SSC such that $\dim C=s$.
Let $f\colon C\to L$ be the natural function such that
$f(C\cap Q'_{i_1\ldots i_m})=L\cap Q_{i_1\ldots i_m}$
for every positive integer $m$ and $i_1,\dots,i_m\in \{1,\dots,\ell\}$.
It is easy to see that $f$ is Lipschitz and $f(C)=L$. 

Finally, we need to prove that $f^{-1}$ is Lipschitz decomposable. One can easily check that in $\R^n$ the balls can be replaced by dyadic cubes in Definition~\ref{d:Ldec}. Since the preimage of any $m$th level dyadic cube $Q_{i_1\ldots i_m}$ is covered by at most $3^n$ many $m$th level dyadic cubes, this completes the proof of the lemma.
\end{proof}

Lemma~\ref{l:lipimage} and Lemma~\ref{l:KR}~\eqref{i:dec2} immediately imply the following.  

\begin{cor} \label{c:regcover}
If $K\su\R^n$ is a bounded set and $\adim K < t\leq n$ then there exist an $s\in(\adim K, t)$ and an $s$-regular compact set $L\su\R^n$ such that $K\subset L$.
\end{cor}

We also need the following lemma. 

\begin{lemma}\label{l:sscAssouad}
If $K\su\R^n$ is a bounded set with $\adim K<t<1$ then there exists a homogeneous self-similar set $S\su\R^n$ with the SSC such that $\adim S=t$ and $K$ is bilipschitz equivalent to a subset of $S$.
\end{lemma}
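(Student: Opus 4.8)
\textbf{Proof proposal for Lemma~\ref{l:sscAssouad}.}

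The plan is to start from the conclusion of Lemma~\ref{l:regcover}: since $\adim K<t<1$, we may pick $s\in(\adim K,t)$ and an $s$-regular compact set $L\su\R^n$ with $K\su L$. Thus it suffices to produce a homogeneous self-similar set $S\su\R^n$ with the SSC, with $\adim S=t$, that contains a subset bilipschitz equivalent to $L$ (and hence to its subset $K$). This reduces the problem to an embedding statement: every $s$-regular compact set, for $s<t<1$, is bilipschitz equivalent to a subset of a homogeneous SSC self-similar set of Assouad dimension exactly $t$.

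For the embedding, I would invoke (the dual of) the Mattila--Saaranen theorem already used in the proof of Theorem~\ref{t:mldim}: for $0<s'<s$ and any $s$-regular set $L\su\R^n$ there is an $s'$-dimensional homogeneous SSC self-similar set bilipschitz equivalent to a \emph{subset} of $L$ --- but here I need the reverse direction, embedding $L$ itself \emph{into} a self-similar set, so instead the natural tool is that an $s$-regular compact set is bilipschitz equivalent to a subset of any sufficiently ``thick'' self-similar sponge. Concretely, for a rational $s<\sigma<t$ write $\sigma=\frac{\log_2 k}{\ell}$ with integers $k,\ell$; one builds the homogeneous self-similar set $S$ in $\R^n$ with $m$ maps of ratio $2^{-\ell}$ (for suitable $m$ with $\log m/\ell=t$), arranged along a dyadic tree, so that the natural coding map realizes $L$ as a bilipschitz copy of a subtree. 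Since $L$ is $s$-regular with $s<\sigma$, at each dyadic scale $K\cap Q$ (indeed $L\cap Q$) is covered by at most $k=2^{\sigma\ell}$ cubes of the next level --- exactly as in the proof of Lemma~\ref{l:regcover}, the set $L$ sits inside a $k$-ary dyadic subtree, which embeds bilipschitzly into the first $k$ branches at each level of the $m$-ary self-similar set $S$ (this uses the SSC of $S$ to guarantee the coding map and its inverse are Lipschitz). The ratios being all equal to $2^{-\ell}$ gives $S$ homogeneous, and choosing $m$ with $\frac{\log m}{\log 2^{\ell}}=t$, i.e. the similarity dimension equal to $t$, gives $\adim S=t$ since for a homogeneous SSC self-similar set the Assouad dimension equals the similarity dimension.

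The main obstacle I anticipate is two-fold. First, arranging the $m$ similarity maps of ratio $2^{-\ell}$ in $\R^n$ so that the SSC holds \emph{and} the chosen $k$ of them form a subtree whose attractor receives a bilipschitz copy of $L$: one must place the $k$ ``active'' pieces in correspondence with the $k$-ary dyadic tree containing $L$ while keeping all $m$ pieces disjoint, which forces $m$ (hence $t$) not too small relative to $n$ --- but the hypothesis $t<1$ and $n\ge 1$, together with the freedom to take $\ell$ large, should make this routine (an $m$-ary ``middle-$\alpha$'' Cantor set on a line segment suffices when $t<1$). Second, verifying that the coding bijection between $L$ and its image in $S$ is genuinely bilipschitz: the $s$-regularity of $L$ controls diameters of $L$-cylinders from below by a uniform power of the contraction ratio, which matches the uniform geometry of the SSC self-similar set $S$ (whose cylinders at level $j$ have diameter $\asymp 2^{-\ell j}$ and are uniformly separated), so distances on both sides are comparable to $2^{-\ell\cdot(\text{length of common prefix})}$; making the constants uniform is where the bulk of the (routine) estimation lies. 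I would finish by recording that $\adim S=t$ exactly because $S$ is homogeneous with the SSC and its similarity dimension is $t$, completing the proof.
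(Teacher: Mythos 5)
Your first step coincides with the paper's: apply Lemma~\ref{l:regcover} to get $s\in(\adim K,t)$ and an $s$-regular compact $L\supseteq K$, reducing the lemma to embedding $L$ bilipschitzly into a homogeneous SSC self-similar set of dimension $t$. But your replacement for the second step has a genuine gap. The paper here simply invokes Mattila--Saaranen \cite[3.3~Theorem]{MS} (for $0<s<t<1$, every $s$-regular compact set is bilipschitz equivalent to a subset of every $t$-regular compact set), whereas you propose to build the embedding by hand as the coding map from the $k$-ary dyadic subtree containing $L$ into an $m$-ary SSC self-similar set $S$. That coding map is \emph{not} bilipschitz in general: if $x,y\in L$ lie in two \emph{adjacent} dyadic cubes of level $(j+1)\ell$ inside a common cube of level $j\ell$, then $|x-y|$ can be arbitrarily small, while their images lie in distinct level-$(j+1)$ cylinders of $S$ and hence, by the SSC, satisfy $|f(x)-f(y)|\ge c\,2^{-\ell(j+1)}$. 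So the forward map fails to be Lipschitz. Your claimed comparability ``distances on both sides are comparable to $2^{-\ell\cdot(\text{length of common prefix})}$'' is exactly what breaks down for $L$ under a dyadic coding: $s$-regularity bounds the \emph{number} of children at each node, but dyadic cubes give no lower bound on the \emph{separation} between distinct pieces of $L$ at a given level, and it is that separation (not the separation of the pieces of $S$, which you do address) that the Lipschitz estimate for $f$ requires.

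The missing ingredient is precisely where the hypothesis $t<1$ (hence $s<1$) does real work: an $s$-regular set with $s<1$ is uniformly disconnected, so one can replace the rigid dyadic decomposition of $L$ by a hierarchical decomposition into pieces that are well separated at their own scale (i.e.\ with $\dist$ between distinct pieces comparable to their diameters), and only for such an adapted decomposition does the coding map into $S$ become bilipschitz. Producing that decomposition with uniform constants is the substance of the Mattila--Saaranen theorem; it is not a routine estimation on top of your tree picture. As written, your argument would ``prove'' that any compact set covered by a controlled dyadic tree embeds into an ultrametric-type set, which is false (the paper itself recalls, via \cite{Mo}, that e.g.\ the set $\{0,1,1/2,1/3,\dots\}$ admits no bilipschitz embedding into an ultrametric space, even though it has Assouad dimension $1/2$ and sits in a very sparse dyadic tree). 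Either cite \cite[3.3~Theorem]{MS} as the paper does, or insert the uniform-disconnectedness step explicitly before running the coding argument.
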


\begin{proof}
By Corollary~\ref{c:regcover} there exist an $s\in(\adim K, t)$ and an $s$-regular compact $L\su\R^n$ such that $K\su L$. 
Let $S\su\R^n$ be a homogeneous self-similar set with the SSC having dimension $t$.
By a theorem of Mattila and Saaranen \cite[3.3.~Theorem]{MS} if $E,F\su\R^n$ are compact sets such that $E$ is $s$-regular, $F$ is $t$-regular such that $0<s<t<1$, then $E$ is bilipschitz equivalent to a subset of $F$. 
Applying this to $E=L$ and $F=S$ 
completes the proof.
\end{proof}

\begin{theorem}\label{t:adim}
For a fixed positive integer $n$ let $\iF$ be any family of subsets of $\R^n$ that contains all compact subsets of $\R^n$.
Let $D\colon \iF\to[0,n]$ be such that
\begin{itemize}
\item[$(\star)$] \parbox{\linewidth}{$D$ is bilipschitz invariant on the compact sets, monotone, and it agrees with the similarity dimension for any 
homogeneous self-similar set with the SSC.}
\end{itemize}
Then for every bounded set $K\in\iF$ with $\adim K<1$ we have 
\begin{equation}\label{e:DatmostdimA} 
D(K)\le \adim K.
\end{equation}
In particular, if $n=1$ then \eqref{e:DatmostdimA} holds for every bounded set $K\in\iF$.
\end{theorem}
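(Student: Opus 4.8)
The plan is to reduce the general statement to the self-similar comparison already packaged in Lemma~\ref{l:sscAssouad}. First I would fix a bounded set $K\in\iF$ with $\adim K<1$, and assume toward a contradiction that $\adim K<D(K)$. Then I would pick a real number $t$ with $\adim K<t<\min\{D(K),1\}$; note $t<1$ is available precisely because $\adim K<1$. By Lemma~\ref{l:sscAssouad} there is a homogeneous self-similar set $S\su\R^n$ with the SSC such that $\adim S=t$ and $K$ is bilipschitz equivalent to a subset $S'\su S$. The point is that $S'$ is compact (a bilipschitz image of the bounded closed set $K$, or one simply applies the lemma to $\overline K$), so $D$ applies to $S'$ in its bilipschitz-invariant regime.

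Next I would run the same chain of (in)equalities as in the proof of Theorem~\ref{t:mldim}: since $S$ is a homogeneous SSC self-similar set, its similarity dimension equals its Assouad dimension $t$ (for homogeneous SSC self-similar sets all the standard dimensions coincide), so property (*) gives $D(S)=\dim S=t$. Bilipschitz invariance of $D$ on compact sets gives $D(S')=D(K)$, and monotonicity gives $D(S')\le D(S)$. Chaining these,
\begin{equation*}
D(K)=D(S')\le D(S)=t<D(K),
\end{equation*}
a contradiction. Hence $D(K)\le \adim K$, which is \eqref{e:DatmostdimA}.

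For the final sentence, when $n=1$ every bounded set $K\su\R$ has $\adim K\le 1$; if $\adim K<1$ the above applies directly, and if $\adim K=1$ then $D(K)\le n=1=\adim K$ holds trivially since $D$ takes values in $[0,n]=[0,1]$. So \eqref{e:DatmostdimA} holds for all bounded $K\in\iF$ when $n=1$.

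The only real subtlety — and the step I would be most careful about — is making sure the bilipschitz-equivalent subset $S'$ of $S$ is genuinely in the compact regime where (*) forces $D(S')=D(K)$: Lemma~\ref{l:sscAssouad} as stated produces $S'$ bilipschitz equivalent to $K$, but we need a compact witness, so I would either observe that bounded $K$ can be replaced by $\overline K$ throughout (note $\adim \overline K=\adim K$ for bounded $K$, and $D(\overline K)\ge D(K)$ by monotonicity, so a bound on $D(\overline K)$ suffices) or invoke that the bilipschitz image of a compact set is compact after first passing to $\overline K$. Everything else is a verbatim repeat of the argument in Theorem~\ref{t:mldim}, just using the covering direction of the Mattila--Saaranen machinery (through Lemmas~\ref{l:regcover} and~\ref{l:sscAssouad}) instead of the subset direction.
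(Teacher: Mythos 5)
Your proposal is correct and follows essentially the same route as the paper: contradiction via a $t$ with $\adim K<t<\min\{D(K),1\}$, Lemma~\ref{l:sscAssouad} to produce the homogeneous SSC self-similar set $S$ of dimension $t$ containing a bilipschitz copy $S'$ of $K$, and the chain $t=D(S)\ge D(S')=D(K)>t$. The compactness subtlety you flag is handled in the paper exactly as you suggest, by first replacing $K$ with its closure (which does not increase $\adim$ and does not decrease $D$ by monotonicity), and your explicit treatment of the $\adim K=1$ case for $n=1$ is a harmless addition.
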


\begin{proof}
Assume to the contrary that there exists a bounded $K\in\iF$ such that $\adim K < \min\{D(K),1\}$. 
As replacing $K$ by its closure cannot increase $\adim K$ and decrease $\min\{D(K),1\}$, we may suppose that $K$ is compact.
Choose $t$ such that  $\adim K < t < \min(D(K),1)$.
Lemma~\ref{l:sscAssouad} yields a $t$-dimensional homogeneous self-similar set $S\su\R^n$ with the SSC and a set $S'\subset S$ which is bilipschitz equivalent to $K$.
Then $S$ and $S'$ are clearly compact, so $S, S'\in\iF$ and by $(\star)$ we obtain \begin{equation*} t=\dim(S)=D(S)\ge D(S') = D(K)>t,
\end{equation*} 
which is a contradiction. 
\end{proof}

We saw in Example~\ref{ex:Assouad} that \eqref{e:DatmostdimA} of Theorem~\ref{t:adim} does not hold for all compact sets when $n>1$. There are two ways to get rid of this obstacle: 
\begin{enumerate}[(A)]
\item \label{i:plan1} we prescribe $D$ on a larger family of sets;
\item \label{i:plan2} we require the invariance of $D$ for a larger family of maps.
\end{enumerate} 

First we consider plan \eqref{i:plan1}. It is well known~\cite{F} that for any compact $s$-regular set $K$ we have $\ldim K=\mldim K=\hdim K=\ubdim K=\adim K=s$, so it is a natural expectation about a reasonable dimension that it should be $s$ for compact $s$-regular sets. 
We show that with this stronger requirement Theorem~\ref{t:adim} also holds in higher dimensions and we do not even need bilipschitz invariance.

\begin{theorem}\label{t:adimregular}
For a fixed positive integer $n$ let $\iF$ be any family of subsets of $\R^n$ that contains all compact subsets of $\R^n$.
Let $D\colon \iF\to[0,n]$ be such that
\begin{itemize}
    \item[$(\star\star)$] $D$ is 
    monotone, and $D(K)=s$ for every compact $s$-regular set $K\su\R^n$.
\end{itemize}
Then for every bounded $K\in\iF$ we have \begin{equation*} D(K)\le \adim K.
\end{equation*}
\end{theorem}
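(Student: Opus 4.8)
\textbf{Proof plan for Theorem~\ref{t:adimregular}.}
The plan is to reduce to the $s$-regular case already handled by hypothesis (**) using Lemma~\ref{l:regcover}. First I would argue that we may assume $K$ is compact: replacing $K$ by its closure $\overline K$ does not change $\adim K$ (the Assouad dimension of a bounded set equals that of its closure), and by monotonicity $D(K)\le D(\overline K)$, so it suffices to prove the bound for $\overline K$. Now suppose for contradiction that $D(K)>\adim K$ for some bounded compact $K\in\iF$. Pick $t$ with $\adim K<t<D(K)$ (if $\adim K=n$ there is nothing to prove since $D$ maps into $[0,n]$, so we may assume $\adim K<n$ and such a $t$ with $t\le n$ exists).

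Next I would invoke Lemma~\ref{l:regcover}: since $K\su\R^n$ is bounded and $\adim K<t$, there is an $s\in(\adim K,t)$ and an $s$-regular compact set $L\su\R^n$ with $K\su L$. Then $L$ is compact, so $L\in\iF$, and by (**) we have $D(L)=s$. By monotonicity of $D$ and $K\su L$ we get
\begin{equation*}
D(K)\le D(L)=s<t<D(K),
\end{equation*}
a contradiction. This completes the proof.

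The argument is essentially immediate once Lemma~\ref{l:regcover} is available, so there is no serious obstacle at this stage; the substantive work is all inside Lemma~\ref{l:regcover} (the dyadic-cube construction giving an $s$-regular superset), which has already been carried out in the excerpt. The only mild point to be careful about is the edge case $\adim K=n$, where one should simply note $D(K)\le n=\adim K$ directly. Note also that, in contrast to Theorem~\ref{t:adim}, no appeal to self-similar sets or to bilipschitz invariance is needed here — monotonicity together with the normalization on $s$-regular sets already pins down the upper bound — which is exactly the strengthening the theorem advertises.
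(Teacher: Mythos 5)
Your argument is correct and is essentially the paper's own proof: apply Lemma~\ref{l:regcover} to get an $s$-regular compact superset $L\supseteq K$ with $s\in(\adim K, D(K))$, then conclude $D(K)\le D(L)=s<D(K)$ by monotonicity and (**). The preliminary reduction to compact $K$ and the intermediate parameter $t$ are harmless but unnecessary, since Lemma~\ref{l:regcover} already applies to bounded sets and $L$ itself is the compact set fed into (**).
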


\begin{proof}
Assume to the contrary that $\adim K < D(K)$ for a bounded set $K\in\iF$. By Corollary~\ref{c:regcover} there exist $s\in(\adim K,D(K))$ and an $s$-regular compact set $L\su\R^n$ such that $K\su L$. Then by $(\star\star)$ we obtain $D(K)\leq  D(L)=s<D(K)$, which is a contradiction.
\end{proof}

Finally, we carry out plan \eqref{i:plan2}. 
The next theorem was suggested by Alex Rutar. Note that Lemma~\ref{l:KR}~\eqref{i:dec1} guarantees that the Assouad dimension satisfies property \eqref{i:A1} below, so it is the greatest dimension fulfilling the conditions of the following theorem. 
Note also that this property \eqref{i:A1} is stronger than bilipschitz invariance but weaker than Lipschitz stability.

\begin{theorem} \label{t:decomp}
For a fixed positive integer $n$ let $\iF$ be any family of subsets of $\R^n$ that contains all compact subsets of $\R^n$.
Let $D\colon \iF\to[0,n]$ be such that
\begin{enumerate}[(1)] 
\item \label{i:A1}
$D(f(K))\leq D(K)$ for any compact set $K$ and Lipschitz map $f\colon K\to \RR^n$ for which $f^{-1}$ is Lipschitz decomposable, 
\item \label{i:A2} $D$ is monotone, 
\item \label{i:A3} $D$ agrees with the similarity dimension for every homogeneous self-similar set with the SSC.
\end{enumerate}
Then for every bounded $K\in\iF$ we have \begin{equation*} D(K)\le \adim K.
\end{equation*}
\end{theorem}

\begin{proof}
Assume to the contrary that $\adim K < D(K)$ for a bounded set $K\in\iF$. By Lemma~\ref{l:lipimage} there exist a homogeneous self-similar set $C\subset \RR^n$ with the SSC and $\dim C\in (\adim K, D(K))$, and a Lipschitz map $f\colon C\to \RR^n$ such that $f^{-1}$ is Lipschitz decomposable and $K\subset f(C)$. Applying \eqref{i:A2}, \eqref{i:A1}, and \eqref{i:A3} in this order implies $D(K)\leq  D(f(C))\leq D(C)=\dim C<D(K)$, which is a contradiction.
\end{proof}

\subsection*{Acknowledgments} We are indebted to Amlan Banaji, Jonathan M.~Fraser, and Alex Rutar for some illuminating conversations and helpful suggestions. In particular, Alex Rutar suggested Theorem~\ref{t:decomp}. We also thank the anonymous referees for carefully reading the manuscript; their questions encouraged us to find Theorem~\ref{t:lpacking} and to include Theorem~\ref{t:decomp}.

\end{document}